\numberwithin{equation}{section}
\numberwithin{figure}{section}
\theoremstyle{plain}
\newtheorem{thm}{\protect\theoremname}
  \theoremstyle{plain}
  \newtheorem{fact}[thm]{\protect\factname}
  \theoremstyle{remark}
  \newtheorem{rem}[thm]{\protect\remarkname}
  \theoremstyle{definition}
  \newtheorem{defn}[thm]{\protect\definitionname}
  \theoremstyle{plain}
  \newtheorem{cor}[thm]{\protect\corollaryname}
  \theoremstyle{plain}
	\newtheorem{lem}[thm]{\protect\lemmaname}
  \theoremstyle{plain}
	\newtheorem{prop}[thm]{\protect\propositionname}
  \theoremstyle{definition}
  \newtheorem{example}[thm]{\protect\examplename}
\newtheorem{obs}{Assumption}
\renewenvironment{fact}{\begin{obs}}{\end{obs}}
\theoremstyle{fact}
  \providecommand{\corollaryname}{Corollary}
  \providecommand{\definitionname}{Definition}
  \providecommand{\examplename}{Example}
	\providecommand{\lemmaname}{Lemma}
  \providecommand{\factname}{Fact}
  \providecommand{\propositionname}{Proposition}
  \providecommand{\remarkname}{Remark}
\providecommand{\theoremname}{Theorem}
\begin{document}

\title[Dynamical boundary conditions in non-cylindrical domains]{Dynamical boundary conditions in a non-cylindrical domain for the Laplace equation}

\author{Pedro T. P. Lopes}
\address{Instituto de Matem\'atica e Estat\'istica, Universidade de S\~ao Paulo,  Rua do Mat\~ao 1010, 05508-090, S\~ao Paulo, SP, Brazil}
\email{pplopes@ime.usp.br}
\thanks{Pedro T. P. Lopes was partially supported by FAPESP 2016/07016-8}

\author{Marcone C. Pereira}
\address{Instituto de Matem\'atica e Estat\'istica, Universidade de S\~ao Paulo,  Rua do Mat\~ao 1010, 05508-090, S\~ao Paulo, SP, Brazil}
\email{marcone@ime.usp.br}
\thanks{Marcone C. Pereira was partially supported by CNPq-Brazil 471210/2013-7 and FAPESP 2017/02630-2}


\subjclass[2010]{47A07, 47D06, 35K10, 35B40}

\date{\today}

\begin{abstract}

In this paper, we study existence, uniqueness and asymptotic behavior of the Laplace equation with dynamical boundary conditions on regular 
non-cylindrical domains. We write the problem as a non-autonomous Dirichlet-to-Neumann operator and use form methods in a more general framework to accomplish our goal. A class of non-autonomous elliptic problems with dynamical boundary conditions on Lipschitz domains is also considered in this same context.

\end{abstract}

\keywords{Evolution equations, form methods, linear semigroups, Dirichlet-to-Neumann
operator, dynamic boundary conditions, non-cylindrical domains.}

\maketitle
\tableofcontents{}

\section{Introduction}

In this paper, we consider the following problem:
\begin{equation} \label{eq:MainProblem}
\left\{
\begin{gathered}
\left(\lambda+\Delta\right)u\left(t,x\right) = 0,\,(t,x)\in D, t> t_{0},\\
\frac{\partial u}{\partial t}\left(t,x\right) = -\frac{\partial u}{\partial n}\left(t,x\right)+f\left(t,x\right),\,(t,x)\in S, t> t_{0}, \\
u\left(t_{0},x\right) = u_{0}\left(x\right),\,x\in\partial\Omega_{t_{0}},
\end{gathered}
\right.
\end{equation}
where $\lambda<0$ is a constant, $t_{0}\ge0$, $D\subset\mathbb{R}^{n+1}$ is an appropriate open set of real variables $(t,x)=(t,x_{1},...,x_{n})$ bounded by
a bounded domain $\Omega_{0}\subset\mathbb{R}^{n}$ at $t=0$ and
a $n$-dimensional surface $S$ on the half space $t>0$. We denote
by $\Omega_{\tau}$ and by $\partial\Omega_{\tau}$, $\tau>0$, the intersections
$D\cap\{\left(t,x\right)\in\mathbb{R}^{n+1};\,t=\tau\}$ and $S\cap\{\left(t,x\right)\in\mathbb{R}^{n+1};\,t=\tau\}$,
respectively. Therefore $D=\cup_{t>0}\Omega_{t}$ and $S=\cup_{t>0}\partial\Omega_{t}$.
The outward normal derivative at the point $x\in\partial\Omega_{t}$ is denoted by $\frac{\partial u}{\partial n}\left(t,x\right)$, and $n$ is the unit outward normal vector to the boundary $\partial \Omega_t$.

Our main goal is first to show the existence and uniqueness of the solutions of this Laplace equation with dynamical boundary conditions.
Notice that this task is not trivial since problem \eqref{eq:MainProblem} is posed in a non-cylindrical domain $D$. 
Furthermore, we study the asymptotic behavior of the solutions at infinite time, when $\Omega_{t}$ converges to a domain $\Omega$, and $f\left(t,.\right)$ converges to a function $f_{\infty}\left(.\right)$.
In this situation, we obtain that the solutions converge to the stationary problem 
\begin{equation}
\left\{
\begin{gathered}
\left(\lambda+\Delta\right)u_{\infty}\left(x\right)  =  0, \quad x\in\Omega\\
\frac{\partial u_{\infty}}{\partial n}\left(x\right)  =  f_{\infty}\left(x\right),\quad x\in\partial\Omega
\end{gathered}
\right. .
\nonumber
\end{equation}

We set the non-cylindrical domain $D$ by smooth perturbations of a fixed open set $\Omega$ which are defined by diffeomorphisms according, for instance, to D. Henry in \cite{Henryboundary}.
Performing a change of variable, we transform \eqref{eq:MainProblem} in a non-autonomous Dirichlet-to-Neumann operator posed in the cylindrical set $]t_0,\infty[ \times \Omega$, which leads us to consider non-autonomous elliptic equations with dynamical boundary conditions. We introduce these non-autonomous equations in the following way:

Let $P\left(t,x,D\right)$ be a second order elliptic
operator acting on the bounded domain $\Omega\subset\mathbb{R}^{n}$ given by
\begin{equation}\label{eq:P(t,x,D)}
\begin{array}{l}
\displaystyle P\left(t,x,D\right) := -\sum_{i,j=1}^{n}  \partial_{x_{i}} \left( a_{ij}\left(t,x\right)\partial_{x_{j}}u\left(x\right) \right) +\sum_{j=1}^{n}b_{j}\left(t,x\right)\partial_{x_{j}}u\left(x\right) \\[10pt] 
\qquad \qquad \qquad \qquad \displaystyle -\sum_{j=1}^{n}\partial_{x_{j}}\left(c_{j}\left(t,x\right)u\left(x\right)\right)+d\left(t,x\right)u\left(x\right).
\end{array}
\end{equation}
Suppose that for every suitable function $g$ defined on the boundary
$\partial\Omega$, there exists a unique solution to the Dirichlet problem:
$P\left(t,x,D\right)u=0$ and $\left.u\right|_{\partial\Omega}=g$.
In this case, the Dirichlet-to-Neumann operator, denoted here by $A\left(t\right)$,
is the operator that maps $g$ to the conormal derivative of
$u$ on $\partial\Omega$:
\begin{equation}
A\left(t\right)\left(g\right)=C\left(t,x,D\right)\left(u\right):=\sum_{i,j=1}^{n}a_{ij}\left(t,x\right)\nu_{i}\left(x\right)\partial_{x_{j}}u\left(x\right)+\sum_{j=1}^{n}c_{j}\left(t,x\right)u\left(x\right)\nu_{j}\left(x\right), \label{eq:dirichlettoneumann}
\end{equation}
 where $\nu(x) = (\nu_1(x),...,\nu_n(x))$ denotes the outer unit normal at $x\in \partial \Omega$.

Thereby, we introduce the 
non-autonomous problem defined on the boundary $\partial\Omega$:
\begin{equation} \label{eq:mainproblem}
\begin{array}{rcl}
\frac{du}{dt}\left(t\right)+A\left(t\right)u\left(t\right)&=&f\left(t\right), \; t\ge t_{0} \\
u\left(t_{0}\right)&=&u_{0}
\end{array}. 
\end{equation}

Here, we give simple conditions that guarantee that the above problem is well-posed. In order to do that on $H^{-\frac{1}{2}}(\partial\Omega)$, we use form methods and assume Hölder continuity with exponent $\alpha\in]0,1]$ in time of the forms that define the operators $\left\{ A\left(t\right)\right\} _{t\ge t_{0}}$. We are then able to show that the operators satisfy the Tanabe-Sobolevskii conditions. On $L^{2}(\partial\Omega)$, we assume Hölder continuity of the forms with an exponent $\alpha\in]\frac{1}{2},1]$ and prove that the operators satisfy the Yagi conditions. Our assumptions also allow us to study the asymptotic behavior at infinity of the Problem \eqref{eq:mainproblem}.
Consequently, the results concerned with \eqref{eq:MainProblem} are directly obtained as an application of the results obtained for \eqref{eq:mainproblem}.

Our approach follows W. Arendt and A. Elst \cite{Arendtelst,Arendtformmethods}, see also \cite{Arendtelstrough,Arendtetal}, where form methods were used to deal
with  the Dirichlet-to-Neumann problem defined by the Laplace operator.
Their technique has been also used for general second
order problems by J. Abreu and E. Capelato \cite{AbrelJamilErica}, and E. M. Ouhabaz \cite{ElstOuhabaz}. The novelty here
is to study non-autonomous problems in bounded domains combining form methods for the Dirichlet-to-Neumann problem with the Tanabe-Sobolevskii and the Yagi 
conditions \cite{Tanabe,sobolevskii,Yagibook}.

Notice that dynamical boundary conditions have been studied by many authors. Among them, we mention: J. Lions \cite{Lionsnonlinear}, J. Escher \cite{EscherDNContinuous,Eschernonlineardynamic}, J. Escher
and J. Seiler \cite{EscherSeiler}, T. Hintermann \cite{Hintermann},
A. Friedman and Shinbrot \cite{FriedmanShinbrot} and L. Vazquez and
E. Vitillaro \cite{Vazquezenzo}. Although there exists a big literature on the study of parabolic equations on non-cylindrical domains, among which we can mention the pioneering work of A. Friedman \cite{Friedmannoncylindrical}, as well S. Bonaccorsi and G. Guatteri \cite{BG}, the recent papers by Ma To Fu et al. \cite{MaToFu} and J. Calvo et al. \cite{Calvo}, we could not find any result for the Laplace equation with dynamical boundary conditions on non-cylindrical domains. Thus, we are fulfilling this gap with this work, besides our study of non-autonomous equations with dynamic boundary conditions. It is interesting to note that convergence of the Dirichlet-to-Neumann operators associated with the Laplacian on varying domains was considered by A.F.M. ter Elst and E.M. Ouhabaz \cite{ElstOuhabaz}, but they did not studied it as a change in time and their results are quite different from ours.

The organization of the paper is as follows: In the Section 2, we give a rigorous definition of the Dirichlet-to-Neumann operators on Lipschitz domains and we characterize them as bounded operators from $H^\frac{1}{2}(\partial\Omega)$ to $H^{-\frac{1}{2}}(\partial\Omega)$ using forms. Next, we recall the Tanabe-Sobolevskii conditions discussing how to recover them using form methods, to finally study existence,
uniqueness and asymptotic behavior to Problem (\ref{eq:mainproblem}) on $H^{-\frac{1}{2}}\left(\partial\Omega\right)$. Using the Yagi conditions, we are then able to extend the study to $L^{2}\left(\partial\Omega\right)$ functions. 
In Section 3, we apply the results of Section 2 to a non-autonomous elliptic equation with dynamic boundary conditions in bounded Lipschitz domains, and then, to the Laplace equation with dynamic boundary conditions on non-cylindrical domains.

\section{Non-autonomous Dirichlet-to-Neumann problem}

In this section, $\Omega\subset\mathbb{R}^{n}$ is a Lipschitz bounded
domain. We set some notations and recall
some facts, whose proofs can be found, for instance, in P. Grisvard
\cite{grisvard}. For $m\in\mathbb{Z}$, the Sobolev spaces are defined by:
\[
H^{m}\left(\Omega\right):=\left\{ \begin{array}{ccl}
u\in L^{2}\left(\Omega\right);& &\,\sum_{\left|\alpha\right|\le m}\left\Vert \partial_{x}^{\alpha}u\right\Vert _{L^{2}\left(\Omega\right)}^{2}<\infty,\,{\rm if }\, m\ge0\\
u\in\mathcal{D}'\left(\Omega\right);& &\,\exists u_{\alpha}\in L^{2}\left(\Omega\right),\,\left|\alpha\right|\le |m|,\, {\rm such}\, {\rm that} \, u=\sum_{\left|\alpha\right|\le |m|}\partial_{x}^{\alpha}u_{\alpha}, {\rm if }\, m<0 \quad 
\end{array}\right. .
\]
 The norms are denoted by $u\in H^{m}\left(\Omega\right)\mapsto\left\Vert u\right\Vert _{H^{m}\left(\Omega\right)}$.
The Sobolev space $H^{s}\left(\partial\Omega\right)$, $0<s<1$, is
defined as the space of all measurable functions $u:\partial\Omega\to\mathbb{C}$
such that
\[
\left\Vert u\right\Vert _{H^{s}\left(\partial\Omega\right)}^{2}:=\int_{\partial\Omega}\left|u\left(x\right)\right|^{2}d\sigma_{x}+\int_{\partial\Omega\times\partial\Omega}\frac{\left|u\left(x\right)-u\left(y\right)\right|^{2}}{\left\Vert x-y\right\Vert ^{n-1+2s}}d\sigma_{x}d\sigma_{y}<\infty,
\]
where $\sigma_{x}$ is the surface measure of $\partial\Omega$. We
denote by $C^{m}\left(\overline{\Omega}\right)$ the set of all functions
$u$ such that the derivatives $\partial^{\alpha}u$ exist for all
$\left|\alpha\right|\le m$ and are continuous functions up to the
boundary. The set of functions of class $C^{m}$ whose support is contained in $\Omega$ will be denoted by $C^{m}_{c}\left(\Omega\right)$. The trace operator $\gamma_{0}:H^{1}\left(\Omega\right)\to H^{\frac{1}{2}}\left(\partial\Omega\right)$
is the unique continuous extension of the function $C^{1}\left(\overline{\Omega}\right)\ni u\mapsto \left.u\right|_{\partial\Omega} \in C\left(\partial\Omega\right)$.
As usual we denote $H_{0}^{1}\left(\Omega\right):=\ker\left(\gamma_{0}\right)$. Note that there exists a continuous extension operator $\mathcal{E}:H^{\frac{1}{2}}\left(\partial\Omega\right)\to H^{1}\left(\Omega\right)$
such that $\gamma_{0}\circ \mathcal{E}$ is the identity. Denoting by $\left(.,.\right)_{L^{2}\left(\Omega\right)}$
and by $\left(.,.\right)_{L^{2}\left(\partial\Omega\right)}$ the usual
scalar products of $L^{2}\left(\Omega\right)$ and $L^{2}\left(\partial\Omega\right)$,
respectively, it is well known that $\left(.,.\right)_{L^{2}\left(\Omega\right)}: C^{1}\left(\overline{\Omega}\right)\times C^{1}_{c}\left(\Omega\right) \to\mathbb{C}$
extends uniquely to a sesquilinear form $\left\langle .,.\right\rangle _{H^{-1}\left(\Omega\right)\times H_{0}^{1}\left(\Omega\right)}:H^{-1}\left(\Omega\right)\times H_{0}^{1}\left(\Omega\right)\to\mathbb{C}$
that allows the identification of $H^{-1}\left(\Omega\right)$ with
the set of all continuous anti-linear functionals of $H_{0}^{1}\left(\Omega\right)$.
The anti-dual space of $H^{1}\left(\Omega\right)$ is denoted by $H^{1}\left(\Omega\right)^{*}$
and the dual product of $u\in H^{1}\left(\Omega\right)^{*}$ and $v\in H^{1}\left(\Omega\right)$
is denoted by $\left\langle u,v\right\rangle _{H^{1}\left(\Omega\right)^{*}\times H^{1}\left(\Omega\right)}$.
Similarly we define $H^{-\frac{1}{2}}\left(\partial\Omega\right)$
as the anti-dual space of $H^{\frac{1}{2}}\left(\partial\Omega\right)$
and denote the dual product of $u\in H^{-\frac{1}{2}}\left(\partial\Omega\right)$
and $v\in H^{\frac{1}{2}}\left(\partial\Omega\right)$ as $\left\langle u,v\right\rangle _{H^{-\frac{1}{2}}\left(\partial\Omega\right)\times H^{\frac{1}{2}}\left(\partial\Omega\right)}$.
As the inclusion $H^{\frac{1}{2}}\left(\partial\Omega\right)\hookrightarrow L^{2}\left(\partial\Omega\right)$
is injective with dense range, we can define an injective map with dense range
$L^{2}\left(\partial\Omega\right)\hookrightarrow H^{-\frac{1}{2}}\left(\partial\Omega\right)$
as $\left\langle u,v\right\rangle _{H^{-\frac{1}{2}}\left(\partial\Omega\right)\times H^{\frac{1}{2}}\left(\partial\Omega\right)}:=\left(u,v\right)_{L^{2}\left(\partial\Omega\right)}$,
when $u\in L^{2}\left(\partial\Omega\right)$ and $v\in H^{\frac{1}{2}}\left(\partial\Omega\right)$.

If $E$ and $F$ are Banach spaces, $\mathcal{B}\left(E,F\right)$ is
the set of all continuous maps from $E$ to $F$ and $\mathcal{B}\left(E\right):=\mathcal{B}\left(E,E\right)$.
The space $C_{u}^{\alpha}\left(\left[t_{0},\infty\right[,E\right)$
is the space of all uniformly $\alpha$-Hölder continuous functions from $\left[t_{0},\infty\right[$
to $E$, that is, if $f\in C_{u}^{\alpha}\left(\left[t_{0},\infty\right[,E\right)$,
then there is $C>0$ such that 
\[
\left\Vert f\left(t\right)-f\left(s\right)\right\Vert _{E}\le C\left|t-s\right|^{\alpha},\,\forall t,s\ge t_{0}.
\]
Similarly $C_{u}^{1,\alpha}\left(\left[t_{0},\infty\right[,E\right)$ is the set of all $C^1$ functions $f$ such that $f$ and $\frac{df}{dt}$ belong to $C_{u}^{\alpha}\left(\left[t_{0},\infty\right[,E\right)$.

In order to study the Dirichlet-to-Neumann problem, we consider the
following forms $a_{t}:H^{1}\left(\Omega\right)\times H^{1}\left(\Omega\right)\to\mathbb{C}$,
$t\in\left[t_{0},\infty\right]$:
\begin{equation}\label{eq:forms}
\begin{array}{rcl}
a_{t}\left(u,v\right) &=& \int_{\Omega}\left(\sum_{i,j=1}^{n}a_{ij}\left(t,x\right)\partial_{x_{j}}u\left(x\right)\overline{\partial_{x_{i}}v\left(x\right)}+\sum_{j=1}^{n}b_{j}\left(t,x\right)\partial_{x_{j}}u\left(x\right)\overline{v\left(x\right)}\right .\\
&& +
\left .\sum_{j=1}^{n}c_{j}\left(t,x\right)u\left(x\right)\overline{\partial_{x_{j}}v\left(x\right)}+d\left(t,x\right)u\left(x\right)\overline{v\left(x\right)}\right)dx.
\end{array}
\end{equation}
These are the forms associated with the differential operators $P\left(t,x,D\right)$
defined in \eqref{eq:P(t,x,D)}. Below are the basic assumptions we shall use in this paper.

\begin{fact}
\label{fact:Assumptionsoftheform} The coefficients of the forms $\left\{ a_{t}\right\} _{t\in\left[t_{0},\infty\right]}$ satisfy the \textbf{assumptions for $H^{-\frac{1}{2}}(\partial\Omega)$} if:

1) There is an $\alpha\in]0,1]$ such that $a_{ij},\,b_{j},\,c_{j},\hbox{ and }d\in C_{u}^{\alpha}\left(\left[t_{0},\infty\right[,L^{\infty}\left(\Omega\right)\right)$, for all $i$, $j$.

2) $a_{ij}\left(\infty,.\right),\,b_{j}\left(\infty,.\right),\,c_{j}\left(\infty,.\right)\hbox{ and }d\left(\infty,.\right)\in L^{\infty}\left(\Omega\right)$.

3) There is a constant $C>0$ such that
\begin{equation}
Re\left(a_{t}\left(u,u\right)\right)\ge C\left\Vert u\right\Vert _{H^{1}\left(\Omega\right)}^{2},\,\forall t\in\left[t_{0},\infty\right],\,\forall u\in H^{1}\left(\Omega\right).\label{eq:Rea(u,u)geu}
\end{equation}

4) $\lim_{t\to\infty}a_{ij}\left(t,.\right)=a_{ij}\left(\infty,.\right)$, $\lim_{t\to\infty}b_{j}\left(t,.\right)=b_{j}\left(\infty,.\right)$, $\lim_{t\to\infty}c_{j}\left(t,.\right)=c_{j}\left(\infty,.\right)$ and $\lim_{t\to\infty}d\left(t,.\right)=d\left(\infty,.\right)$ in $L^{\infty}\left(\Omega\right)$, for all $i$, $j$.
\end{fact}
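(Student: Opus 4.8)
The plan begins with recognizing what kind of statement this is. The \texttt{fact} environment has been redefined in the preamble onto the \texttt{obs} environment, so the block above does not display as a ``Fact'' at all but as an \emph{Assumption}; logically it is a \emph{definition}. It introduces the phrase ``the coefficients satisfy the assumptions for $H^{-\frac12}(\partial\Omega)$'' as an abbreviation for the conjunction of the four listed conditions. A definition has no content to be deduced from earlier hypotheses---the four items \emph{are} the definiens, not an assertion derived from anything---so there is nothing to prove. I will therefore not fabricate an argument; the correct response is that the statement requires no proof.

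What I would instead record, to orient the reader and justify singling out exactly these four clauses, is the role each is destined to play later. Condition (1), uniform $\alpha$-Hölder continuity in time of the coefficients with values in $L^\infty(\Omega)$, is what will transfer to Hölder continuity in $t$ of the forms $a_t$, and hence of the Dirichlet-to-Neumann operators $A(t)$; this is precisely the regularity used to verify the Tanabe--Sobolevskii conditions (and, for the stronger exponent, the Yagi conditions), which in turn yield well-posedness of the non-autonomous problem \eqref{eq:mainproblem}. Condition (3), the uniform coercivity $Re\, a_t(u,u)\ge C\|u\|_{H^1(\Omega)}^2$ on $H^1(\Omega)$, is the Lax--Milgram ingredient guaranteeing unique solvability of each Dirichlet problem $P(t,x,D)u=0$, $u|_{\partial\Omega}=g$, and thus the well-definedness and sectoriality of $A(t)$ uniformly in $t$. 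Conditions (2) and (4), the existence and $L^\infty$-convergence of the limit coefficients $a_{ij}(\infty,\cdot),\dots,d(\infty,\cdot)$, make the limiting form $a_\infty$ available and ensure $a_t\to a_\infty$ as $t\to\infty$, which is what will drive the convergence of solutions to the stationary problem.

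Accordingly, the only genuine ``obstacle'' here is presentational rather than mathematical: one must make explicit (as the renamed environment already does) that this is a standing hypothesis and not a proposition, so that the reader does not expect a derivation. Any actual verification---checking that a concretely given family $\{a_t\}$ meets clauses (1)--(4)---is a separate task belonging to the later sections, where such a family is produced; only at that point do the four clauses become statements that carry a proof obligation, and I would not attempt to discharge that obligation here.
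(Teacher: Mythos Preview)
Your analysis is correct and matches the paper exactly: the block is an \emph{Assumption} (the \texttt{fact} environment is redefined to \texttt{obs}), it carries no proof in the paper, and none is required because it is a definition of standing hypotheses rather than a derivable claim. Your additional commentary on the role of each clause is accurate and helpful, though the paper itself offers no such gloss at this point.
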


\begin{fact}
\label{fact:Assumptionsoftheform2} The coefficients of the forms $\left\{ a_{t}\right\} _{t\in\left[t_{0},\infty\right]}$ satisfy the \textbf{assumptions for $L^{2}(\partial\Omega)$} if they satisfy all conditions of Assumption \ref{fact:Assumptionsoftheform} for $\alpha\in\left]\frac{1}{2},1\right]$.
\end{fact}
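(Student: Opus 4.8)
The final statement is an \emph{assumption} (a named hypothesis set), not a proposition, so there is nothing to prove in the usual sense: Assumption~\ref{fact:Assumptionsoftheform2} simply \emph{declares} that the ``$L^{2}(\partial\Omega)$ assumptions'' consist of the four conditions of Assumption~\ref{fact:Assumptionsoftheform} with the H\"older exponent restricted from $\alpha\in\left]0,1\right]$ to the narrower range $\alpha\in\left]\frac{1}{2},1\right]$. The one genuinely verifiable content is the implication that any family $\{a_{t}\}$ satisfying the $L^{2}$ assumptions automatically satisfies the $H^{-\frac{1}{2}}$ assumptions; this is immediate from the inclusion $\left]\frac{1}{2},1\right]\subset\left]0,1\right]$, since conditions 2)--4) are unchanged and condition 1) is only strengthened.

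What does merit a forward-looking comment is \emph{why} the exponent is sharpened rather than \emph{how} the statement is established. The weaker range $\alpha\in\left]0,1\right]$ is precisely what the Tanabe--Sobolevskii theory requires to build the non-autonomous evolution family on $H^{-\frac{1}{2}}(\partial\Omega)$, where the Dirichlet-to-Neumann operators $A(t)$ are naturally realized through the forms $a_{t}$. To transfer the analysis to $L^{2}(\partial\Omega)$ one instead invokes the Yagi conditions, which demand more time-regularity of $t\mapsto A(t)$ relative to the domain of its fractional powers; heuristically, moving up from $H^{-\frac{1}{2}}$ to $L^{2}$ costs half a derivative, and it is exactly this half-derivative that pins the threshold at $\alpha=\frac{1}{2}$.

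Consequently, the plan, \emph{once this assumption is adopted}, is not to prove the assumption but to use it: first establish uniform coercivity and sectoriality of the $A(t)$ from condition 3), then transfer the $\alpha$-H\"older dependence of the coefficients $a_{ij},b_{j},c_{j},d$ to $\alpha$-H\"older dependence of the associated operators, and finally check the Yagi hypotheses in $L^{2}(\partial\Omega)$. The only real obstacle lies in that last verification --- controlling the time-derivative of $A(t)$ in the correct interpolation space with the half-derivative loss that makes $\alpha=\frac{1}{2}$ the critical value --- and not in the present definitional statement, which carries no independent difficulty.
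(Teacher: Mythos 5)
You are right that this is a definition, not a theorem: the paper gives no proof because Assumption~\ref{fact:Assumptionsoftheform2} merely names the conditions of Assumption~\ref{fact:Assumptionsoftheform} restricted to $\alpha\in\left]\frac{1}{2},1\right]$, and your observation that the $L^{2}$ assumptions imply the $H^{-\frac{1}{2}}$ ones via $\left]\frac{1}{2},1\right]\subset\left]0,1\right]$ is exactly the intended content. Your forward-looking remarks about why the threshold $\alpha=\frac{1}{2}$ arises from the Yagi conditions also match how the paper actually uses this assumption in Sections~\ref{sec:2.1.3} and~\ref{subsec:dtnl2}.
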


We will always assume that, at least, the Assumption 1 holds. The stronger Assumption 2 will only be necessary when we deal with the problem on $L^2(\partial\Omega)$, as it will be the case in Sections \ref{sec:2.1.4} and \ref{sec:3.2}. 

\begin{rem}\label{rem:M}
Conditions 1 and 4 of the Assumption 1 imply that the coefficients are bounded. Hence there is a
constant $M>0$ such that
\[
\left|a_{t}\left(u,v\right)\right|\le M\left\Vert u\right\Vert _{H^{1}\left(\Omega\right)}\left\Vert v\right\Vert _{H^{1}\left(\Omega\right)},\,\forall t\in\left[t_{0},\infty\right],\,u,v\in H^{1}\left(\Omega\right).
\]

The above conditions together with the Lax-Milgram Theorem can be used to define two important operators: $\mathcal{B}_{t,D}:H_{0}^{1}\left(\Omega\right)\to H^{-1}\left(\Omega\right)$
and $\mathcal{B}_{t,N}:H^{1}\left(\Omega\right)\to H^{1}\left(\Omega\right)^{*}$. They are the unique isomorphisms that satisfy
$$
a_{t}\left(u,v\right)=\left\langle \mathcal{B}_{t,D}\left(u\right),v\right\rangle _{H^{-1}\left(\Omega\right)\times H_{0}^{1}\left(\Omega\right),}
\quad \textrm{for all } u,v\in H_{0}^{1}\left(\Omega\right),
$$ 
and 
$$
a_{t}\left(u,v\right)=\left\langle \mathcal{B}_{t,N}\left(u\right),v\right\rangle _{H^{1}\left(\Omega\right)^{*}\times H^{1}\left(\Omega\right),}
\quad \textrm{for all } u,v\in H^{1}\left(\Omega\right). 
$$  
It is easy to see that $\mathcal{B}_{t,D}$ is equal to the operator $P\left(t,x,D\right)$ acting on $H_{0}^{1}\left(\Omega\right)$ in the sense of distributions and that, for all $t\in\left[t_{0},\infty\right]$:
\[
\left\Vert \mathcal{B}_{t,D}\right\Vert _{\mathcal{B}\left(H_{0}^{1}\left(\Omega\right),H^{-1}\left(\Omega\right)\right)}\le M\,,\left\Vert \mathcal{B}_{t,N}\right\Vert _{\mathcal{B}\left(H^{1}\left(\Omega\right),H^{1}\left(\Omega\right)^{*}\right)}\le M
\]
\[
\left\Vert \mathcal{B}_{t,D}^{-1}\right\Vert _{\mathcal{B}\left(H^{-1}\left(\Omega\right),H_{0}^{1}\left(\Omega\right)\right)}\le\frac{1}{C}\,\text{ and }\left\Vert \mathcal{B}_{t,N}^{-1}\right\Vert _{\mathcal{B}\left(H^{1}\left(\Omega\right)^{*},H^{1}\left(\Omega\right)\right)}\le\frac{1}{C}.
\]
\end{rem}

\begin{defn}
\label{def:conormal}
Let $u\in H^{1}\left(\Omega\right)$. We say that $C\left(t,x,D\right)u$
exists in the $H^{-\frac{1}{2}}\left(\partial\Omega\right)$-weak
sense and it is equal to $y\in H^{-\frac{1}{2}}\left(\partial\Omega\right)$
if 
\[
a_{t}\left(u,v\right)=\left\langle y,\gamma_{0}\left(v\right)\right\rangle _{H^{-\frac{1}{2}}\left(\partial\Omega\right)\times H^{\frac{1}{2}}\left(\partial\Omega\right)},\,\forall v\in H^{1}\left(\Omega\right).
\]
 If $y\in L^{2}\left(\partial\Omega\right)$, then we say that $C\left(t,x,D\right)u$
exists in the $L^{2}\left(\partial\Omega\right)$-weak sense.
\end{defn}

By the divergence theorem, the above definition coincides with the
usual conormal derivative if $P(t,x,D)u=0$ and if we impose sufficiently regularity to $u$ and to the coefficients. Our definition of weak conormal derivative can be found in a similar way in \cite{Arendtelst,AbrelJamilErica,ElstOuhabaz}.

If $u\in H^{1}\left(\Omega\right)$ is such that $C\left(t,x,D\right)u$
exists in the $H^{-\frac{1}{2}}\left(\partial\Omega\right)$-weak
sense, then considering $v\in C_{c}^{\infty}(\Omega)$ in the Definition
\ref{def:conormal}, we conclude that $P\left(t,x,D\right)u=0$.
On the other hand, we have:

\begin{prop}
Let $u\in H^{1}\left(\Omega\right)$ be such that $P\left(t,x,D\right)u=0$.
Then $C\left(t,x,D\right)u$ exists in the $H^{-\frac{1}{2}}\left(\partial\Omega\right)$-weak
sense. Moreover, there exists a constant $C>0$, which does not depend on $t\in[t_0,\infty[$, such that
\[
\left\Vert C\left(t,x,D\right)u\right\Vert _{H^{-\frac{1}{2}}\left(\partial\Omega\right)}\le C\left\Vert u\right\Vert _{H^{1}\left(\Omega\right)},
\]
whenever $t\in\left[t_{0},\infty\right]$ and $u\in H^{1}\left(\Omega\right)$ is such that $P\left(t,x,D\right)u=0$.
\end{prop}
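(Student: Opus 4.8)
The plan is to construct the functional $y$ explicitly by composing the form $a_t(u,\cdot)$ with a fixed right inverse of the trace operator, exploiting that the hypothesis $P(t,x,D)u=0$ forces $a_t(u,\cdot)$ to annihilate $H_0^1(\Omega)$. First I would record the consequence of the hypothesis: by the identification in Remark \ref{rem:M}, $\mathcal{B}_{t,D}(u)=P(t,x,D)u$ in the sense of distributions and $a_t(u,w)=\left\langle \mathcal{B}_{t,D}(u),w\right\rangle_{H^{-1}(\Omega)\times H_0^1(\Omega)}$ for all $w\in H_0^1(\Omega)$. Hence $P(t,x,D)u=0$ gives immediately
\[
a_t(u,w)=0,\quad\forall w\in H_0^1(\Omega)=\ker(\gamma_0).
\]

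Second, using the fixed continuous extension operator $\mathcal{E}:H^{\frac{1}{2}}(\partial\Omega)\to H^1(\Omega)$ with $\gamma_0\circ\mathcal{E}=\mathrm{id}$, I define a candidate $y$ by
\[
\left\langle y,g\right\rangle_{H^{-\frac{1}{2}}(\partial\Omega)\times H^{\frac{1}{2}}(\partial\Omega)}:=a_t(u,\mathcal{E}g),\quad g\in H^{\frac{1}{2}}(\partial\Omega).
\]
Linearity is clear, and the continuity estimate of Remark \ref{rem:M} yields
\[
\left|\left\langle y,g\right\rangle\right|\le M\left\Vert u\right\Vert_{H^1(\Omega)}\left\Vert \mathcal{E}g\right\Vert_{H^1(\Omega)}\le M\left\Vert \mathcal{E}\right\Vert\,\left\Vert u\right\Vert_{H^1(\Omega)}\left\Vert g\right\Vert_{H^{\frac{1}{2}}(\partial\Omega)},
\]
so that $y\in H^{-\frac{1}{2}}(\partial\Omega)$ with $\left\Vert y\right\Vert_{H^{-\frac{1}{2}}(\partial\Omega)}\le M\left\Vert \mathcal{E}\right\Vert\left\Vert u\right\Vert_{H^1(\Omega)}$, where $\left\Vert \mathcal{E}\right\Vert$ is the operator norm of $\mathcal{E}$. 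Since the bound $M$ is uniform in $t$ by Remark \ref{rem:M} and $\mathcal{E}$ does not depend on $t$, the constant $C=M\left\Vert \mathcal{E}\right\Vert$ is independent of $t$, which is precisely the uniformity asserted in the statement.

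Third, I would verify that this $y$ satisfies the defining identity of Definition \ref{def:conormal}. Given any $v\in H^1(\Omega)$, set $g=\gamma_0(v)$. Then $\gamma_0(v-\mathcal{E}g)=\gamma_0(v)-g=0$, so $v-\mathcal{E}g\in H_0^1(\Omega)$, and by the first step $a_t(u,v-\mathcal{E}g)=0$. Therefore
\[
a_t(u,v)=a_t(u,\mathcal{E}g)=\left\langle y,\gamma_0(v)\right\rangle_{H^{-\frac{1}{2}}(\partial\Omega)\times H^{\frac{1}{2}}(\partial\Omega)},
\]
which is exactly the condition for $C(t,x,D)u$ to exist in the $H^{-\frac{1}{2}}(\partial\Omega)$-weak sense and equal $y$.

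At its core the argument is merely a consequence of the surjectivity of $\gamma_0$ (guaranteed by the existence of $\mathcal{E}$) together with the annihilation property $a_t(u,\cdot)|_{H_0^1(\Omega)}=0$; well-definedness independent of the extension is automatic since any two extensions of $g$ differ by an element of $\ker(\gamma_0)$. The only point demanding genuine care is the uniformity of $C$ in $t$, but this is handed to us directly by the uniform bound $M$ from Remark \ref{rem:M} and the $t$-independence of $\mathcal{E}$, so I expect no real obstacle.
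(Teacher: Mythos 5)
Your proof is correct and follows essentially the same route as the paper: define $y$ by $\left\langle y,g\right\rangle :=a_{t}\left(u,\mathcal{E}g\right)$, use the fact that $P\left(t,x,D\right)u=0$ forces $a_{t}\left(u,\cdot\right)$ to vanish on $H_{0}^{1}\left(\Omega\right)$ to pass from $\mathcal{E}\left(\gamma_{0}\left(v\right)\right)$ to an arbitrary $v$, and extract the $t$-uniform bound $M\left\Vert \mathcal{E}\right\Vert$ from Remark \ref{rem:M}. The only cosmetic difference is that the paper phrases the $H_{0}^{1}$-annihilation step as independence of the chosen extension of $z$ (justified via density of $C_{c}^{\infty}\left(\Omega\right)$ in $H_{0}^{1}\left(\Omega\right)$), while you state it up front as $a_{t}\left(u,\cdot\right)\vert_{H_{0}^{1}\left(\Omega\right)}=0$; these are the same observation.
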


\begin{proof}
Let $u\in H^{1}\left(\Omega\right)$ be such that $P\left(t,x,D\right)u=0$.
We define $y\in H^{-\frac{1}{2}}\left(\partial\Omega\right)$ as
\begin{equation}
\left\langle y,z\right\rangle _{H^{-\frac{1}{2}}\left(\partial\Omega\right)\times H^{\frac{1}{2}}\left(\partial\Omega\right)}=a_{t}\left(u,\mathcal{E}\left(z\right)\right), z\in H^\frac{1}{2}\left(\partial\Omega\right).\label{eq:yza-uez}
\end{equation}

First we show that the above definition is independent of the extension of $z$ we choose: if
$\tilde{z}\in H^{1}\left(\Omega\right)$ is such that $\gamma_{0}\left(\tilde{z}\right)=z$,
then 
\begin{equation}
\left\langle y,z\right\rangle _{H^{-\frac{1}{2}}\left(\partial\Omega\right)\times H^{\frac{1}{2}}\left(\partial\Omega\right)}=a_{t}\left(u,\tilde{z}\right).\label{eq:indoendenceofz}
\end{equation}
Indeed, by the construction of $y$, the expression (\ref{eq:indoendenceofz})
holds if $\tilde{z}=\mathcal{E}\left(z\right)$. Let us now suppose
that $\tilde{z}\in H^{1}\left(\Omega\right)$ is any other function
such that $\gamma_{0}\left(\tilde{z}\right)=z$. As $C_{c}^{\infty}\left(\Omega\right)$
is dense in $H_{0}^{1}\left(\Omega\right)$, we know that 
\[
a_{t}\left(v,w\right)=\left\langle P\left(t,x,D\right)v,w\right\rangle _{H^{-1}\left(\Omega\right)\times H_{0}^{1}\left(\Omega\right)},\,\forall v\in H^{1}\left(\Omega\right)\,\text{ and }\,w\in H_{0}^{1}\left(\Omega\right).
\]
Since $\mathcal{E}\left(z\right)-\tilde{z}\in H_{0}^{1}\left(\Omega\right)$
and $P\left(t,x,D\right)u=0$, we have 
\[
a_{t}\left(u,\mathcal{E}\left(z\right)-\tilde{z}\right)=\left\langle P\left(t,x,D\right)u,\mathcal{E}\left(z\right)-\tilde{z}\right\rangle _{H^{-1}\left(\Omega\right)\times H_{0}^{1}\left(\Omega\right)}=0.
\]
This implies that $\left\langle y,z\right\rangle _{H^{-\frac{1}{2}}\left(\partial\Omega\right)\times H^{\frac{1}{2}}\left(\partial\Omega\right)}=a_{t}\left(u,\mathcal{E}\left(z\right)\right)=a_{t}\left(u,\tilde{z}\right)$.
In particular, 
\[
a_{t}\left(u,v\right)=\left\langle y,\gamma_{0}\left(v\right)\right\rangle _{H^{-\frac{1}{2}}\left(\partial\Omega\right)\times H^{\frac{1}{2}}\left(\partial\Omega\right)},\,\forall v\in H^{1}\left(\Omega\right).
\]
Therefore $C\left(t,x,D\right)u$ exists in the $H^{-\frac{1}{2}}\left(\partial\Omega\right)$-sense and it is equal to $y$.

Finally note that (\ref{eq:yza-uez}) implies that 
\[\left\Vert C\left(t,x,D\right)u\right\Vert _{H^{-\frac{1}{2}}\left(\partial\Omega\right)}=\left\Vert y\right\Vert _{H^{-\frac{1}{2}}\left(\partial\Omega\right)}\le M\left\Vert \mathcal{E}\right\Vert _{\mathcal{B}\left(H^{\frac{1}{2}}\left(\partial\Omega\right),H^{1}\left(\Omega\right)\right)}\left\Vert u\right\Vert _{H^{1}\left(\Omega\right)},\]
where $M$ is the constant of Remark \ref{rem:M}.
\end{proof}
The definition of the Dirichlet-to-Neumann operator and the study of its asymptotic
behavior require the next simple proposition.
\begin{prop}
\label{lem:Dirichlet-Problem}
1) (Dirichlet Problem) Let $y\in H^{\frac{1}{2}}\left(\partial\Omega\right)$.
Then there is a unique $u_{t}\in H^{1}\left(\Omega\right)$ such that
$P\left(t,x,D\right)u_{t}=0$ and $\gamma_{0}\left(u_{t}\right)=y$.
It is given by $\mathcal{E}\left(y\right)-\mathcal{B}_{t,D}^{-1}P\left(t,x,D\right)\mathcal{E}\left(y\right)$.

2) (Neumann Problem) Let $y\in H^{-\frac{1}{2}}\left(\partial\Omega\right)$.
Then there is a unique $u_{t}\in H^{1}\left(\Omega\right)$ such that
$P\left(t,x,D\right)u_{t}=0$ and $C\left(t,x,D\right)u_{t}=y$. It
is given by $\mathcal{B}_{t,N}^{-1}\circ k\left(y\right)$, where
$k:H^{-\frac{1}{2}}\left(\partial\Omega\right)\to H^{1}\left(\Omega\right)^{*}$
is defined as 
\[
\left\langle k\left(w\right),v\right\rangle _{H^{1}\left(\Omega\right)^{*}\times H^{1}\left(\Omega\right)}=\left\langle w,\gamma_{0}\left(v\right)\right\rangle _{H^{-\frac{1}{2}}\left(\partial\Omega\right)\times H^{\frac{1}{2}}\left(\partial\Omega\right)}, v\in H^{1}\left(\Omega\right).
\]
\end{prop}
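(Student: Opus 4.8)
The plan is to handle the two parts in turn: in each case I verify that the displayed formula produces a solution with the required properties, and then deduce uniqueness from the injectivity of the isomorphisms supplied by Remark \ref{rem:M}.

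For the Dirichlet problem I set $u_t := \mathcal{E}(y) - \mathcal{B}_{t,D}^{-1}P(t,x,D)\mathcal{E}(y)$. First I check this is well defined: $\mathcal{E}(y) \in H^1(\Omega)$, and since the coefficients are bounded (Remark \ref{rem:M}), $P(t,x,D)$ acts as a continuous map $H^1(\Omega) \to H^{-1}(\Omega)$ in the distributional sense, so $P(t,x,D)\mathcal{E}(y) \in H^{-1}(\Omega)$ and $\mathcal{B}_{t,D}^{-1}$ sends it into $H_0^1(\Omega)$. The boundary condition is then immediate from $\gamma_0 \circ \mathcal{E} = \mathrm{id}$ together with $\mathcal{B}_{t,D}^{-1}P(t,x,D)\mathcal{E}(y) \in H_0^1(\Omega) = \ker \gamma_0$, giving $\gamma_0(u_t) = y$. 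For the equation I use the identification, recorded in Remark \ref{rem:M}, of $\mathcal{B}_{t,D}$ with $P(t,x,D)$ on $H_0^1(\Omega)$; this yields $P(t,x,D)\circ\mathcal{B}_{t,D}^{-1} = \mathrm{id}$ on $H^{-1}(\Omega)$, so applying $P(t,x,D)$ to $u_t$ cancels the two terms and gives $P(t,x,D)u_t = 0$. Uniqueness is direct: the difference $w$ of two solutions lies in $H_0^1(\Omega)$ and satisfies $\mathcal{B}_{t,D}w = P(t,x,D)w = 0$, whence $w = 0$ because $\mathcal{B}_{t,D}$ is an isomorphism.

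For the Neumann problem I first confirm that $k(y) \in H^1(\Omega)^*$: the estimate $|\langle y, \gamma_0(v)\rangle| \le \|y\|_{H^{-\frac12}(\partial\Omega)} \|\gamma_0\| \|v\|_{H^1(\Omega)}$ shows that $v \mapsto \langle y, \gamma_0(v)\rangle$ is a bounded anti-linear functional, so $u_t := \mathcal{B}_{t,N}^{-1}k(y)$ is a well-defined element of $H^1(\Omega)$, $\mathcal{B}_{t,N}$ being an isomorphism. Unwinding the definitions of $\mathcal{B}_{t,N}$ and $k$ gives, for every $v \in H^1(\Omega)$,
\[
a_t(u_t,v) = \langle \mathcal{B}_{t,N}(u_t), v\rangle = \langle k(y), v\rangle = \langle y, \gamma_0(v)\rangle,
\]
the pairings being the natural ones. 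Restricting to $v \in C_c^\infty(\Omega)$, where $\gamma_0(v)=0$, and using density of $C_c^\infty(\Omega)$ in $H_0^1(\Omega)$ shows $P(t,x,D)u_t = 0$; the full identity is precisely the statement of Definition \ref{def:conormal} that $C(t,x,D)u_t = y$ in the $H^{-\frac12}(\partial\Omega)$-weak sense. For uniqueness, a difference $w$ of two solutions satisfies $a_t(w,v) = 0$ for all $v \in H^1(\Omega)$ (since both $P(t,x,D)w=0$ and $C(t,x,D)w=0$ hold), and taking $v=w$ and invoking the coercivity \eqref{eq:Rea(u,u)geu} forces $w=0$.

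I expect no essential obstacle: the argument is bookkeeping that assembles the two Lax--Milgram isomorphisms $\mathcal{B}_{t,D}$, $\mathcal{B}_{t,N}$ with the trace and extension operators. The only points needing mild care are the verification $P(t,x,D)u_t = 0$ in the Dirichlet case, where one must read the composition $P(t,x,D)\circ\mathcal{B}_{t,D}^{-1}$ correctly through the identification of $\mathcal{B}_{t,D}$ with $P(t,x,D)$ on $H_0^1(\Omega)$, and the density step in the Neumann case, where one passes from test functions $C_c^\infty(\Omega)$ to all of $H_0^1(\Omega)$ to conclude the interior equation.
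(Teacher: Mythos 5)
Your proof is correct and follows essentially the same route as the paper: construct the candidate solutions from the Lax--Milgram isomorphisms $\mathcal{B}_{t,D}$ and $\mathcal{B}_{t,N}$, verify they satisfy the equations by unwinding the definitions, and get uniqueness from coercivity. The only cosmetic difference is that for the Dirichlet uniqueness you invoke the injectivity of $\mathcal{B}_{t,D}$ where the paper applies the coercivity estimate \eqref{eq:Rea(u,u)geu} directly to the difference of two solutions; these are the same fact.
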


\begin{proof}
1) $u_{t}=\mathcal{E}\left(y\right)-\mathcal{B}_{t,D}^{-1}P\left(t,x,D\right)\mathcal{E}\left(y\right)$
is clearly a solution of the Dirichlet problem. If $v_{t}\in H^{1}\left(\Omega\right)$
is another solution, then $\gamma_{0}\left(u_{t}-v_{t}\right)=0$.
Hence $u_{t}-v_{t}\in H_{0}^{1}\left(\Omega\right)$ and, due to item 3 of Assumption \ref{fact:Assumptionsoftheform},
\begin{eqnarray}
C\left\Vert u_{t}-v_{t}\right\Vert _{H^{1}\left(\Omega\right)}^{2}&\le& \text{Re}\left(a_{t}\left(u_{t}-v_{t},u_{t}-v_{t}\right)\right)
\nonumber\\
&=&
\text{Re}\left(\left\langle P\left(t,x,D\right)\left(u_{t}-v_{t}\right),\left(u_{t}-v_{t}\right)\right\rangle _{H^{-1}\left(\Omega\right)\times H^{1}_{0}\left(\Omega\right)}\right)=0.
\nonumber
\end{eqnarray}

2) Let $u_{t}=\mathcal{B}_{t,N}^{-1}\circ k\left(y\right)$. Then, for $v\in H^1(\Omega)$,
\begin{equation}\label{eq:(*)}
a_{t}\left(\mathcal{B}_{t,N}^{-1}\circ k\left(y\right),v\right)=\left\langle \mathcal{B}_{t,N}\left(\mathcal{B}_{t,N}^{-1}\circ k\left(y\right)\right),v\right\rangle _{H^{1}\left(\Omega\right)^{*}\times H^{1}\left(\Omega\right)}=\left\langle y,\gamma_{0}\left(v\right)\right\rangle _{H^{-\frac{1}{2}}\left(\partial\Omega\right)\times H^{\frac{1}{2}}\left(\partial\Omega\right)}.
\end{equation}
We conclude that $y$ is the $H^{-\frac{1}{2}}\left(\partial\Omega\right)$-weak
conormal derivative of $u_{t}$ and, therefore, the solution of the
Neumann problem. If $v_{t}\in H^{1}\left(\Omega\right)$ is another
solution, then
\[
a_{t}\left(u_{t}-v_{t},u_{t}-v_{t}\right)=\left\langle C\left(t,x,D\right)\left(u_{t}-v_{t}\right),\gamma_{0}\left(u_{t}-v_{t}\right)\right\rangle _{H^{-\frac{1}{2}}\left(\partial\Omega\right)\times H^{\frac{1}{2}}\left(\partial\Omega\right)}=0.
\]

Hence $u_{t}=v_{t}$.
\end{proof}
Finally we give a precise definition of the Dirichlet-to-Neumann operator.
\begin{defn}\label{defn:DtN}
For each $t\in\left[t_{0},\infty\right]$, we define a bounded operator
$A\left(t\right):H^{\frac{1}{2}}\left(\partial\Omega\right)\to H^{-\frac{1}{2}}\left(\partial\Omega\right)$,
called Dirichlet-to-Neumann operator, as 
\[
A\left(t\right)y=C\left(t,x,D\right)\left(\mathcal{E}\left(y\right)-\mathcal{B}_{t,D}^{-1}P\left(t,x,D\right)\mathcal{E}\left(y\right)\right).
\]
\end{defn}

\begin{prop}\label{cor:uniforAt}
The operators $A\left(t\right)$ are invertible for all $t\in\left[t_{0},\infty\right]$.
Moreover the families 
\[\left\{ A\left(t\right)\in\mathcal{B}(H^{\frac{1}{2}}\left(\partial\Omega\right),H^{-\frac{1}{2}}\left(\partial\Omega\right))\right\} _{t\in\left[t_{0},\infty\right]}
\hbox{ and } \left\{ A\left(t\right)^{-1}\in\mathcal{B}(H^{-\frac{1}{2}}\left(\partial\Omega\right),H^{\frac{1}{2}}\left(\partial\Omega\right))\right\} _{t\in\left[t_{0},\infty\right]}
\]
are uniformly bounded.
\end{prop}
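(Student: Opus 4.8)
The plan is to recognize that the inverse of the Dirichlet-to-Neumann map $A(t)$ is the ``Neumann-to-Dirichlet'' map $\gamma_{0}\circ\mathcal{B}_{t,N}^{-1}\circ k$, to establish invertibility by reading off surjectivity and injectivity from the two unique-solvability statements of Proposition~\ref{lem:Dirichlet-Problem}, and then to extract the uniform bounds from the $t$-independent operator norm estimates collected in Remark~\ref{rem:M}.

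First I would prove invertibility of each $A(t)$ by treating surjectivity and injectivity separately. For surjectivity, given $y\in H^{-\frac{1}{2}}\left(\partial\Omega\right)$, item~2 of Proposition~\ref{lem:Dirichlet-Problem} yields the unique $u_{t}=\mathcal{B}_{t,N}^{-1}\circ k\left(y\right)\in H^{1}\left(\Omega\right)$ with $P\left(t,x,D\right)u_{t}=0$ and $C\left(t,x,D\right)u_{t}=y$. Setting $z:=\gamma_{0}\left(u_{t}\right)\in H^{\frac{1}{2}}\left(\partial\Omega\right)$, the function $u_{t}$ solves the Dirichlet problem with boundary data $z$, so by the uniqueness in item~1 it coincides with the function used in Definition~\ref{defn:DtN}; hence $A\left(t\right)z=C\left(t,x,D\right)u_{t}=y$. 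For injectivity, if $A\left(t\right)y=0$, let $u_{t}$ be the Dirichlet solution with $\gamma_{0}\left(u_{t}\right)=y$; then $C\left(t,x,D\right)u_{t}=0$, so $u_{t}$ solves the Neumann problem with datum $0$, and the uniqueness in item~2 forces $u_{t}=0$, whence $y=\gamma_{0}\left(u_{t}\right)=0$. These two facts show that $R\left(t\right):=\gamma_{0}\circ\mathcal{B}_{t,N}^{-1}\circ k$ is a right inverse of $A\left(t\right)$ and that $A\left(t\right)$ is injective; since $A\left(t\right)\bigl(R\left(t\right)A\left(t\right)x\bigr)=A\left(t\right)x$ for all $x$, injectivity upgrades $R\left(t\right)$ to a two-sided inverse, so $A\left(t\right)^{-1}=R\left(t\right)$.

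For the uniform bounds, I would handle the two families with the explicit formulas. For $\left\{A\left(t\right)\right\}$, I would combine Definition~\ref{defn:DtN}, namely $A\left(t\right)y=C\left(t,x,D\right)\bigl(\mathcal{E}\left(y\right)-\mathcal{B}_{t,D}^{-1}P\left(t,x,D\right)\mathcal{E}\left(y\right)\bigr)$, with the $t$-independent estimate $\left\Vert C\left(t,x,D\right)u\right\Vert_{H^{-\frac{1}{2}}\left(\partial\Omega\right)}\le M\left\Vert\mathcal{E}\right\Vert\left\Vert u\right\Vert_{H^{1}\left(\Omega\right)}$ proved earlier, together with $\left\Vert\mathcal{B}_{t,D}^{-1}\right\Vert\le 1/C$ and $\left\Vert P\left(t,x,D\right)\right\Vert_{\mathcal{B}\left(H^{1}\left(\Omega\right),H^{-1}\left(\Omega\right)\right)}\le M$; the last inequality holds because $\left\langle P\left(t,x,D\right)w,\phi\right\rangle_{H^{-1}\left(\Omega\right)\times H_{0}^{1}\left(\Omega\right)}=a_{t}\left(w,\phi\right)$ for $\phi\in H_{0}^{1}\left(\Omega\right)$ and $\left|a_{t}\left(w,\phi\right)\right|\le M\left\Vert w\right\Vert_{H^{1}\left(\Omega\right)}\left\Vert\phi\right\Vert_{H^{1}\left(\Omega\right)}$. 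All the constants $C$, $M$, and $\left\Vert\mathcal{E}\right\Vert$ are uniform over $\left[t_{0},\infty\right]$ by Remark~\ref{rem:M}. For $\left\{A\left(t\right)^{-1}\right\}$, I would use $A\left(t\right)^{-1}=\gamma_{0}\circ\mathcal{B}_{t,N}^{-1}\circ k$: the operators $\gamma_{0}$ and $k$ are independent of $t$, and $\left\Vert\mathcal{B}_{t,N}^{-1}\right\Vert\le 1/C$ uniformly, giving $\left\Vert A\left(t\right)^{-1}\right\Vert\le\left\Vert\gamma_{0}\right\Vert\left\Vert k\right\Vert/C$.

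The argument is essentially bookkeeping, so there is no deep obstacle once the inverse is identified as the Neumann-to-Dirichlet map. The only points requiring genuine care are verifying that the candidate right inverse is two-sided, which I dispatch through injectivity of $A\left(t\right)$, and confirming that every constant entering the estimates is truly independent of $t$; the latter is exactly what the uniform coercivity constant $C$ and the uniform continuity constant $M$ of Remark~\ref{rem:M} supply across the closed parameter interval $\left[t_{0},\infty\right]$.
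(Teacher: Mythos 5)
Your proposal is correct and follows essentially the same route as the paper: both identify the inverse as the Neumann-to-Dirichlet map $\gamma_{0}\circ\mathcal{B}_{t,N}^{-1}\circ k$ via Proposition \ref{lem:Dirichlet-Problem} and then read off the uniform bounds from the $t$-independent estimates of Remark \ref{rem:M}. Your splitting of invertibility into separate surjectivity and injectivity arguments is a minor repackaging of the paper's left-inverse/right-inverse computation, not a genuinely different method.
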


\begin{proof}
In order to conclude that the family $\left\{ A\left(t\right)\right\} _{t\in\left[t_{0},\infty\right]}$
is uniformly bounded, it is enough to note that $\mathcal{B}_{t,D}^{-1}:H^{-1}\left(\Omega\right)\to H_{0}^{1}\left(\Omega\right)$, $P\left(t,x,D\right):H^{1}\left(\Omega\right)\to H^{-1}\left(\Omega\right)$ and $C\left(t,x,D\right):\text{ker}\left(P\left(t,x,D\right)\right)\subset H^{1}\left(\Omega\right)\to H^{-\frac{1}{2}}\left(\partial\Omega\right)$ are uniformly bounded.

For the family $\left\{ A\left(t\right)^{-1}\right\} _{t\in\left[t_{0},\infty\right]}$,
we first need a good representation of the inverse. We have seen that
$A\left(t\right)y=C\left(t,x,D\right)u_{t}$, where $u_{t}=\mathcal{E}\left(y\right)-\mathcal{B}_{t,D}^{-1}\left(P\left(t,x,D\right)\mathcal{E}\left(y\right)\right)$.
Hence $u_{t}$ solves the Neumann problem $P\left(t,x,D\right)u_{t}=0$
and $C\left(t,x,D\right)u_{t}=A\left(t\right)y$. Proposition \ref{lem:Dirichlet-Problem} (2)
implies that $u_{t}=\mathcal{B}_{t,N}^{-1}\circ k\left(A\left(t\right)y\right)$.
Therefore $y=\gamma_{0}\left(u_{t}\right)=\gamma_{0}\circ\mathcal{B}_{t,N}^{-1}\circ k\left(A\left(t\right)\left(y\right)\right)$.

On the other hand, if $z\in H^{-\frac{1}{2}}\left(\partial\Omega\right)$, then, due to \eqref{eq:(*)}, $v_{t}=\mathcal{B}_{t,N}^{-1}\circ k\left(z\right)$ is such that 
\[
a_{t}\left(v_{t},v\right)=\left\langle z,\gamma_{0}\left(v\right)\right\rangle _{H^{-\frac{1}{2}}\left(\partial\Omega\right)\times H^{\frac{1}{2}}\left(\partial\Omega\right)},\,\forall v\in H^{1}\left(\Omega\right).
\]
Thus we conclude that $A\left(t\right)\left(\gamma_{0}\circ\mathcal{B}_{t,N}^{-1}\circ k\left(z\right)\right)=A\left(t\right)\left(\gamma_{0}\left(v_{t}\right)\right)=z$.

The above discussion implies that $A\left(t\right)^{-1}=\gamma_{0}\circ\mathcal{B}_{t,N}^{-1}\circ k$. Therefore $\left\{ A\left(t\right)^{-1}\right\} _{t\in\left[t_{0},\infty\right]}$ is uniformly bounded, since $\mathcal{B}_{t,N}^{-1}:H^1(\Omega)^{*}\to H^1(\Omega)$ is uniformly bounded.
\end{proof}

We end this subsection giving a characterization of the Dirichlet-to-Neumann operators from $H^\frac{1}{2}(\partial\Omega)$ to $H^{-\frac{1}{2}}(\partial\Omega)$ using form methods.

\begin{thm}
\label{thm:Existencia Dirichlet-to-Neumann} For every $y\in H^{\frac{1}{2}}\left(\partial\Omega\right)$,
there is a $u_{t}\in H^{1}\left(\Omega\right)$ such that
\begin{equation}
\gamma_{0}\left(u_{t}\right)=y\quad\textrm{ and }\quad a_{t}\left(u_{t},v\right)=\left\langle A\left(t\right)y,\gamma_{0}\left(v\right)\right\rangle _{H^{-\frac{1}{2}}\left(\partial\Omega\right)\times H^{\frac{1}{2}}\left(\partial\Omega\right)},\label{eq:characdeAt}
\end{equation}
 for all $v\in H^{1}\left(\Omega\right)$. The Dirichlet-to-Neumann operators $A\left(t\right)\in\mathcal{B}\left(H^\frac{1}{2}(\partial\Omega),H^{-\frac{1}{2}}(\partial\Omega)\right)$ are the only operators with this property.
\end{thm}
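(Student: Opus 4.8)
The plan is to let $u_{t}$ be exactly the solution of the Dirichlet problem with boundary datum $y$ produced by Proposition \ref{lem:Dirichlet-Problem}(1), namely $u_{t}=\mathcal{E}\left(y\right)-\mathcal{B}_{t,D}^{-1}P\left(t,x,D\right)\mathcal{E}\left(y\right)$. By construction this $u_{t}$ satisfies $\gamma_{0}\left(u_{t}\right)=y$ and $P\left(t,x,D\right)u_{t}=0$, and it is precisely the function inside the conormal derivative in Definition \ref{defn:DtN}, so that $A\left(t\right)y=C\left(t,x,D\right)u_{t}$. For the existence part I would then invoke the earlier Proposition on the existence of the weak conormal derivative: since $P\left(t,x,D\right)u_{t}=0$, the derivative $C\left(t,x,D\right)u_{t}$ exists in the $H^{-\frac{1}{2}}\left(\partial\Omega\right)$-weak sense, and by Definition \ref{def:conormal} this is exactly the identity $a_{t}\left(u_{t},v\right)=\left\langle C\left(t,x,D\right)u_{t},\gamma_{0}\left(v\right)\right\rangle _{H^{-\frac{1}{2}}\left(\partial\Omega\right)\times H^{\frac{1}{2}}\left(\partial\Omega\right)}=\left\langle A\left(t\right)y,\gamma_{0}\left(v\right)\right\rangle _{H^{-\frac{1}{2}}\left(\partial\Omega\right)\times H^{\frac{1}{2}}\left(\partial\Omega\right)}$ for every $v\in H^{1}\left(\Omega\right)$. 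Together with $\gamma_{0}\left(u_{t}\right)=y$, this establishes property \eqref{eq:characdeAt}.

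For uniqueness, suppose $\tilde{A}\left(t\right)\in\mathcal{B}\left(H^{\frac{1}{2}}(\partial\Omega),H^{-\frac{1}{2}}(\partial\Omega)\right)$ satisfies the same property, and for a fixed $y$ let $\tilde{u}_{t}\in H^{1}\left(\Omega\right)$ be an associated function with $\gamma_{0}\left(\tilde{u}_{t}\right)=y$ and $a_{t}\left(\tilde{u}_{t},v\right)=\left\langle \tilde{A}\left(t\right)y,\gamma_{0}\left(v\right)\right\rangle _{H^{-\frac{1}{2}}\left(\partial\Omega\right)\times H^{\frac{1}{2}}\left(\partial\Omega\right)}$ for all $v$. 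The key move is to restrict the test functions to $v\in H_{0}^{1}\left(\Omega\right)$: since then $\gamma_{0}\left(v\right)=0$, we get $a_{t}\left(\tilde{u}_{t},v\right)=0$ for all $v\in H_{0}^{1}\left(\Omega\right)$, which is precisely $P\left(t,x,D\right)\tilde{u}_{t}=0$ in the sense of distributions. Hence $\tilde{u}_{t}$ solves the same Dirichlet problem as $u_{t}$, and the uniqueness in Proposition \ref{lem:Dirichlet-Problem}(1) forces $\tilde{u}_{t}=u_{t}$. Comparing the two form identities then gives $\left\langle \tilde{A}\left(t\right)y,\gamma_{0}\left(v\right)\right\rangle =\left\langle A\left(t\right)y,\gamma_{0}\left(v\right)\right\rangle$ for all $v\in H^{1}\left(\Omega\right)$, and since $\gamma_{0}$ is surjective onto $H^{\frac{1}{2}}\left(\partial\Omega\right)$ (because $\gamma_{0}\circ\mathcal{E}$ is the identity), the element $\gamma_{0}\left(v\right)$ exhausts $H^{\frac{1}{2}}\left(\partial\Omega\right)$, whence $\tilde{A}\left(t\right)y=A\left(t\right)y$. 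As $y$ was arbitrary, $\tilde{A}\left(t\right)=A\left(t\right)$.

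The existence half is essentially a repackaging of Definition \ref{defn:DtN} with the preceding two results, so the only genuinely non-routine point is the uniqueness argument. The subtlety I expect to be the main obstacle is recognizing that the defining property guarantees merely the existence of \emph{some} companion function $\tilde{u}_{t}$, not that it coincides with the Dirichlet solution; one must first recover $P\left(t,x,D\right)\tilde{u}_{t}=0$ by testing against $H_{0}^{1}\left(\Omega\right)$ before the uniqueness of the Dirichlet problem, and then the surjectivity of the trace, can be brought to bear.
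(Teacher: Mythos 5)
Your proposal is correct and follows essentially the same route as the paper: the existence half is read off from Definition \ref{defn:DtN} together with the existence of the weak conormal derivative, and uniqueness is obtained by first showing the companion function is forced to coincide with the Dirichlet solution and then using the surjectivity of the trace. The only (minor) difference is how you pin down $\tilde{u}_{t}$: you test against $H_{0}^{1}\left(\Omega\right)$ to recover $P\left(t,x,D\right)\tilde{u}_{t}=0$ and invoke the uniqueness in Proposition \ref{lem:Dirichlet-Problem}(1), whereas the paper subtracts the two form identities and applies coercivity directly with $v=u_{t}-\tilde{u}_{t}$; both arguments rest on the same coercivity assumption \eqref{eq:Rea(u,u)geu}.
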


\begin{proof}
By definition, $A\left(t\right)y=C\left(t,x,D\right)u_{t}$, where $u_{t}=\mathcal{E}\left(y\right)-\mathcal{B}_{t,D}^{-1}P\left(t,x,D\right)\mathcal{E}\left(y\right)$. Using the definition of $C\left(t,x,D\right)u_{t}$, we see that
\[
a_{t}\left(u_{t},v\right)=\left\langle C\left(t,x,D\right)u_{t},\gamma_{0}\left(v\right)\right\rangle _{H^{-\frac{1}{2}}\left(\partial\Omega\right)\times H^{\frac{1}{2}}\left(\partial\Omega\right)},\,v\in H^{1}\left(\Omega\right),
\]
which is equivalent to (\ref{eq:characdeAt}).

Let us now prove uniqueness. Suppose that $A\left(t\right):H^{\frac{1}{2}}\left(\partial\Omega\right)\to H^{-\frac{1}{2}}\left(\partial\Omega\right)$
and $\tilde{A}\left(t\right):H^{\frac{1}{2}}\left(\partial\Omega\right)\to H^{-\frac{1}{2}}\left(\partial\Omega\right)$
are operators that satisfy the properties stated in the theorem. Then, for every $y\in H^{\frac{1}{2}}\left(\partial\Omega\right)$,
there exist $u_{t}\in H^{1}\left(\Omega\right)$ and $\tilde{u}_{t}\in H^{1}\left(\Omega\right)$
such that $\gamma_{0}\left(u_{t}\right)=\gamma_{0}\left(\tilde{u}_{t}\right)=y$,
and, for all $v\in H^{1}\left(\Omega\right)$, 
\[
a_{t}\left(u_{t},v\right)=\left\langle A\left(t\right)y,\gamma_{0}\left(v\right)\right\rangle _{H^{-\frac{1}{2}}\left(\partial\Omega\right)\times H^{\frac{1}{2}}\left(\partial\Omega\right)}\;\textrm{ and }\;a_{t}\left(\tilde{u}_{t},v\right)=\left\langle \tilde{A}\left(t\right)y,\gamma_{0}\left(v\right)\right\rangle _{H^{-\frac{1}{2}}\left(\partial\Omega\right)\times H^{\frac{1}{2}}\left(\partial\Omega\right)}.
\]
 In this case, 
\[
a_{t}\left(u_{t}-\tilde{u}_{t},v\right)=\left\langle A\left(t\right)y-\tilde{A}\left(t\right)y,\gamma_{0}\left(v\right)\right\rangle _{H^{-\frac{1}{2}}\left(\partial\Omega\right)\times H^{\frac{1}{2}}\left(\partial\Omega\right)},\,\forall v\in H^{1}\left(\Omega\right).
\]

Choosing $v=u_{t}-\tilde{u}_{t}$, we have $a_{t}\left(u_{t}-\tilde{u}_{t},u_{t}-\tilde{u}_{t}\right)=0$.
Hence $u_{t}=\tilde{u}_{t}$ and
\[ \left\langle A\left(t\right)y-\tilde{A}\left(t\right)y,\gamma_{0}\left(v\right)\right\rangle _{H^{-\frac{1}{2}}\left(\partial\Omega\right)\times H^{\frac{1}{2}}\left(\partial\Omega\right)}=0, \forall v\in H^{1}\left(\Omega\right),
\]
which implies that $A\left(t\right)=\tilde{A}\left(t\right)$.
\end{proof}
\begin{rem}
\label{rem:juleu}The proof of Theorem \ref{thm:Existencia Dirichlet-to-Neumann}
implies that the function $u_{t}$ associated to $y$ is unique and it
is given by $\mathcal{E}\left(y\right)-\mathcal{B}_{t,D}^{-1}\left(P\left(t,x,D\right)\mathcal{E}\left(y\right)\right)$.
As $\mathcal{B}_{t,D}^{-1}:H^{-1}(\Omega)\to H^{1}_{0}(\Omega)$ and $P\left(t,x,D\right):H^{1}(\Omega)\to H^{-1}(\Omega)$ are uniformly
bounded, there is a constant $C>0$, which does not depend on $t\in\left[t_{0},\infty\right]$,
such that 
\[
\left\Vert u_{t}\right\Vert _{H^{1}\left(\Omega\right)}\le C\left\Vert y\right\Vert _{H^{\frac{1}{2}}\left(\partial\Omega\right)}=C\left\Vert \gamma_{0}\left(u_{t}\right)\right\Vert _{H^{\frac{1}{2}}\left(\partial\Omega\right)},\,\forall y\in H^{\frac{1}{2}}\left(\partial\Omega\right).
\]
\end{rem}

\subsection{Well-posedness and asymptotic behavior.}

\subsubsection{The Tanabe-Sobolevskii conditions}

Let $H$ and $\mathcal{D}$ be Hilbert spaces such that $\mathcal{D}\subset H$
is a dense subset and the injection $\mathcal{D}\hookrightarrow H$ is
continuous. We consider a family of bounded operators $\left\{ S\left(t\right)\in\mathcal{B}\left(\mathcal{D},H\right)\right\} _{t\in\left[t_{0},\infty\right]}$.
\begin{defn}
\label{def:Tanabe-Sobolevskii conditions}The family of operators
$\left\{ S\left(t\right)\right\} _{t\in\left[t_{0},\infty\right]}$ satisfies
the Tanabe-Sobolevskii conditions if

1) The set $\left\{ \lambda\in\mathbb{C},\,\text{Re}\left(\lambda\right)\le0\right\} $
is contained in the resolvent set of the linear operator $S(t):\mathcal{D}\subset H\to H$,
$t\in\left[t_{0},\infty\right]$, and there is a constant $C>0$ such
that 
\[
\left\Vert \left(\lambda-S\left(t\right)\right)^{-1}\right\Vert _{\mathcal{B}\left(H\right)}\le\frac{C}{1+\left|\lambda\right|},\,\text{Re}\left(\lambda\right)\le0,\,t\in\left[t_{0},\infty\right[.
\]

2) The function $\left[t_{0},\infty\right[\ni t\mapsto S\left(t\right)\in\mathcal{B}\left(\mathcal{D},H\right)$ belongs to $C_{u}^{\alpha}\left(\left[t_{0},\infty\right[,\mathcal{B}\left(\mathcal{D},H\right)\right)$, for some $\alpha\in \left] 0,1 \right]$.


3) $\lim_{t\to\infty}\left\Vert S\left(t\right)-S\left(\infty\right)\right\Vert _{\mathcal{B}\left(\mathcal{D},H\right)}=0.$

4) The families $\left\{ S\left(t\right)\in\mathcal{B}\left(\mathcal{D},H\right)\right\} _{t\in\left[t_{0},\infty\right[}$
and $\left\{ S\left(t\right)^{-1}\in\mathcal{B}\left(H,\mathcal{D}\right)\right\} _{t\in\left[t_{0},\infty\right[}$
are uniformly bounded.
\end{defn}
\begin{thm}\label{thm:teorema10}
(Tanabe-Sobolevskii) Let $f\in C_{u}^{\alpha}\left(\left[t_{0},\infty\right[,H\right)$.
Then, for every $u_{0}\in H$, there is a unique function $u\in C\left(\left[t_{0},\infty\right[,H\right)\cap C^{1}\left(\left]t_{0},\infty\right[,H\right)\cap C\left(\left]t_{0},\infty\right[,\mathcal{D}\right)$
such that 

\begin{equation}
\begin{array}{rcl}
\frac{du}{dt}\left(t\right)+S\left(t\right)u\left(t\right) & = & f\left(t\right),\,t>t_{0}\\
u\left(t_{0}\right) & = & u_{0}
\end{array}.
\end{equation}

The operator $S\left(\infty\right):\mathcal{D}\to H$ is invertible and if $\lim_{t\to\infty}f\left(t\right)=f_{\infty}\in H$, then $u_{\infty}=S\left(\infty\right)^{-1}f_{\infty}\in\mathcal{D}$ is such that $\lim_{t\to\infty}\left\Vert u\left(t\right)-u_{\infty}\right\Vert _{\mathcal{D}}=0$. In other words, $u\left(t\right)$ converges to the stationary solution $S\left(\infty\right)u_{\infty}=f_{\infty}$.
\end{thm}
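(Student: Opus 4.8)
The plan is to run the classical Tanabe--Sobolevskii parametrix construction to obtain a propagator, read off existence, regularity and uniqueness from it, and then superimpose an exponential-stability argument for the asymptotics; the genuinely delicate part will be the convergence in the stronger norm of $\mathcal{D}$.

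First I would extract analytic semigroups and uniform exponential decay from the resolvent hypothesis. Condition 1 of Definition \ref{def:Tanabe-Sobolevskii conditions}, namely $\|(\lambda-S(t))^{-1}\|_{\mathcal{B}(H)}\le C/(1+|\lambda|)$ on $\{\mathrm{Re}\,\lambda\le 0\}$ uniformly in $t$, makes each $-S(t)$ the generator of an analytic semigroup $e^{-\tau S(t)}$ with $t$-independent bounds $\|e^{-\tau S(t)}\|_{\mathcal{B}(H)}\le M$ and $\|S(t)e^{-\tau S(t)}\|_{\mathcal{B}(H)}\le M/\tau$. Since condition 4 gives $\|S(t)^{-1}\|$ uniformly bounded, the point $\lambda=0$ stays uniformly in the resolvent set, so the resolvent bound up to the imaginary axis forces a uniform spectral gap and hence $\|e^{-\tau S(t)}\|_{\mathcal{B}(H)}\le Me^{-\delta\tau}$ for some $\delta>0$. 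Condition 4 also makes the graph norms of all the $S(t)$ uniformly equivalent to the fixed norm of $\mathcal{D}$, which is what lets the constant-domain theory proceed.

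Next I would build the evolution family $U(t,s)$, $t\ge s\ge t_0$, by Tanabe's method, setting
\[
U(t,s)=e^{-(t-s)S(s)}+\int_s^t e^{-(t-\tau)S(\tau)}R(\tau,s)\,d\tau,
\]
where $R$ solves the Volterra equation $R(t,s)=R_1(t,s)+\int_s^t R_1(t,\tau)R(\tau,s)\,d\tau$ with $R_1(t,s)=(S(s)-S(t))e^{-(t-s)S(s)}$. Condition 2 (Hölder continuity of $t\mapsto S(t)$ in $\mathcal{B}(\mathcal{D},H)$) combined with the smoothing bound gives the integrable singularity $\|R_1(t,s)\|_{\mathcal{B}(H)}\le C(t-s)^{\alpha-1}$, so the Neumann series for $R$ converges and inherits the exponential decay. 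One checks $\partial_t U(t,s)=-S(t)U(t,s)$ and $U(s,s)=I$, and then the solution is the Duhamel formula $u(t)=U(t,t_0)u_0+\int_{t_0}^t U(t,\tau)f(\tau)\,d\tau$. Analyticity puts $u(t)\in\mathcal{D}$ for $t>t_0$ even though $u_0\in H$ only; to place the convolution term in $\mathcal{D}$ and differentiate it in $H$ one uses the subtraction trick $\int U(t,\tau)f(\tau)\,d\tau=\int U(t,\tau)(f(\tau)-f(t))\,d\tau+(\int U(t,\tau)\,d\tau)f(t)$, where the hypothesis $f\in C_u^\alpha$ turns the $1/(t-\tau)$ singularity into the integrable $(t-\tau)^{\alpha-1}$. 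This yields $u\in C([t_0,\infty),H)\cap C^1((t_0,\infty),H)\cap C((t_0,\infty),\mathcal{D})$. Uniqueness follows because for a homogeneous solution with $u_0=0$ the map $\tau\mapsto U(t,\tau)u(\tau)$ is constant on $(t_0,t)$, forcing $u\equiv 0$.

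For the asymptotics I would first note that $S(\infty)$ is invertible: the identity $S(\infty)^{-1}-S(t)^{-1}=S(\infty)^{-1}(S(t)-S(\infty))S(t)^{-1}$ together with conditions 3 and 4 gives $S(t)^{-1}\to S(\infty)^{-1}$ in operator norm, so $u_\infty=S(\infty)^{-1}f_\infty\in\mathcal{D}$ is well defined. Writing $w=u-u_\infty$ and using $S(\infty)u_\infty=f_\infty$, one finds $w'+S(t)w=g(t)$ with $g(t)=(f(t)-f_\infty)+(S(\infty)-S(t))u_\infty$; here $g\in C_u^\alpha([t_0,\infty),H)$ and $\|g(t)\|_H\to0$ by condition 3 and $f(t)\to f_\infty$. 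Convergence $w(t)\to0$ in $H$ is then clean: from $\|U(t,\tau)\|_{\mathcal{B}(H)}\le Me^{-\omega(t-\tau)}$ (no singularity) and $w(t)=U(t,t_0)w(t_0)+\int_{t_0}^t U(t,\tau)g(\tau)\,d\tau$, one splits the integral at a large time $T$, bounding the part over $[t_0,T]$ by $Ce^{-\omega(t-T)}$ and the part over $[T,t]$ by $\omega^{-1}M\sup_{\tau\ge T}\|g(\tau)\|_H$, then lets $t\to\infty$ and $T\to\infty$.

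The main obstacle is upgrading this to convergence in $\mathcal{D}$, because there $\|U(t,\tau)\|_{\mathcal{B}(H,\mathcal{D})}$ carries the non-integrable $1/(t-\tau)$ singularity at the diagonal while $g$ is only known to be small in $H$. I would handle it with a unit-time shift, $w(t)=U(t,t-1)w(t-1)+\int_{t-1}^t U(t,\tau)g(\tau)\,d\tau$: the first term tends to $0$ in $\mathcal{D}$ since $\|U(t,t-1)\|_{\mathcal{B}(H,\mathcal{D})}$ is bounded and $w(t-1)\to0$ in $H$, and for the integral I again use $g(\tau)=(g(\tau)-g(t))+g(t)$, but now split $[t-1,t]$ into a short piece $[t-\eta,t]$, controlled by $MC\int_0^\eta\sigma^{\alpha-1}\,d\sigma$ via the Hölder estimate, and the remainder $[t-1,t-\eta]$, where the singularity is bounded by $1/\eta$ and the factor $\|g(\tau)-g(t)\|_H$ is small for large $t$; the term $(\int_{t-1}^t U(t,\tau)\,d\tau)g(t)$ is handled by a uniform $\mathcal{B}(H,\mathcal{D})$ bound on the integrated propagator times $\|g(t)\|_H\to0$. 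Choosing $\eta$ small and then $t$ large gives $\|w(t)\|_{\mathcal{D}}\to0$. The delicate bookkeeping lies precisely in reconciling the diagonal singularity of $U$ into $\mathcal{D}$ with the mere $H$-smallness of the forcing, and in making all the parametrix estimates on $R$ and $U$ uniform in $(t,s)$, which rests on the $t$-independence built into conditions 1--4.
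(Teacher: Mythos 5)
Your proposal is correct in outline, but it takes a genuinely different route from the paper, which is essentially a citation proof: existence, uniqueness and the regularity class are quoted from Pazy (Theorem 6.8, Chapter 5.6), and the facts $\left\Vert u\left(t\right)-u_{\infty}\right\Vert _{H}\to0$ together with $\left\Vert \frac{du}{dt}\left(t\right)\right\Vert _{H}\to0$ are quoted from Tanabe (Theorem 5.6.1). The only argument the paper actually carries out is the upgrade from $H$-convergence to $\mathcal{D}$-convergence, and there it avoids your propagator-splitting entirely: it solves the equation for $u\left(t\right)=S\left(t\right)^{-1}\left(f\left(t\right)-\frac{du}{dt}\left(t\right)\right)$ and writes
\[
u\left(t\right)-u_{\infty}=S\left(t\right)^{-1}\left(f\left(t\right)-f_{\infty}\right)+\left(S\left(t\right)^{-1}-S\left(\infty\right)^{-1}\right)f_{\infty}-S\left(t\right)^{-1}\frac{du}{dt}\left(t\right),
\]
which tends to zero in $\mathcal{D}$ using only the uniform bound on $S\left(t\right)^{-1}\in\mathcal{B}\left(H,\mathcal{D}\right)$, the norm convergence $S\left(t\right)^{-1}\to S\left(\infty\right)^{-1}$ (via the resolvent identity, exactly as you derive it), and the quoted decay of $\frac{du}{dt}$. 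Your approach instead rebuilds the whole Tanabe parametrix, establishes uniform exponential decay of the evolution family, and handles the $\mathcal{D}$-convergence by a unit-time-shift representation with a two-scale splitting of the singular integral. That is the hard way: it is self-contained and makes visible where the uniformity in $t$ of conditions 1--4 enters (the uniform Hölder constant and the uniform spectral gap), but it requires you to verify several nontrivial uniform propagator estimates (notably the $\mathcal{B}\left(H,\mathcal{D}\right)$ bound on the integrated propagator) that the paper never needs, because once one accepts $\left\Vert \frac{du}{dt}\left(t\right)\right\Vert _{H}\to0$ from the classical theory, the $\mathcal{D}$-convergence is a three-line algebraic consequence of the equation itself. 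Both arguments are sound; the paper's is shorter precisely because it treats the derivative decay as a black box rather than re-proving it through the propagator.
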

\begin{proof}
The existence and uniqueness of $u$ follows from Theorem 6.8 in \cite[Chapter
5.6]{Pazy}. One can even show that the solution is Hölder continuous
\cite[Theorem 1.2.1]{Amann}.

From Tanabe \cite[Theorem 5.6.1]{Tanabe} (see
also A. Pazy \cite{Pazy}), we know that $S\left(\infty\right):\mathcal{D}\to H$
is a bijective operator and, for $u_{\infty}=S\left(\infty\right)^{-1}f_{\infty}\in\mathcal{D}$, we have 
\begin{equation}
\lim_{t\to\infty}\left\Vert u\left(t\right)-u_{\infty}\right\Vert _{H}=0\,\,\,\text{and}\,\,\,\lim_{t\to\infty}\left\Vert \frac{du}{dt}\left(t\right)\right\Vert _{H}=0.\label{eq:u(t)-uinfty}
\end{equation}

As $\frac{du}{dt}\left(t\right)+S\left(t\right)u\left(t\right)=f\left(t\right)$,
we conclude that $\lim_{t\to\infty}\left\Vert u\left(t\right)-u_{\infty}\right\Vert _{\mathcal{D}}=0$.
In fact, we have that 
\begin{eqnarray*}
  \left\Vert u\left(t\right)-u_{\infty}\right\Vert _{\mathcal{D}}
 & = & \left\Vert S\left(t\right)^{-1}f\left(t\right)-S\left(t\right)^{-1}\frac{du}{dt}\left(t\right)-S\left(\infty\right)^{-1}f_{\infty}\right\Vert _{\mathcal{D}}\\
 & \le & \left\Vert S\left(t\right)^{-1}\right\Vert _{\mathcal{B}\left(H,\mathcal{D}\right)}\left\Vert f\left(t\right)-f_{\infty}\right\Vert _{H}+\left\Vert S\left(t\right)^{-1}-S\left(\infty\right)^{-1}\right\Vert _{\mathcal{B}\left(H,\mathcal{D}\right)}\left\Vert f_{\infty}\right\Vert _{H}\\
 &  & +\left\Vert S\left(t\right)^{-1}\right\Vert _{\mathcal{B}\left(H,\mathcal{D}\right)}\left\Vert \frac{du}{dt}\left(t\right)\right\Vert _{H}\to0.
\end{eqnarray*}

Note that the first and last terms on the right hand side of the above inequality
go to zero due to \eqref{eq:u(t)-uinfty}, to the convergence of the functions $f(t)$ and to the uniform boundedness of the set $\left\{S(t)^{-1}\right\}_{t\in \left[0,\infty\right[}$. Also the second one goes to zero,
due to the third and forth items of Definition \ref{def:Tanabe-Sobolevskii conditions}
and the inequality below
\[
\left\Vert S\left(t\right)^{-1}-S\left(\infty\right)^{-1}\right\Vert _{\mathcal{B}\left(H,\mathcal{D}\right)}\le\left\Vert S\left(t\right)^{-1}\right\Vert _{\mathcal{B}\left(H,\mathcal{D}\right)}\left\Vert S\left(t\right)-S\left(\infty\right)\right\Vert _{\mathcal{B}\left(\mathcal{D},H\right)}\left\Vert S\left(\infty\right)^{-1}\right\Vert _{\mathcal{B}\left(H,\mathcal{D}\right)}.
\]
\end{proof}

\subsubsection{The Dirichlet-to-Neumann operator in $H^{-\frac{1}{2}}\left(\partial\Omega\right)$}

The scalar product of $L^2(\partial\Omega)$ allows the definition of the map $y\in H^{\frac{1}{2}}\left(\partial\Omega\right)\mapsto\left(x\in H^{\frac{1}{2}}\left(\partial\Omega\right)\mapsto\left(y,x\right)_{L^{2}\left(\partial\Omega\right)}\in \mathbb{C}\right)\in H^{-\frac{1}{2}}\left(\partial\Omega\right)$. Using this map, we can identify $H^{\frac{1}{2}}\left(\partial\Omega\right)$ as a dense subspace of $H^{-\frac{1}{2}}\left(\partial\Omega\right)$. As always, we assume that Assumption 1 holds.
\begin{thm}\label{thm:teorema11}
The family  $\left\{ A\left(t\right)\in\mathcal{B}\left(H^{\frac{1}{2}}\left(\partial\Omega\right),H^{-\frac{1}{2}}\left(\partial\Omega\right)\right)\right\} _{t\in\left[t_{0},\infty\right]}$ of Dirichlet-to-Neumann operators defined by Definition \ref{defn:DtN} satisfies the Tanabe-Sobolevskii conditions. 
\end{thm}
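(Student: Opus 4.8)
The plan is to apply Definition \ref{def:Tanabe-Sobolevskii conditions} with $H=H^{-\frac12}(\partial\Omega)$ and $\mathcal D=H^{\frac12}(\partial\Omega)$, the dense continuous inclusion $\mathcal D\hookrightarrow H$ being the one through $L^2(\partial\Omega)$ described just above the statement. Condition 4 is already Proposition \ref{cor:uniforAt}, so only conditions 1, 2 and 3 remain. The basic tool is the reduced form on the boundary: for $y,z\in H^{\frac12}(\partial\Omega)$ let $u_t$ and $w_t$ be the functions associated with $y$ and $z$ by Theorem \ref{thm:Existencia Dirichlet-to-Neumann} and Remark \ref{rem:juleu}, so that $\langle A(t)y,z\rangle_{H^{-\frac12}(\partial\Omega)\times H^{\frac12}(\partial\Omega)}=a_t(u_t,w_t)$. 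By Remark \ref{rem:juleu} one has $\|u_t\|_{H^1(\Omega)}\le C\|y\|_{H^{\frac12}(\partial\Omega)}$, and since $\gamma_0$ is bounded also $\|y\|_{H^{\frac12}(\partial\Omega)}=\|\gamma_0(u_t)\|_{H^{\frac12}(\partial\Omega)}\le\|\gamma_0\|\,\|u_t\|_{H^1(\Omega)}$; combining this with Remark \ref{rem:M} and item 3 of Assumption \ref{fact:Assumptionsoftheform} shows that, uniformly in $t\in[t_0,\infty]$, the form $(y,z)\mapsto a_t(u_t,w_t)$ is bounded and $H^{\frac12}$-coercive on $H^{\frac12}(\partial\Omega)$: there are $M',\delta>0$ independent of $t$ with $|a_t(u_t,w_t)|\le M'\|y\|_{H^{\frac12}(\partial\Omega)}\|z\|_{H^{\frac12}(\partial\Omega)}$ and $\text{Re}\,a_t(u_t,u_t)\ge\delta\|y\|_{H^{\frac12}(\partial\Omega)}^2$.

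For condition 1 I would solve $(\lambda-A(t))y=g$, with $g\in H^{-\frac12}(\partial\Omega)$, in the weak sense: find $y\in H^{\frac12}(\partial\Omega)$ with $\lambda(y,z)_{L^2(\partial\Omega)}-a_t(u_t,w_t)=\langle g,z\rangle_{H^{-\frac12}(\partial\Omega)\times H^{\frac12}(\partial\Omega)}$ for all $z\in H^{\frac12}(\partial\Omega)$. For $\text{Re}\,\lambda\le0$ the form $(y,z)\mapsto -\lambda(y,z)_{L^2(\partial\Omega)}+a_t(u_t,w_t)$ has diagonal real part at least $-\text{Re}(\lambda)\|y\|_{L^2(\partial\Omega)}^2+\delta\|y\|_{H^{\frac12}(\partial\Omega)}^2\ge\delta\|y\|_{H^{\frac12}(\partial\Omega)}^2$, hence is coercive uniformly in $t$ and in $\lambda$, and Lax--Milgram yields a unique $y$; thus $\{\text{Re}\,\lambda\le0\}$ lies in the resolvent set of $A(t)$. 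The resolvent bound I would then obtain in two steps. Testing the weak equation with $z=y$ and taking real parts gives $\delta\|y\|_{H^{\frac12}(\partial\Omega)}^2\le\text{Re}\,a_t(u_t,u_t)-\text{Re}(\lambda)\|y\|_{L^2(\partial\Omega)}^2=-\text{Re}\langle g,y\rangle\le\|g\|_{H^{-\frac12}(\partial\Omega)}\|y\|_{H^{\frac12}(\partial\Omega)}$, whence $\|y\|_{H^{\frac12}(\partial\Omega)}\le\delta^{-1}\|g\|_{H^{-\frac12}(\partial\Omega)}$, a bound independent of $\lambda$. For $\lambda\neq0$ I would recover the weaker norm from $(y,z)_{L^2(\partial\Omega)}=\lambda^{-1}\bigl(\langle g,z\rangle+a_t(u_t,w_t)\bigr)$: since $\|y\|_{H^{-\frac12}(\partial\Omega)}=\sup_{\|z\|_{H^{\frac12}(\partial\Omega)}\le1}|(y,z)_{L^2(\partial\Omega)}|$, this gives $\|y\|_{H^{-\frac12}(\partial\Omega)}\le|\lambda|^{-1}(\|g\|_{H^{-\frac12}(\partial\Omega)}+M'\|y\|_{H^{\frac12}(\partial\Omega)})\le|\lambda|^{-1}(1+M'\delta^{-1})\|g\|_{H^{-\frac12}(\partial\Omega)}$. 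Combining this with the trivial bound $\|y\|_{H^{-\frac12}(\partial\Omega)}\le c\,\|y\|_{H^{\frac12}(\partial\Omega)}\le c\delta^{-1}\|g\|_{H^{-\frac12}(\partial\Omega)}$, valid for every $\lambda$, produces $\|(\lambda-A(t))^{-1}\|_{\mathcal B(H^{-\frac12}(\partial\Omega))}\le C/(1+|\lambda|)$ with $C$ independent of $t$, which is condition 1.

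For conditions 2 and 3 I would estimate $A(t)-A(s)$ by writing $\langle(A(t)-A(s))y,z\rangle=a_t(u_t,\mathcal E(z))-a_s(u_s,\mathcal E(z))=a_t(u_t-u_s,\mathcal E(z))+(a_t-a_s)(u_s,\mathcal E(z))$, where $u_t,u_s$ are the solutions associated with $y$ at times $t,s$. The second term is controlled directly by a constant times $\bigl(\|a_{ij}(t,\cdot)-a_{ij}(s,\cdot)\|_{L^\infty(\Omega)}+\dots+\|d(t,\cdot)-d(s,\cdot)\|_{L^\infty(\Omega)}\bigr)\|u_s\|_{H^1(\Omega)}\|\mathcal E(z)\|_{H^1(\Omega)}$, hence by $C|t-s|^\alpha\|y\|_{H^{\frac12}(\partial\Omega)}\|z\|_{H^{\frac12}(\partial\Omega)}$ using Remark \ref{rem:juleu} and item 1 of Assumption \ref{fact:Assumptionsoftheform}. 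For the first term, since $P(t,x,D)u_t=0=P(s,x,D)u_s$ one has $a_t(u_t,\phi)=a_s(u_s,\phi)=0$ for $\phi\in H_0^1(\Omega)$, and as $u_t-u_s\in H_0^1(\Omega)$ it follows that $a_t(u_t-u_s,\phi)=-(a_t-a_s)(u_s,\phi)$ for all $\phi\in H_0^1(\Omega)$; taking $\phi=u_t-u_s$ and using coercivity gives $\|u_t-u_s\|_{H^1(\Omega)}\le C|t-s|^\alpha\|y\|_{H^{\frac12}(\partial\Omega)}$, so this term is likewise $O(|t-s|^\alpha)\|y\|_{H^{\frac12}(\partial\Omega)}\|z\|_{H^{\frac12}(\partial\Omega)}$. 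Taking the supremum over $\|z\|_{H^{\frac12}(\partial\Omega)}\le1$ yields $\|A(t)-A(s)\|_{\mathcal B(H^{\frac12}(\partial\Omega),H^{-\frac12}(\partial\Omega))}\le C|t-s|^\alpha$, which is condition 2. Condition 3 follows from the identical computation with $s$ replaced by $\infty$, using the $L^\infty(\Omega)$-convergence of the coefficients in item 4 of Assumption \ref{fact:Assumptionsoftheform} in place of their H\"older continuity. I expect the resolvent estimate of condition 1 to be the main obstacle, the delicate point being the passage from the $\lambda$-independent bound in the form space $H^{\frac12}(\partial\Omega)$ to the $1/|\lambda|$ decay measured in the weaker norm of $H^{-\frac12}(\partial\Omega)$.
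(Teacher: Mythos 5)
Your proposal is correct. The overall skeleton coincides with the paper's: both verify the four conditions of Definition \ref{def:Tanabe-Sobolevskii conditions} with $H=H^{-\frac12}(\partial\Omega)$, $\mathcal D=H^{\frac12}(\partial\Omega)$, and both handle condition 1 in essentially the same way — a $\lambda$-perturbed coercive form plus Lax--Milgram for invertibility, the test $z=y$ for the $\lambda$-independent $H^{\frac12}$ bound, and the equation itself to trade that bound for $|\lambda|^{-1}$ decay in the $H^{-\frac12}$ norm (the paper packages your two steps into the single estimate for $(1+|\lambda|)|\langle\gamma_0(u_t),z\rangle|$). Where you genuinely diverge is in conditions 2 and 3. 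The paper never touches $u_t-u_s$ directly: it proves $\|\mathcal B_{s,N}-\mathcal B_{t,N}\|\le C|t-s|^\alpha$ at the level of the forms on $H^1(\Omega)$, transfers this to the inverses via $\mathcal B_{t,N}^{-1}-\mathcal B_{s,N}^{-1}=\mathcal B_{s,N}^{-1}(\mathcal B_{s,N}-\mathcal B_{t,N})\mathcal B_{t,N}^{-1}$, uses $A(t)^{-1}=\gamma_0\circ\mathcal B_{t,N}^{-1}\circ k$, and finally recovers $A(t)-A(s)=A(s)\bigl(A(s)^{-1}-A(t)^{-1}\bigr)A(t)$. You instead estimate $\langle(A(t)-A(s))y,z\rangle$ directly by splitting off $(a_t-a_s)(u_s,\mathcal E(z))$ and controlling $\|u_t-u_s\|_{H^1(\Omega)}$ through coercivity on $H^1_0(\Omega)$; this is a more elementary and self-contained argument that avoids invoking the invertibility of $A(t)$ in the H\"older estimate. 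What the paper's detour buys is reusable machinery: the H\"older continuity of $t\mapsto\mathcal B_{t,N}^{-1}$, and hence of $t\mapsto A(t)^{-1}=\gamma_0\circ\mathcal B_{t,N}^{-1}\circ k$, is exactly what is cited again in step $(3)$ of the proof of Theorem \ref{thm:Yagi}, so if you adopt your direct route you would need to record that auxiliary fact separately for later use.
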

\begin{proof}
Let us check all conditions of Definition \ref{def:Tanabe-Sobolevskii conditions}.

1. We define the form $a_{t,\lambda}:H^{1}\left(\Omega\right)\times H^{1}\left(\Omega\right)\to\mathbb{C}$
by 
\[a_{t,\lambda}\left(u,v\right)=a_{t}\left(u,v\right)-\lambda\left\langle \gamma_{0}\left(u\right),\gamma_{0}\left(v\right)\right\rangle _{H^{-\frac{1}{2}}\left(\partial\Omega\right)\times H^{\frac{1}{2}}\left(\partial\Omega\right)}.
\]
The form $a_{t,\lambda}$ is continuous and, as $Re\left(\lambda\right)\le0$, it satisfies $\text{Re}\left(a_{t,\lambda}\left(u,u\right)\right)\ge C\left\Vert u\right\Vert _{H^{1}\left(\Omega\right)}^{2}$.
Hence, by the Lax-Milgram Theorem, there is an isometry $\mathcal{B}_{t,\lambda,N}:H^{1}\left(\Omega\right)\to H^{1}\left(\Omega\right)^{*}$
such that
\[
a_{t,\lambda}\left(u,v\right)=\left\langle \mathcal{B}_{t,\lambda,N}\left(u\right),v\right\rangle _{H^{1}\left(\Omega\right)^{*}\times H^{1}\left(\Omega\right)}, u,v\in H^1 (\Omega).
\]
Using this form, we conclude that $A\left(t\right)-\lambda$ is invertible
and that $\left(A\left(t\right)-\lambda\right)^{-1}=\gamma_{0}\circ\mathcal{B}_{t,\lambda,N}^{-1}\circ k$, where $k$ is the map defined in Proposition \ref{lem:Dirichlet-Problem}. In fact, using the characterization provided by Theorem \ref{thm:Existencia Dirichlet-to-Neumann}, we have
\begin{eqnarray*}
 &  & y\in H^{\frac{1}{2}}\left(\partial\Omega\right),\,\left(A\left(t\right)-\lambda\right)y=f\\
 & \iff & \exists u_{t}\in H^{1}\left(\Omega\right)\,s.t.\,\gamma_{0}\left(u_{t}\right)=y\,\text{and}\,a_{t,\lambda}\left(u_{t},v\right)=\left\langle f,\gamma_{0}\left(v\right)\right\rangle _{H^{-\frac{1}{2}}\left(\partial\Omega\right)\times H^{\frac{1}{2}}\left(\partial\Omega\right)},v\in H^{1}\left(\Omega\right).\\
 & \iff & \exists u_{t}\in H^{1}\left(\Omega\right)\,s.t.\,\gamma_{0}\left(u_{t}\right)=y\,\text{and}\,\mathcal{B}_{t,\lambda,N}\left(u_{t}\right)=k\left(f\right)\iff y=\gamma_{0}\circ\mathcal{B}_{t,\lambda,N}^{-1}\circ k\left(f\right).
\end{eqnarray*}

Now suppose that $\left(A\left(t\right)-\lambda\right)y=f$. Then there is a unique $u_{t}\in H^{1}\left(\Omega\right)$
such that $\gamma_{0}\left(u_{t}\right)=y$ and 
\begin{equation}
a_{t,\lambda}\left(u_{t},v\right)=\left\langle f,\gamma_{0}\left(v\right)\right\rangle _{H^{-\frac{1}{2}}\left(\partial\Omega\right)\times H^{\frac{1}{2}}\left(\partial\Omega\right)},\,v\in H^{1}\left(\Omega\right).\label{eq:Aminuslambdaequalf}
\end{equation}
Setting $v=u_{t}$ in Equation \eqref{eq:Aminuslambdaequalf} and recalling that $\text{Re}(\lambda)\le0$, we obtain that
\begin{equation}
\begin{array}{rcl}
C\|u_{t}\|_{H^{1}(\Omega)}^{2}&\le& \,\text{Re}\,a_{t}\left(u_{t},u_{t}\right)\le \,\text{Re}\left\langle f,\gamma_{0}\left(u_{t}\right)\right\rangle _{H^{-\frac{1}{2}}\left(\partial\Omega\right)\times H^{\frac{1}{2}}\left(\partial\Omega\right)}
\\
&\le& \left\Vert f\right\Vert _{H^{-\frac{1}{2}}\left(\partial\Omega\right)}\left\Vert \gamma_{0}\left(u_{t}\right)\right\Vert _{H^{\frac{1}{2}}\left(\partial\Omega\right)}.
\end{array}\label{eq620}
\end{equation}

Equation (\ref{eq620}) and the boundedness of $\gamma_{0}:H^{1}\left(\Omega\right)\to H^{\frac{1}{2}}\left(\partial\Omega\right)$
imply that
\begin{equation}
\left\Vert \gamma_{0}\left(u_{t}\right)\right\Vert _{H^{\frac{1}{2}}\left(\partial\Omega\right)}\le C\left\Vert \gamma_{0}\right\Vert _{\mathcal{B}\left(H^{1}\left(\Omega\right),H^{\frac{1}{2}}\left(\partial\Omega\right)\right)}^{2}\left\Vert f\right\Vert _{H^{-\frac{1}{2}}\left(\partial\Omega\right)}.\label{eq:julef}
\end{equation}

The Equations (\ref{eq:Aminuslambdaequalf}) and \eqref{eq620}
show us that, for all $z\in H^{\frac{1}{2}}\left(\partial\Omega\right)$, we have
\begin{eqnarray*}
 &  & \left(1+\left|\lambda\right|\right)\left|\left\langle \gamma_{0}\left(u_{t}\right),z\right\rangle _{H^{-\frac{1}{2}}\left(\partial\Omega\right)\times H^{\frac{1}{2}}\left(\partial\Omega\right)}\right|\\
 & \le & \left|a_{t}\left(u_{t},\mathcal{E}\left(z\right)\right)\right|+\left|\left\langle f,z\right\rangle _{H^{-\frac{1}{2}}\left(\partial\Omega\right)\times H^{\frac{1}{2}}\left(\partial\Omega\right)}\right|+\left|\left\langle \gamma_{0}\left(u_{t}\right),z\right\rangle _{H^{-\frac{1}{2}}\left(\partial\Omega\right)\times H^{\frac{1}{2}}\left(\partial\Omega\right)}\right|\\
 & \le & M\left\Vert u_{t}\right\Vert _{H^{1}\left(\Omega\right)}\left\Vert \mathcal{E}\left(z\right)\right\Vert _{H^{1}\left(\Omega\right)}+\left(\left \Vert f\right\Vert _{H^{-\frac{1}{2}}\left(\partial\Omega\right)}+\left\Vert \gamma_{0}\left(u_{t}\right)\right\Vert _{H^{-\frac{1}{2}}\left(\partial\Omega\right)}\right)\left\Vert z\right\Vert _{H^{\frac{1}{2}}\left(\partial\Omega\right)}.
\end{eqnarray*}

Using the Remark \ref{rem:juleu}, the boundedness of $\mathcal{E}$
and Equation (\ref{eq:julef}), we conclude that 
\begin{equation}
\left|\left\langle \gamma_{0}\left(u_{t}\right),z\right\rangle _{H^{-\frac{1}{2}}\left(\partial\Omega\right)\times H^{\frac{1}{2}}\left(\partial\Omega\right)}\right|\le\frac{C}{1+\left|\lambda\right|}\left\Vert f\right\Vert _{H^{-\frac{1}{2}}\left(\Omega\right)}\left\Vert z\right\Vert _{H^{\frac{1}{2}}\left(\partial\Omega\right)},\,\forall z\in H^{\frac{1}{2}}\left(\partial\Omega\right).\label{eq:julef/1+lambda}
\end{equation}

The above expression implies that 
\[
\left\Vert \left(A\left(t\right)-\lambda I\right)^{-1}f\right\Vert _{H^{-\frac{1}{2}}\left(\partial\Omega\right)}\le\frac{C}{1+\left|\lambda\right|}\left\Vert f\right\Vert _{H^{-\frac{1}{2}}\left(\partial\Omega\right)}.
\]

2. First, we prove that $\left\Vert \mathcal{B}_{s,N}-\mathcal{B}_{t,N}\right\Vert _{\mathcal{B}\left(H^{1}\left(\Omega\right),H^{1}\left(\Omega\right)^{*}\right)}\le C\left|t-s\right|^{\alpha}$.
Indeed, due to Assumptions \ref{fact:Assumptionsoftheform},
we have 
\[
\left|\left\langle \mathcal{B}_{s,N}\left(u\right)-\mathcal{B}_{t,N}\left(u\right),v\right\rangle _{H^{1}\left(\Omega\right)^{*}\times H^{1}\left(\Omega\right)}\right|=\left|a_{s}\left(u,v\right)-a_{t}\left(u,v\right)\right|\le C\left|t-s\right|^{\alpha}\left\Vert u\right\Vert _{H^{1}\left(\Omega\right)}\left\Vert v\right\Vert _{H^{1}\left(\Omega\right)}.
\]

Second, we show that $\left\Vert \mathcal{B}_{s,N}^{-1}-\mathcal{B}_{t,N}^{-1}\right\Vert _{\mathcal{B}\left(H^{1}\left(\Omega\right)^{*},H^{1}\left(\Omega\right)\right)}\le C\left|t-s\right|^{\alpha}$. This follows from:
\begin{eqnarray}
& &\left\Vert \mathcal{B}_{t,N}^{-1}-\mathcal{B}_{s,N}^{-1}\right\Vert _{\mathcal{B}\left(H^{1}\left(\Omega\right)^{*},H^{1}\left(\Omega\right)\right)}=\left\Vert \mathcal{B}_{s,N}^{-1}\left(\mathcal{B}_{s,N}-\mathcal{B}_{t,N}\right)\mathcal{B}_{t,N}^{-1}\right\Vert _{\mathcal{B}\left(H^{1}\left(\Omega\right)^{*},H^{1}\left(\Omega\right)\right)}
\nonumber\\
&\le&
\left\Vert \mathcal{B}_{s,N}^{-1}\right\Vert _{\mathcal{B}\left(H^{1}\left(\Omega\right)^{*},H^{1}\left(\Omega\right)\right)}\left\Vert \mathcal{B}_{s,N}-\mathcal{B}_{t,N}\right\Vert _{\mathcal{B}\left(H^{1}\left(\Omega\right),H^{1}\left(\Omega\right)^{*}\right)}\left\Vert \mathcal{B}_{t,N}^{-1}\right\Vert _{\mathcal{B}\left(H^{1}\left(\Omega\right)^{*},H^{1}\left(\Omega\right)\right)}.
\nonumber
\end{eqnarray}

Finally, the uniform boundedness of the family of operators $\left\{A\left(t\right)\in\mathcal{B}\left(H^{\frac{1}{2}}\left(\partial\Omega\right),H^{-\frac{1}{2}}\left(\partial\Omega\right)\right)\right\}_{t\in\left[t_{0},\infty\right]}$ and the fact that $A\left(t\right)^{-1}=\gamma_{0}\circ\mathcal{B}_{t,N}^{-1}\circ k$ imply the second condition of Definition \ref{def:Tanabe-Sobolevskii conditions}, due to

\[
\left\Vert A\left(t\right)-A\left(s\right)\right\Vert _{\mathcal{B}\left(H^{\frac{1}{2}}\left(\partial\Omega\right),H^{-\frac{1}{2}}\left(\partial\Omega\right)\right)}=\left\Vert A\left(s\right)\left(A\left(s\right)^{-1}-A\left(t\right)^{-1}\right)A\left(t\right)\right\Vert _{\mathcal{B}\left(H^{\frac{1}{2}}\left(\partial\Omega\right),H^{-\frac{1}{2}}\left(\partial\Omega\right)\right)}.
\]

3. The Assumption 1 implies that $\lim_{t\to\infty}\left(\sup_{\left\Vert u\right\Vert _{H^{1}\left(\Omega\right)}=\left\Vert v\right\Vert _{H^{1}\left(\Omega\right)}=1}\left|a_{t}\left(u,v\right)-a_{\infty}\left(u,v\right)\right|\right)=0$. The proof then follows the same arguments of the second item.

4. The forth condition follows from Proposition \ref{cor:uniforAt}.
\end{proof}
\begin{cor}\label{cor:dthhmeio}
Let $f\in C_{u}^{\alpha}\left(\left[t_{0},\infty\right[,H^{-\frac{1}{2}}\left(\partial\Omega\right)\right)$
and $A\left(t\right):H^{\frac{1}{2}}\left(\partial\Omega\right)\subset H^{-\frac{1}{2}}\left(\partial\Omega\right)\to H^{-\frac{1}{2}}\left(\partial\Omega\right)$
be the Dirichlet-to-Neumann operators associated with the forms $\left\{ a_{t}:H^{1}\left(\Omega\right)\times H^{1}\left(\Omega\right)\to\mathbb{C}\right\} _{t\in\left[t_{0},\infty\right]}$ defined by \eqref{eq:forms}
and satisfying Assumption \ref{fact:Assumptionsoftheform}. 

Then, for every $u_{0}\in H^{-\frac{1}{2}}\left(\partial\Omega\right)$,
there is a unique function $u\in C\left(\left[t_{0},\infty\right[,H^{-\frac{1}{2}}\left(\partial\Omega\right)\right)\cap C^{1}\left(\left]t_{0},\infty\right[,H^{-\frac{1}{2}}\left(\partial\Omega\right)\right)\cap C\left(\left]t_{0},\infty\right[,H^{\frac{1}{2}}\left(\partial\Omega\right)\right)$
that solves Problem \eqref{eq:mainproblem}.

Moreover, $A(\infty):H^{\frac{1}{2}}(\partial\Omega)\to H^{-\frac{1}{2}}(\partial\Omega)$ is invertible and if $\lim_{t\to\infty}f\left(t\right)=f_{\infty}\in H^{-\frac{1}{2}}\left(\partial\Omega\right)$,
then $\lim_{t\to\infty}\left\Vert u\left(t\right)-u_{\infty}\right\Vert _{H^{\frac{1}{2}}\left(\partial\Omega\right)}=0$, where $u_{\infty}$ is the unique solution of $A(\infty)u_{\infty}=f_{\infty}$. 
\end{cor}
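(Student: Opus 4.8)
The plan is to recognize that this corollary is a direct specialization of the abstract Tanabe--Sobolevskii result, Theorem \ref{thm:teorema10}, to the concrete Dirichlet-to-Neumann setting. Concretely, I would take $H = H^{-\frac{1}{2}}(\partial\Omega)$ and $\mathcal{D} = H^{\frac{1}{2}}(\partial\Omega)$, with the dense continuous embedding $\mathcal{D} \hookrightarrow H$ furnished by the $L^2(\partial\Omega)$ pairing described in the paragraph preceding Theorem \ref{thm:teorema11}, and set $S(t) = A(t)$ for $t \in [t_0,\infty]$. Under this identification, the operator $A(t) \in \mathcal{B}(H^{\frac{1}{2}}(\partial\Omega), H^{-\frac{1}{2}}(\partial\Omega))$ is viewed as an unbounded operator on $H = H^{-\frac{1}{2}}(\partial\Omega)$ with domain $\mathcal{D} = H^{\frac{1}{2}}(\partial\Omega)$, which is exactly the framework of Definition \ref{def:Tanabe-Sobolevskii conditions}.

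The key input is Theorem \ref{thm:teorema11}, which asserts precisely that this family $\{A(t)\}_{t\in[t_0,\infty]}$ satisfies the Tanabe--Sobolevskii conditions. With this verified, existence and uniqueness of a solution
\[
u \in C\left(\left[t_{0},\infty\right[, H^{-\frac{1}{2}}(\partial\Omega)\right) \cap C^{1}\left(\left]t_{0},\infty\right[, H^{-\frac{1}{2}}(\partial\Omega)\right) \cap C\left(\left]t_{0},\infty\right[, H^{\frac{1}{2}}(\partial\Omega)\right)
\]
to Problem \eqref{eq:mainproblem} for arbitrary initial data $u_0 \in H^{-\frac{1}{2}}(\partial\Omega)$ follows immediately by applying Theorem \ref{thm:teorema10} to the data $f \in C_{u}^{\alpha}\left(\left[t_{0},\infty\right[, H^{-\frac{1}{2}}(\partial\Omega)\right)$. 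No further work is needed here, since the regularity classes in the statement are identical to those produced by Theorem \ref{thm:teorema10}.

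For the asymptotic part, invertibility of $A(\infty) : H^{\frac{1}{2}}(\partial\Omega) \to H^{-\frac{1}{2}}(\partial\Omega)$ is already guaranteed by Proposition \ref{cor:uniforAt} (and is also reasserted as part of Theorem \ref{thm:teorema10}), so $u_\infty := A(\infty)^{-1} f_\infty$ is well defined in $\mathcal{D}$. Then the convergence conclusion of Theorem \ref{thm:teorema10}, namely $\lim_{t\to\infty}\lVert u(t) - u_\infty \rVert_{\mathcal{D}} = 0$, translates directly into $\lim_{t\to\infty}\lVert u(t) - u_\infty \rVert_{H^{\frac{1}{2}}(\partial\Omega)} = 0$, which is what is claimed. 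I do not expect any genuine obstacle: the only points requiring a word of care are that the $L^2(\partial\Omega)$-based identification of $H^{\frac{1}{2}}(\partial\Omega)$ as a dense subspace of $H^{-\frac{1}{2}}(\partial\Omega)$ is the same one under which $A(t)$ is interpreted as the unbounded generator, and that the hypotheses of Assumption \ref{fact:Assumptionsoftheform} (in particular items 2 and 4, governing the limit at infinity) are exactly those consumed by Theorem \ref{thm:teorema11} to verify conditions 3 and 4 of Definition \ref{def:Tanabe-Sobolevskii conditions}. Once these identifications are recorded, the corollary is a one-line invocation of the two preceding theorems.
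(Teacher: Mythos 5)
Your proposal is correct and follows exactly the paper's route: the paper's own proof is the one-line observation that the corollary follows from Theorems \ref{thm:teorema10} and \ref{thm:teorema11}, which is precisely the specialization $H = H^{-\frac{1}{2}}(\partial\Omega)$, $\mathcal{D} = H^{\frac{1}{2}}(\partial\Omega)$, $S(t) = A(t)$ that you spell out. The extra care you take with the $L^{2}(\partial\Omega)$-based embedding and with matching the regularity classes is a faithful (and slightly more explicit) rendering of the same argument.
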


\begin{proof}
It is a consequence of Theorem \ref{thm:teorema10} and \ref{thm:teorema11}.
\end{proof}

\subsubsection{The Yagi conditions}\label{sec:2.1.3}

It is natural to consider the operator Dirichlet-to-Neumann acting
on functions instead of distribution spaces. In order to study the Dirichlet-to-Neumann problem in $L^2(\partial\Omega)$, we will apply the results of A. Yagi \cite[Chapter 3]{Yagibook}. Let us recall them in this section. We fix a complex Hilbert space $H$.

\begin{defn}\label{def:Yagiconditions} We say that a family $\left\{ S\left(t\right):\mathcal{D}\left(S\left(t\right)\right)\subset H\to H\right\} _{t\in\left[t_{0},\infty\right]}$ of closed and densely defined operators  satisfies the Yagi conditions if there exist constants $M\ge1$,
$0<\nu\le1$, $0<\alpha\le1$ with $\alpha+\nu>1$, such that

1) The set $\left\{ \lambda\in\mathbb{C},\,\text{Re}\left(\lambda\right)\le0\right\} $
is contained in the resolvent set of the linear operator $S(t):\mathcal{D}(S(t))\subset H\to H$,
$t\in\left[t_{0},\infty\right]$, and
\[
\left\Vert \left(\lambda-S\left(t\right)\right)^{-1}\right\Vert _{\mathcal{B}\left(H\right)}\le\frac{M}{1+\left|\lambda\right|},\,\text{Re}\left(\lambda\right)\le0,\,t\in\left[t_{0},\infty\right[.
\]

2) For all $t,s\in\left[t_{0},\infty\right]$, we have $\mathcal{D}\left(S\left(s\right)\right)\subset\mathcal{D}\left(S\left(t\right)^{\nu}\right)$
and 

\[
\left\Vert S\left(t\right)^{\nu}\left(S\left(t\right)^{-1}-S\left(s\right)^{-1}\right)\right\Vert _{\mathcal{B}\left(H\right)}\le C\left|t-s\right|^{\alpha}.
\]
\end{defn}
\begin{rem}
If $\nu=1$, then the item 2) implies that the domains of the operators
are constant. Moreover, if $\left\Vert S\left(t\right)\right\Vert _{\mathcal{B}\left(H\right)}$
and $\left\Vert S\left(t\right)^{-1}\right\Vert _{\mathcal{B}\left(H\right)}$
are uniformily bounded, then it is equivalent to condition 2 of
Definition \ref{def:Tanabe-Sobolevskii conditions}.
\end{rem}

The following theorem of A. Yagi can be found in \cite[Theorem 3.9 on page 147]{Yagibook}.
\begin{thm}
(Yagi) Let $f\in C_{u}^{\alpha}\left(\left[t_{0},\infty\right[,H\right)$ and $\left\{ S\left(t\right):\mathcal{D}\left(S\left(t\right)\right)\subset H\to H\right\}$ be a set of operators that satisfy the Yagi conditions. Then, for every $u_{0}\in H$, there is a unique function $u\in C\left(\left[t_{0},\infty\right[,H\right)\cap C^{1}\left(\left]t_{0},\infty\right[,H\right)$
such that $u\left(t\right)\in\mathcal{D}\left(S\left(t\right)\right)$,
for all $t\in\left]t_{0},\infty\right[$, and 
\[
\begin{array}{c}
\frac{du}{dt}\left(t\right)+S\left(t\right)u\left(t\right)=f\left(t\right)\\
u\left(t_{0}\right)=u_{0}
\end{array}.
\]
\end{thm}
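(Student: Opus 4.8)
The plan is to obtain the solution from the fundamental solution (evolution family) of the non-autonomous equation, following Yagi's construction for abstract parabolic problems with time-dependent domains, and then to recover $u$ by Duhamel's formula. The target is to build operators $\{U(t,s)\}_{t_0\le s\le t}$ with $\frac{\partial U}{\partial t}(t,s)=-S(t)U(t,s)$ and $U(s,s)=I$, and to set
\[
u(t)=U(t,t_0)u_0+\int_{t_0}^{t}U(t,s)f(s)\,ds .
\]

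First I would note that condition~1 of Definition~\ref{def:Yagiconditions} states precisely that each $-S(t)$ is sectorial with a resolvent bound on $\{\mathrm{Re}\,\lambda\le 0\}$ that is uniform in $t$. Hence each $-S(t)$ generates a bounded analytic semigroup $\{e^{-\tau S(t)}\}_{\tau\ge 0}$ on $H$, and the smoothing estimates $\|e^{-\tau S(t)}\|_{\mathcal{B}(H)}\le M'$ and $\|S(t)e^{-\tau S(t)}\|_{\mathcal{B}(H)}\le M'/\tau$ hold uniformly in $t\in[t_0,\infty[$. These frozen-coefficient semigroups are the building blocks of the construction.

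Next I would construct $U(t,s)$ by the successive-approximation (Volterra) method, writing $U(t,s)=e^{-(t-s)S(s)}+\int_s^t e^{-(t-r)S(r)}\Phi(r,s)\,dr$, where $\Phi$ solves a Volterra integral equation whose kernel $R(t,s)=(S(t)-S(s))e^{-(t-s)S(s)}$ measures the mismatch between the true generator $S(r)$ and the frozen one $S(s)$. The decisive point is that this kernel is only weakly singular as $r\to s$: combining condition~2 in its inverse form $\|S(t)^{\nu}(S(t)^{-1}-S(s)^{-1})\|_{\mathcal{B}(H)}\le C|t-s|^{\alpha}$ with the analytic smoothing estimate produces a bound of order $(r-s)^{\alpha+\nu-2}$, which is integrable near $r=s$ precisely because $\alpha+\nu>1$. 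This is exactly where the balance condition is consumed, and it guarantees convergence of the iterated kernels and hence the existence of $U$.

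Finally I would verify that the $u$ defined above lies in $C([t_0,\infty[,H)\cap C^1(]t_0,\infty[,H)$, that $u(t)\in\mathcal{D}(S(t))$ for $t>t_0$, and that it satisfies the equation with $u(t_0)=u_0$; the regularity of the Duhamel term uses the $\alpha$-H\"older continuity of $f$ together with parabolic smoothing. Uniqueness follows by taking the difference $w$ of two solutions, which solves the homogeneous problem with $w(t_0)=0$, and showing $w\equiv 0$ via the evolution family (equivalently, a Gronwall-type a~priori estimate). The main obstacle throughout is that the domains $\mathcal{D}(S(t))$ vary with $t$; this is why the hypotheses are phrased through the bounded inverses $S(t)^{-1}$ and the fractional power $S(t)^{\nu}$ rather than as H\"older continuity of $t\mapsto S(t)$ in a fixed operator norm, as in the constant-domain Tanabe--Sobolevskii setting. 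Controlling the weakly singular kernel and showing that the constructed $u(t)$ genuinely lands in the moving domain $\mathcal{D}(S(t))$ is the technical heart of the argument.
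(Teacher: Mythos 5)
The paper does not actually prove this statement: it is quoted verbatim from Yagi's book (\cite{Yagibook}, Theorem~3.9, p.~147), so there is no in-paper argument to compare against. Your outline is the right general strategy and is indeed the one Yagi uses --- a parametrix built from the frozen-coefficient analytic semigroups, a weakly singular Volterra kernel of order $(t-s)^{\alpha+\nu-2}$ whose integrability is exactly where $\alpha+\nu>1$ is consumed, and Duhamel's formula.

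However, the specific decomposition you write down would fail, and for precisely the reason you yourself flag at the end. The kernel $R(t,s)=(S(t)-S(s))e^{-(t-s)S(s)}$ is the constant-domain Sobolevskii--Tanabe kernel: applying $S(t)$ to $e^{-(t-s)S(s)}x$ requires $e^{-(t-s)S(s)}x\in\mathcal{D}(S(t))$, but condition~2 of Definition~\ref{def:Yagiconditions} only gives $\mathcal{D}(S(s))\subset\mathcal{D}(S(t)^{\nu})$ with $\nu$ possibly strictly less than $1$, so $S(t)-S(s)$ is not defined on the range of the parametrix. Nor does the naive factorization $S(t)^{1-\nu}\bigl[S(t)^{\nu}(S(t)^{-1}-S(s)^{-1})\bigr]S(s)e^{-(t-s)S(s)}$ repair this, since the bracketed operator only maps into $H$ and $S(t)^{1-\nu}$ is unbounded there. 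The same problem afflicts your leading term: $e^{-(t-s)S(s)}u_0$ need not lie in $\mathcal{D}(S(t))$, so the claim $u(t)\in\mathcal{D}(S(t))$ cannot be read off from a parametrix frozen at the left endpoint. The correct construction (as in \cite{Yagibook}) freezes the parametrix at the evaluation time, $e^{-(t-s)S(t)}$, and defines the correction kernel through the Dunford integral of $\partial_t(\lambda-S(t))^{-1}$, i.e.\ through differences of resolvents rather than of the operators themselves; there the unbounded factor $S(t)^{1-\nu}$ is absorbed by $(\lambda-S(t))^{-1}$ via the moment inequality, yielding $\|R_1(t,s)\|_{\mathcal{B}(H)}\le C(t-s)^{\alpha+\nu-2}$ legitimately. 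With that substitution your outline becomes the standard proof; as written, the key operator in it is undefined.
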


\subsubsection{The Dirichlet-to-Neumann operator in
$L^{2}\left(\partial\Omega\right)$.\label{subsec:dtnl2}}\label{sec:2.1.4}

In this section, we assume that the stronger Assumption 2 holds, that is, $\alpha\in \left]\frac{1}{2},1\right]$. Our aim is to study the Dirichlet-to-Neumann operator on $L^2(\Omega)$ according to W. Arendt and A. Elst \cite{Arendtelst,Arendtformmethods}.
For each $t\in\left[t_{0},\infty\right]$, we set 
$$\left.A\left(t\right)\right|_{L^{2}\left(\partial\Omega\right)}:\mathcal{D}\left(\left.A\left(t\right)\right|_{L^{2}\left(\partial\Omega\right)}\right)\subset L^{2}\left(\partial\Omega\right)\to L^{2}\left(\partial\Omega\right)$$
as the part of our previous operators $A\left(t\right)$
in $L^{2}\left(\partial\Omega\right)$. It is the operator
$A\left(t\right)$ acting on the domain
\[
\mathcal{D}\left(\left.A\left(t\right)\right|_{L^{2}\left(\partial\Omega\right)}\right)=\left\{ y\in H^{\frac{1}{2}}\left(\partial\Omega\right);\,A\left(t\right)y\in L^{2}\left(\partial\Omega\right)\right\} .
\]

The part of $A\left(t\right)$ in $H^{\frac{1}{2}}\left(\partial\Omega\right)$,
that is, the operator 
$$\left.A\left(t\right)\right|_{H^{\frac{1}{2}}\left(\partial\Omega\right)}:\mathcal{D}\left(\left.A\left(t\right)\right|_{H^{\frac{1}{2}}\left(\partial\Omega\right)}\right)\subset H^{\frac{1}{2}}\left(\partial\Omega\right)\to H^{\frac{1}{2}}\left(\partial\Omega\right)$$
defined as the operator $A\left(t\right)$ acting on 
\[
\mathcal{D}\left(\left.A\left(t\right)\right|_{H^{\frac{1}{2}}\left(\partial\Omega\right)}\right)=\left\{ y\in H^{\frac{1}{2}}\left(\partial\Omega\right),\,A\left(t\right)y\in H^{\frac{1}{2}}\left(\partial\Omega\right)\right\} ,
\]
is also useful in our analysis.

Notice that, for all $t\in\left[t_{0},\infty\right]$, the operators $\left.A\left(t\right)\right|_{L^{2}\left(\partial\Omega\right)}$
and $\left.A\left(t\right)\right|_{H^{\frac{1}{2}}\left(\partial\Omega\right)}$
are densely defined. In fact, $A\left(t\right)^{-1}:H^{-\frac{1}{2}}\left(\partial\Omega\right)\to H^{\frac{1}{2}}\left(\partial\Omega\right)$
is an isomorphism, $H^{\frac{1}{2}}\left(\partial\Omega\right)$ is
dense in $L^{2}\left(\partial\Omega\right)$ and in $H^{-\frac{1}{2}}\left(\partial\Omega\right)$, $\mathcal{D}\left(\left.A\left(t\right)\right|_{L^{2}\left(\partial\Omega\right)}\right)=A\left(t\right)^{-1}\left(L^{2}\left(\partial\Omega\right)\right)$
and $\mathcal{D}\left(\left.A\left(t\right)\right|_{H^{\frac{1}{2}}\left(\partial\Omega\right)}\right)=A\left(t\right)^{-1}\left(H^{\frac{1}{2}}\left(\partial\Omega\right)\right)$.
\begin{prop}
\label{prop:L2sectorial}For all $\text{Re}(\lambda)\le0$ and $t\in\left[t_{0},\infty\right]$,
the operators 
$$
\left.A\left(t\right)\right|_{L^{2}\left(\partial\Omega\right)}-\lambda
\quad  \textrm{ and } \quad 
\left.A\left(t\right)\right|_{H^{\frac{1}{2}}\left(\partial\Omega\right)}-\lambda
$$
are invertible. Moreover, there is a constant $C>0$ such that 
$$\left\Vert \left(\left.A\left(t\right)\right|_{L^{2}\left(\partial\Omega\right)}-\lambda\right)^{-1}\right\Vert _{\mathcal{B}\left(L^{2}\left(\partial\Omega\right)\right)}\le\frac{C}{1+\left|\lambda\right|}$$
and 
$$\left\Vert \left(\left.A\left(t\right)\right|_{H^{\frac{1}{2}}\left(\partial\Omega\right)}-\lambda\right)^{-1}\right\Vert _{\mathcal{B}\left(H^{\frac{1}{2}}\left(\partial\Omega\right)\right)}\le\frac{C}{1+\left|\lambda\right|}$$
for all $\text{Re}(\lambda)\le0$ and $t\in\left[t_{0},\infty\right]$.
\end{prop}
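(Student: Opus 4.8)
The plan is to derive both estimates from the $H^{-\frac{1}{2}}$ resolvent bound already established in the proof of Theorem~\ref{thm:teorema11} (its first condition), handling the $L^2$ case by a direct form computation and the $H^{\frac{1}{2}}$ case by duality. Throughout I set $\sigma=-\text{Re}\,\lambda\ge0$ and $\tau=\text{Im}\,\lambda$, and I use the perturbed form $a_{t,\lambda}$ and the isomorphism $\mathcal{B}_{t,\lambda,N}$ introduced there: recall that $A(t)-\lambda$ is invertible from $H^{\frac{1}{2}}(\partial\Omega)$ onto $H^{-\frac{1}{2}}(\partial\Omega)$ and that, for $f$ in the range, $y=(A(t)-\lambda)^{-1}f$ equals $\gamma_{0}(u_{t})$ for the unique $u_{t}\in H^{1}(\Omega)$ with $a_{t,\lambda}(u_{t},v)=\langle f,\gamma_{0}(v)\rangle_{H^{-\frac{1}{2}}(\partial\Omega)\times H^{\frac{1}{2}}(\partial\Omega)}$ for all $v$. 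Invertibility of the two parts is immediate by restriction: if $f\in L^{2}(\partial\Omega)$ (resp. $f\in H^{\frac{1}{2}}(\partial\Omega)$), then $y=(A(t)-\lambda)^{-1}f\in H^{\frac{1}{2}}(\partial\Omega)$ and $A(t)y=f+\lambda y$ lies in $L^{2}(\partial\Omega)$ (resp. in $H^{\frac{1}{2}}(\partial\Omega)$), so $y$ belongs to the relevant domain, while injectivity is inherited from $A(t)-\lambda$ on $H^{-\frac{1}{2}}(\partial\Omega)$. It remains to prove the two norm bounds.

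For the $L^{2}$ bound, take $f\in L^{2}(\partial\Omega)$ and test $a_{t,\lambda}(u_{t},v)=\langle f,\gamma_{0}(v)\rangle$ with $v=u_{t}$. Since $f\in L^{2}(\partial\Omega)$ and $\gamma_{0}(u_{t})=y$, this reads
\[
a_{t}(u_{t},u_{t})-\lambda\,\|y\|_{L^{2}(\partial\Omega)}^{2}=(f,y)_{L^{2}(\partial\Omega)}.
\]
Splitting into real and imaginary parts, using $\sigma\ge0$, the coercivity $\text{Re}(a_{t}(u_{t},u_{t}))\ge C\|u_{t}\|_{H^{1}(\Omega)}^{2}$ and the bound $|a_{t}(u_{t},u_{t})|\le M\|u_{t}\|_{H^{1}(\Omega)}^{2}$ of Remark~\ref{rem:M}, I obtain
\[
C\|u_{t}\|_{H^{1}(\Omega)}^{2}+\sigma\|y\|_{L^{2}(\partial\Omega)}^{2}\le\|f\|_{L^{2}(\partial\Omega)}\|y\|_{L^{2}(\partial\Omega)}
\]
and
\[
|\tau|\,\|y\|_{L^{2}(\partial\Omega)}^{2}\le M\|u_{t}\|_{H^{1}(\Omega)}^{2}+\|f\|_{L^{2}(\partial\Omega)}\|y\|_{L^{2}(\partial\Omega)}.
\]
The first line bounds both $\sigma\|y\|_{L^{2}}$ and $\|u_{t}\|_{H^{1}}^{2}$ by $\|f\|_{L^{2}}\|y\|_{L^{2}}$; feeding this into the second bounds $|\tau|\,\|y\|_{L^{2}}$ by a multiple of $\|f\|_{L^{2}}$. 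Finally, the trace bound $\|y\|_{L^{2}}\le\|\gamma_{0}\|\,\|u_{t}\|_{H^{1}}$ combined with $\|u_{t}\|_{H^{1}}^{2}\le\frac{1}{C}\|f\|_{L^{2}}\|y\|_{L^{2}}$ yields $\|y\|_{L^{2}}\le C'\|f\|_{L^{2}}$, the ingredient needed for small $\lambda$. Adding the three contributions and using $|\lambda|\le\sigma+|\tau|$ gives $(1+|\lambda|)\|y\|_{L^{2}}\le C''\|f\|_{L^{2}}$, with $C''$ independent of $t$ and $\lambda$.

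For the $H^{\frac{1}{2}}$ bound I argue by duality, since the computation above is tied to the $L^{2}$ boundary pairing appearing in $a_{t,\lambda}$ and thus produces only $L^{2}$ decay. The adjoint form $a_{t}^{\ast}(u,v):=\overline{a_{t}(v,u)}$ satisfies all the hypotheses of Assumption~\ref{fact:Assumptionsoftheform} (coercivity, boundedness, H\"older continuity and convergence are symmetric under $a_{t}\mapsto a_{t}^{\ast}$), so it defines a Dirichlet-to-Neumann operator $A^{\ast}(t)\in\mathcal{B}(H^{\frac{1}{2}}(\partial\Omega),H^{-\frac{1}{2}}(\partial\Omega))$ to which the first condition of Theorem~\ref{thm:teorema11} applies, giving $\|(A^{\ast}(t)-\overline{\lambda})^{-1}\|_{\mathcal{B}(H^{-\frac{1}{2}}(\partial\Omega))}\le\frac{C}{1+|\lambda|}$. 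A direct check using the characterization of Theorem~\ref{thm:Existencia Dirichlet-to-Neumann} shows $A^{\ast}(t)$ is the adjoint of $A(t)$ for the $H^{-\frac{1}{2}}$–$H^{\frac{1}{2}}$ pairing, so that $(A^{\ast}(t)-\overline{\lambda})^{-1}$ is the transpose of $\left.(A(t)-\lambda)^{-1}\right|_{H^{\frac{1}{2}}(\partial\Omega)}$. Concretely, for $f\in H^{\frac{1}{2}}(\partial\Omega)$ and $z\in H^{-\frac{1}{2}}(\partial\Omega)$, writing $p=(A(t)-\lambda)^{-1}f$ and $q=(A^{\ast}(t)-\overline{\lambda})^{-1}z$, the $\overline{\lambda}$ cross terms cancel because $p,q\in H^{\frac{1}{2}}(\partial\Omega)\subset L^{2}(\partial\Omega)$ makes the pairing Hermitian, leaving $\langle z,p\rangle=\langle q,f\rangle$; hence
\[
\|p\|_{H^{\frac{1}{2}}(\partial\Omega)}=\sup_{\|z\|_{H^{-\frac{1}{2}}}\le1}\left|\langle q,f\rangle\right|\le\sup_{\|z\|_{H^{-\frac{1}{2}}}\le1}\|q\|_{H^{-\frac{1}{2}}(\partial\Omega)}\,\|f\|_{H^{\frac{1}{2}}(\partial\Omega)}\le\frac{C}{1+|\lambda|}\|f\|_{H^{\frac{1}{2}}(\partial\Omega)}.
\]

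The main obstacle is exactly this $H^{\frac{1}{2}}$ decay: the natural variational estimate only sees the $L^{2}$ inner product in $a_{t,\lambda}$ and cannot generate decay in the stronger norm, which forces the duality route. The two points needing care there are checking that $a_{t}^{\ast}$ inherits every item of Assumption~\ref{fact:Assumptionsoftheform} and correctly identifying $A^{\ast}(t)$ as the transpose of $A(t)$ with the conjugate spectral parameter $\overline{\lambda}$; once this is done, the estimate for the adjoint problem is already covered by Theorem~\ref{thm:teorema11} and transfers with the same constant, uniformly in $t\in[t_{0},\infty]$.
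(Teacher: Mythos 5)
Your argument is correct, and it splits naturally into a part that mirrors the paper and a part that does not. For the $L^{2}$ estimate you test the perturbed form identity with $v=u_{t}$ and combine coercivity, the bound of Remark \ref{rem:M} and the trace inequality; separating real and imaginary parts is only a cosmetic reorganization of the paper's chain \eqref{eq:julelambdafL2}--\eqref{eq:bbb}, and both yield the same estimates $\left\Vert u_{t}\right\Vert _{H^{1}\left(\Omega\right)}^{2}\le C\left\Vert f\right\Vert _{L^{2}}\left\Vert y\right\Vert _{L^{2}}$ and $(1+\left|\lambda\right|)\left\Vert y\right\Vert _{L^{2}}\le C\left\Vert f\right\Vert _{L^{2}}$. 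For the $H^{\frac{1}{2}}$ estimate your route is genuinely different: the paper uses the algebraic identity $\lambda\left(\lambda-A\left(t\right)\right)^{-1}f=\left(\lambda-A\left(t\right)\right)^{-1}A\left(t\right)f+f$ together with the $\lambda$-uniform bound \eqref{eq:julef} on $\left(\lambda-A\left(t\right)\right)^{-1}$ in $\mathcal{B}\left(H^{-\frac{1}{2}},H^{\frac{1}{2}}\right)$ and invertibility near the origin, whereas you dualize against the Dirichlet-to-Neumann operator of the adjoint form $a_{t}^{*}$ and transfer the $H^{-\frac{1}{2}}$ resolvent decay of Theorem \ref{thm:teorema11} through the pairing. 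Your identity $\left\langle z,p\right\rangle=\overline{\left\langle f,q\right\rangle}$ checks out (the $\lambda$ terms conjugate correctly because the embedding $H^{\frac{1}{2}}\hookrightarrow H^{-\frac{1}{2}}$ is realized via the $L^{2}$ product), and $a_{t}^{*}$ visibly inherits every item of Assumption \ref{fact:Assumptionsoftheform}, so the constant is uniform in $t$. The trade-off: the paper's argument is more elementary and stays entirely inside the already-proved facts about $A\left(t\right)$, while yours avoids the small-$\lambda$/large-$\lambda$ case split at the cost of introducing the adjoint machinery that the paper only needs later, in Section \ref{sec:2.1.4}; it is a perfectly legitimate alternative and arguably gives the cleaner one-line final estimate.
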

\begin{proof}
It is clear $\left.A\left(t\right)\right|_{L^{2}\left(\partial\Omega\right)}-\lambda$
and $\left.A\left(t\right)\right|_{H^{\frac{1}{2}}\left(\partial\Omega\right)}-\lambda$
are bijections, since $A\left(t\right)-\lambda I:H^{\frac{1}{2}}\left(\partial\Omega\right)\to H^{-\frac{1}{2}}\left(\partial\Omega\right)$
is also one.

Let us prove the inequality for $\left.A\left(t\right)\right|_{L^{2}\left(\partial\Omega\right)}$. We consider $\text{Re}\left(\lambda\right)\le0$, $y\in H^{\frac{1}{2}}\left(\partial\Omega\right)$
and $f\in L^{2}\left(\partial\Omega\right)$ such that $\left(A\left(t\right)-\lambda\right)y=f$.
Then there exists a $u_{t}\in H^{1}(\Omega)$ that satisfies Equation
\eqref{eq:Aminuslambdaequalf} and is such that $\gamma_{0}\left(u_{t}\right)=y$.
Consequently, we obtain that 
\begin{eqnarray} \label{eq:julelambdafL2}
 &  & \lefteqn{\left(1+\left|\lambda\right|\right)\left\Vert \gamma_{0}\left(u_{t}\right)\right\Vert _{L^{2}\left(\partial\Omega\right)}^{2}}\nonumber \\
 & \le & \left|a_{t}\left(u_{t},u_{t}\right)\right|+\left|\left\langle f,\gamma_{0}\left(u_{t}\right)\right\rangle _{H^{-\frac{1}{2}}\left(\partial\Omega\right)\times H^{\frac{1}{2}}\left(\partial\Omega\right)}\right|+\left|\left\langle \gamma_{0}\left(u_{t}\right),\gamma_{0}\left(u_{t}\right)\right\rangle _{H^{-\frac{1}{2}}\left(\partial\Omega\right)\times H^{\frac{1}{2}}\left(\partial\Omega\right)}\right| \\
 & \le & C\left(\left\Vert u_{t}\right\Vert _{H^{1}\left(\Omega\right)}^{2}+\left\Vert f\right\Vert _{L^{2}\left(\partial\Omega\right)}\left\Vert \gamma_{0}\left(u_{t}\right)\right\Vert _{L^{2}\left(\partial\Omega\right)}+\left\Vert \gamma_{0}\left(u_{t}\right)\right\Vert _{L^{2}\left(\partial\Omega\right)}^{2}\right). \nonumber
\end{eqnarray}
 Arguing as in Equation \eqref{eq620}, and noting
that, if $f\in L^{2}\left(\partial\Omega\right)$, then 
$$\left\langle f,\gamma_{0}\left(u_{t}\right)\right\rangle _{H^{-\frac{1}{2}}\left(\partial\Omega\right)\times H^{\frac{1}{2}}\left(\partial\Omega\right)}=\left(f,\gamma_{0}\left(u_{t}\right)\right)_{L^{2}\left(\partial\Omega\right)},$$
we get that  
\begin{equation}
\left\Vert u_{t}\right\Vert _{H^{1}\left(\Omega\right)}^{2}\le C\left\Vert f\right\Vert _{L^{2}\left(\partial\Omega\right)}\left\Vert \gamma_{0}\left(u_{t}\right)\right\Vert _{L^{2}\left(\partial\Omega\right)}.\label{eq:aaa}
\end{equation}
Now, using the continuity of the trace and Equation \eqref{eq:aaa}, we have that 
\begin{equation} \label{eq:bbb}
\begin{array}{rcl}
\left\Vert \gamma_{0}\left(u_{t}\right)\right\Vert _{L^{2}\left(\partial\Omega\right)}^{2}&\le&\left\Vert \gamma_{0}\left(u_{t}\right)\right\Vert _{H^{\frac{1}{2}}\left(\partial\Omega\right)}^{2} 
\le\left\Vert \gamma_{0}\right\Vert _{\mathcal{B}\left(H^{1}\left(\Omega\right),H^{\frac{1}{2}}\left(\partial\Omega\right)\right)}^{2}\left\Vert u_{t}\right\Vert _{H^{1}\left(\Omega\right)}^{2} \\ 
&\le& C\left\Vert f\right\Vert _{L^{2}\left(\partial\Omega\right)}\left\Vert \gamma_{0}\left(u_{t}\right)\right\Vert _{L^{2}\left(\partial\Omega\right)}.
\end{array}
\end{equation}
 Applying Equations \eqref{eq:aaa} and \eqref{eq:bbb}
to the Equation \eqref{eq:julelambdafL2}, we conclude that 
$$\left\Vert \gamma_{0}\left(u_{t}\right)\right\Vert _{L^{2}\left(\partial\Omega\right)}\le\frac{C}{1+\left|\lambda\right|}\left\Vert f\right\Vert _{L^{2}\left(\partial\Omega\right)}.$$

Finally, let us prove the inequality for $\left.A\left(t\right)\right|_{H^{\frac{1}{2}}\left(\partial\Omega\right)}$. We take $\text{Re}\left(\lambda\right)\le0$ and $f\in H^{\frac{1}{2}}\left(\partial\Omega\right)$. As 
\[
\lambda\left(\lambda-A\left(t\right)\right)^{-1}f=\left(\lambda-A\left(t\right)\right)^{-1}A\left(t\right)f+f,
\]
we conclude that
\begin{equation}\label{eq:uhhhh}
\begin{array}{rcl}
& & \left|\lambda\right|\left\Vert \left(\lambda-A\left(t\right)\right)^{-1}f\right\Vert _{H^{\frac{1}{2}}\left(\partial\Omega\right)}
\\
&=& \left(\left\Vert \left(\lambda-A\left(t\right)\right)^{-1}\right\Vert _{\mathcal{B}\left(H^{-\frac{1}{2}}\left(\partial\Omega\right),H^{\frac{1}{2}}\left(\partial\Omega\right)\right)}\left\Vert A\left(t\right)\right\Vert _{\mathcal{B}\left(H^{\frac{1}{2}}\left(\partial\Omega\right),H^{-\frac{1}{2}}\left(\partial\Omega\right)\right)}+1\right)\left\Vert f\right\Vert _{H^{\frac{1}{2}}\left(\partial\Omega\right)}
\\
&\le &  C\left\Vert f\right\Vert _{H^{\frac{1}{2}}\left(\partial\Omega\right)}.
\end{array}
\end{equation}

The last inequality was obtained using Equation \eqref{eq:julef} and the uniform boundedness of $A(t)$.
The inequality from the statement of the Theorem follows then from
Equation \eqref{eq:uhhhh} and the fact that $\lambda-A\left(t\right)$,
and therefore $\lambda-\left.A\left(t\right)\right|_{H^{\frac{1}{2}}\left(\partial\Omega\right)}$,
is invertible in a neighborhood of the origin.
\end{proof}
The Dirichlet-to-Neumann problem on $L^{2}\left(\partial\Omega\right)$ can now be defined by:
\begin{equation}
\begin{array}{rcl}
\frac{du}{dt}\left(t\right)+\left.A\left(t\right)\right|_{L^{2}\left(\partial\Omega\right)}u\left(t\right) & = & f\left(t\right),\,t>t_{0}\\
u\left(t_{0}\right) & = & u_{0},\label{eq770}
\end{array}
\end{equation}
 where $u_{0}\in L^{2}\left(\partial\Omega\right)$ and $f\in C_{u}^{\alpha}\left(\left[t_{0},\infty\right[,L^{2}\left(\partial\Omega\right)\right)$.

The well-posedness of the above problem is the main Theorem of this
section:
\begin{thm}
\label{thm:L2} Suppose that the forms \eqref{eq:forms} satisfy the Assumption 2, that is, $\alpha\in\left]\frac{1}{2},1\right]$, and let $f\in C_{u}^{\alpha}\left(\left[t_{0},\infty\right[,L^{2}\left(\partial\Omega\right)\right)$. Then there is a unique $u\in C\left(\left[t_{0},\infty\right[,L^{2}\left(\partial\Omega\right)\right)\cap C^{1}\left(\left]t_{0},\infty\right[,L^{2}\left(\partial\Omega\right)\right)\cap C\left(\left]t_{0},\infty\right[,H^{\frac{1}{2}}\left(\partial\Omega\right)\right)$
such that $u\left(t\right)\in\mathcal{D}\left(\left.A\left(t\right)\right|_{L^{2}\left(\partial\Omega\right)}\right)$,
for all $t>t_{0}$, and such that $u$ is a solution of the Problem
\eqref{eq770}.
\end{thm}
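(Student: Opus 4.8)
The plan is to verify that the family $\left\{ A\left(t\right)|_{L^{2}\left(\partial\Omega\right)}\right\} _{t\in\left[t_{0},\infty\right[}$ satisfies the Yagi conditions of Definition \ref{def:Yagiconditions} and then to invoke the theorem of A.\ Yagi stated above. Condition 1 is already at our disposal: it is precisely the resolvent estimate for $A\left(t\right)|_{L^{2}\left(\partial\Omega\right)}$ proved in Proposition \ref{prop:L2sectorial}. The whole difficulty is therefore concentrated in condition 2, for which one must choose the exponent $\nu$. Since Assumption 2 gives $\alpha>\frac{1}{2}$, the interval $\left]1-\alpha,\frac{1}{2}\right[$ is nonempty, and I fix any $\nu\in\left]1-\alpha,\frac{1}{2}\right[$; then $\alpha+\nu>1$, as required.

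The heart of the argument is the uniform bound
\[
\left\Vert \left(A\left(t\right)|_{L^{2}\left(\partial\Omega\right)}\right)^{\nu}y\right\Vert _{L^{2}\left(\partial\Omega\right)}\le C\left\Vert y\right\Vert _{H^{\frac{1}{2}}\left(\partial\Omega\right)},\qquad y\in H^{\frac{1}{2}}\left(\partial\Omega\right),
\]
with $C$ independent of $t$, which in particular yields $H^{\frac{1}{2}}\left(\partial\Omega\right)\subseteq\mathcal{D}\left(\left(A\left(t\right)|_{L^{2}\left(\partial\Omega\right)}\right)^{\nu}\right)$. To obtain it I would first record a smoothing estimate for the resolvent. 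Inequality \eqref{eq:julef} says that, for $\mathrm{Re}\left(\lambda\right)\le0$, the resolvent $\left(A\left(t\right)-\lambda\right)^{-1}$ is bounded from $H^{-\frac{1}{2}}\left(\partial\Omega\right)$ into $H^{\frac{1}{2}}\left(\partial\Omega\right)$ by a constant independent of $\lambda$ and of $t$; interpolating this with the decay bound $\left\Vert \left(A\left(t\right)-\lambda\right)^{-1}\right\Vert _{\mathcal{B}\left(H^{-\frac{1}{2}}\left(\partial\Omega\right)\right)}\le C\left(1+\left|\lambda\right|\right)^{-1}$, valid by Theorem \ref{thm:teorema11}, and using $L^{2}\left(\partial\Omega\right)=\left[H^{\frac{1}{2}}\left(\partial\Omega\right),H^{-\frac{1}{2}}\left(\partial\Omega\right)\right]_{\frac{1}{2}}$, gives
\[
\left\Vert \left(A\left(t\right)-\lambda\right)^{-1}\right\Vert _{\mathcal{B}\left(H^{-\frac{1}{2}}\left(\partial\Omega\right),L^{2}\left(\partial\Omega\right)\right)}\le\frac{C}{\left(1+\left|\lambda\right|\right)^{\frac{1}{2}}},\qquad\mathrm{Re}\left(\lambda\right)\le0,
\]
uniformly in $t$. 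I would then represent the fractional power by the Balakrishnan formula
\[
\left(A\left(t\right)|_{L^{2}\left(\partial\Omega\right)}\right)^{\nu}y=\frac{\sin\pi\nu}{\pi}\int_{0}^{\infty}\lambda^{\nu-1}\,A\left(t\right)\left(\lambda+A\left(t\right)\right)^{-1}y\,d\lambda
\]
and estimate its $L^{2}\left(\partial\Omega\right)$-norm by splitting the integral at $\lambda=1$. Near $0$ one uses $\left\Vert A\left(t\right)\left(\lambda+A\left(t\right)\right)^{-1}\right\Vert _{\mathcal{B}\left(L^{2}\left(\partial\Omega\right)\right)}\le C$ together with the integrability of $\lambda^{\nu-1}$, which holds since $\nu>0$; near $\infty$ one uses the consistency of the resolvents across the scale $H^{-\frac{1}{2}}\left(\partial\Omega\right)\supseteq L^{2}\left(\partial\Omega\right)\supseteq H^{\frac{1}{2}}\left(\partial\Omega\right)$ to write $A\left(t\right)\left(\lambda+A\left(t\right)\right)^{-1}y=\left(\lambda+A\left(t\right)\right)^{-1}A\left(t\right)y$ with $A\left(t\right)y\in H^{-\frac{1}{2}}\left(\partial\Omega\right)$, $\left\Vert A\left(t\right)y\right\Vert _{H^{-\frac{1}{2}}\left(\partial\Omega\right)}\le M\left\Vert y\right\Vert _{H^{\frac{1}{2}}\left(\partial\Omega\right)}$, and applies the smoothing estimate, so that the integrand is $O\left(\lambda^{\nu-\frac{3}{2}}\right)$, integrable exactly because $\nu<\frac{1}{2}$. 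The convergence of the integral shows $y\in\mathcal{D}\left(\left(A\left(t\right)|_{L^{2}\left(\partial\Omega\right)}\right)^{\nu}\right)$ and delivers the displayed bound.

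With this in hand, condition 2 follows quickly. For $f\in L^{2}\left(\partial\Omega\right)\hookrightarrow H^{-\frac{1}{2}}\left(\partial\Omega\right)$ the element $\left(A\left(t\right)^{-1}-A\left(s\right)^{-1}\right)f$ lies in $H^{\frac{1}{2}}\left(\partial\Omega\right)$, so the domain requirement $\mathcal{D}\left(A\left(s\right)|_{L^{2}\left(\partial\Omega\right)}\right)\subseteq H^{\frac{1}{2}}\left(\partial\Omega\right)\subseteq\mathcal{D}\left(\left(A\left(t\right)|_{L^{2}\left(\partial\Omega\right)}\right)^{\nu}\right)$ holds, and, combining the previous bound with the H\"older estimate $\left\Vert A\left(t\right)^{-1}-A\left(s\right)^{-1}\right\Vert _{\mathcal{B}\left(H^{-\frac{1}{2}}\left(\partial\Omega\right),H^{\frac{1}{2}}\left(\partial\Omega\right)\right)}\le C\left|t-s\right|^{\alpha}$ established in the proof of Theorem \ref{thm:teorema11},
\[
\left\Vert \left(A\left(t\right)|_{L^{2}\left(\partial\Omega\right)}\right)^{\nu}\left(A\left(t\right)^{-1}-A\left(s\right)^{-1}\right)f\right\Vert _{L^{2}\left(\partial\Omega\right)}\le C\left\Vert \left(A\left(t\right)^{-1}-A\left(s\right)^{-1}\right)f\right\Vert _{H^{\frac{1}{2}}\left(\partial\Omega\right)}\le C\left|t-s\right|^{\alpha}\left\Vert f\right\Vert _{L^{2}\left(\partial\Omega\right)}.
\]
Thus the Yagi conditions hold with the exponents $\nu$ and $\alpha$, and Yagi's theorem provides the unique $u\in C\left(\left[t_{0},\infty\right[,L^{2}\left(\partial\Omega\right)\right)\cap C^{1}\left(\left]t_{0},\infty\right[,L^{2}\left(\partial\Omega\right)\right)$ with $u\left(t\right)\in\mathcal{D}\left(A\left(t\right)|_{L^{2}\left(\partial\Omega\right)}\right)$ solving \eqref{eq770}. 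Finally, the extra regularity $u\in C\left(\left]t_{0},\infty\right[,H^{\frac{1}{2}}\left(\partial\Omega\right)\right)$ follows by writing $u\left(t\right)=A\left(t\right)^{-1}\left(f\left(t\right)-\frac{du}{dt}\left(t\right)\right)$, where now $A\left(t\right)^{-1}:L^{2}\left(\partial\Omega\right)\to H^{\frac{1}{2}}\left(\partial\Omega\right)$ is bounded and, by the proof of Theorem \ref{thm:teorema11}, depends continuously on $t$, while $t\mapsto f\left(t\right)-\frac{du}{dt}\left(t\right)$ is continuous into $L^{2}\left(\partial\Omega\right)$ on $\left]t_{0},\infty\right[$. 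The main obstacle is the fractional-power bound of the second paragraph; the point that makes it work is the freedom, granted exactly by $\alpha>\frac{1}{2}$, to take $\nu$ strictly below the critical value $\frac{1}{2}$, thereby avoiding the logarithmic divergence of the Balakrishnan integral at the endpoint.
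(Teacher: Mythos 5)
Your proposal is correct in substance and reaches the theorem by the same overall strategy as the paper (verify the Yagi conditions for $\left.A(t)\right|_{L^{2}(\partial\Omega)}$ with $\nu\in\left]1-\alpha,\tfrac{1}{2}\right[$, then invoke Yagi's theorem), but the key technical step is executed by a genuinely different route. The paper never estimates $\bigl(\left.A(t)\right|_{L^{2}(\partial\Omega)}\bigr)^{\nu}$ on $H^{\frac{1}{2}}(\partial\Omega)$ directly: instead it moves the fractional power onto the test function via the adjoint identity of Lemma \ref{lem:lemma19}, writing $A(t)^{\nu}=A(t)^{\nu-1}A(t)$ and pairing with $\left(A(t)^{*}\right)^{\nu-1}y$, and the whole weight of the argument is carried by Lemma \ref{lem:lemma20} (namely $\left(A(t)^{*}\right)^{-\theta}:L^{2}(\partial\Omega)\to H^{\frac{1}{2}}(\partial\Omega)$ for $\theta>\tfrac{1}{2}$), which is proved through the real interpolation inclusion $\left(H^{-\frac{1}{2}}(\partial\Omega),\mathcal{D}(A(t))\right)_{\frac{1}{2},\infty}\subset\mathcal{D}(A(t)^{\sigma})$ and a duality argument. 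You instead prove the (essentially dual) bound $\bigl\Vert\bigl(\left.A(t)\right|_{L^{2}(\partial\Omega)}\bigr)^{\nu}y\bigr\Vert_{L^{2}(\partial\Omega)}\le C\Vert y\Vert_{H^{\frac{1}{2}}(\partial\Omega)}$ by complex interpolation of the resolvent between $\mathcal{B}\bigl(H^{-\frac{1}{2}},H^{\frac{1}{2}}\bigr)$ and $\mathcal{B}\bigl(H^{-\frac{1}{2}}\bigr)$ followed by the Balakrishnan representation; the paper's first step of Lemma \ref{lem:lemma20} is the mirror-image smoothing estimate $\bigl\Vert(A(t)-\lambda)^{-1}\bigr\Vert_{\mathcal{B}(L^{2},H^{\frac{1}{2}})}\le C|\lambda|^{-\frac{1}{2}}$, obtained there by a direct form computation rather than interpolation. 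Both routes work and cost about the same; yours avoids the adjoint bookkeeping at the price of two standard facts you should make explicit: (i) the identification $\left[H^{\frac{1}{2}}(\partial\Omega),H^{-\frac{1}{2}}(\partial\Omega)\right]_{\frac{1}{2}}=L^{2}(\partial\Omega)$ on a Lipschitz boundary, and (ii) that absolute convergence of the Balakrishnan integral for a given $y$ actually implies $y\in\mathcal{D}\bigl(\bigl(\left.A(t)\right|_{L^{2}(\partial\Omega)}\bigr)^{\nu}\bigr)$ with the integral as its value --- this requires approximating $y$ by $n(n+A(t))^{-1}y\in\mathcal{D}\bigl(\left.A(t)\right|_{L^{2}(\partial\Omega)}\bigr)$ and passing to the limit by dominated convergence, since the Balakrishnan formula a priori defines $A^{\nu}$ only on $\mathcal{D}(A)$. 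Your closing argument for the $C\bigl(\left]t_{0},\infty\right[,H^{\frac{1}{2}}(\partial\Omega)\bigr)$ regularity via $u(t)=A(t)^{-1}\bigl(f(t)-\tfrac{du}{dt}(t)\bigr)$ is also a direct substitute for the paper's appeal to Corollary \ref{cor:dthhmeio} (uniqueness in $H^{-\frac{1}{2}}(\partial\Omega)$ forces the Yagi solution to coincide with the already-regular $H^{-\frac{1}{2}}$ solution); both are valid.
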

An elegant way to prove Theorem \ref{thm:L2} is due to A. Yagi.
In \cite{Yagibook,Yagipaper}, well-posedness of non-autonomous problems were obtained to operators
defined by forms in the traditional way, when the domain of the operator
is a subset of the domain of the form. In the case we are considering here, the
domain of the form is $H^{1}\left(\Omega\right)$, and the domain of the operator
is a set contained in $H^{\frac{1}{2}}\left(\partial\Omega\right)$.
One set is not even included in the other. It is clear that some changes are necessary.

In order to provide a full proof, we argue as Yagi. Lemma \ref{lem:lemma20} below is essentially contained in \cite[Theorem 2.32, page 110]{Yagibook}. The idea of proof of Theorem \ref{thm:Yagi} comes from \cite[page 149]{Yagibook}, although here it requires some results obtained in the previous sections of this paper. A different prove could be given using the methods of Tanabe \cite[Section 5.4]{Tanabe}, although it would also require some modifications and the same hypothesis $\alpha\in\left]\frac{1}{2},1\right]$.

Theorem \ref{thm:L2} is obtained as a consequence
of Corollary \ref{cor:dthhmeio} and the following result.
\begin{thm}
\label{thm:Yagi} If the Assumption 2 is fulfilled, then for all $\nu\in\left]1-\alpha,\frac{1}{2}\right[$, the family of operators 
$$\left\{ \left.A\left(t\right)\right|_{L^{2}\left(\partial\Omega\right)}:\mathcal{D}\left(\left.A\left(t\right)\right|_{L^{2}\left(\partial\Omega\right)}\right)\subset L^{2}\left(\partial\Omega\right)\to L^{2}\left(\partial\Omega\right)\right\} _{t\in\left[t_{0},\infty\right]}$$ 
satisfies the Yagi conditions.
\end{thm}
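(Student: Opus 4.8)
The plan is to verify the two Yagi conditions of Definition \ref{def:Yagiconditions} for $S(t)=\left.A(t)\right|_{L^{2}(\partial\Omega)}$ on $H=L^{2}(\partial\Omega)$, with the exponent $\alpha$ from Assumption \ref{fact:Assumptionsoftheform2} and any $\nu\in\left]1-\alpha,\tfrac{1}{2}\right[$. Note first that this interval is nonempty exactly because Assumption 2 forces $\alpha>\tfrac{1}{2}$, so $1-\alpha<\tfrac{1}{2}$; the required inequality $\alpha+\nu>1$ is then built into the choice $\nu>1-\alpha$. Condition 1 is already available: it is precisely the uniform resolvent estimate for $\left.A(t)\right|_{L^{2}(\partial\Omega)}$ proved in Proposition \ref{prop:L2sectorial}, which in particular shows that each $\left.A(t)\right|_{L^{2}(\partial\Omega)}$ is a sectorial operator of positive type, so that its fractional powers are well defined. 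All the effort goes into Condition 2.

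To handle Condition 2, I would split it into two independent ingredients and then compose them. The first ingredient is a H\"older estimate for the inverses: since $A(t)^{-1}=\gamma_{0}\circ\mathcal{B}_{t,N}^{-1}\circ k$ (Proposition \ref{cor:uniforAt}) and the maps $\gamma_{0}$ and $k$ do not depend on $t$, the bound $\left\Vert\mathcal{B}_{t,N}^{-1}-\mathcal{B}_{s,N}^{-1}\right\Vert_{\mathcal{B}(H^{1}(\Omega)^{*},H^{1}(\Omega))}\le C|t-s|^{\alpha}$ established inside the proof of Theorem \ref{thm:teorema11} yields
$$\left\Vert A(t)^{-1}-A(s)^{-1}\right\Vert_{\mathcal{B}(H^{-\frac{1}{2}}(\partial\Omega),H^{\frac{1}{2}}(\partial\Omega))}\le C|t-s|^{\alpha},$$
and restricting the domain along $L^{2}(\partial\Omega)\hookrightarrow H^{-\frac{1}{2}}(\partial\Omega)$ gives the same bound in $\mathcal{B}(L^{2}(\partial\Omega),H^{\frac{1}{2}}(\partial\Omega))$. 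The second ingredient is the central lemma: for $\nu\in\left]0,\tfrac{1}{2}\right[$ the fractional power $(\left.A(t)\right|_{L^{2}(\partial\Omega)})^{\nu}$ maps $H^{\frac{1}{2}}(\partial\Omega)$ boundedly into $L^{2}(\partial\Omega)$, uniformly in $t$. Granting this, the domain inclusion $\mathcal{D}(\left.A(s)\right|_{L^{2}(\partial\Omega)})=A(s)^{-1}(L^{2}(\partial\Omega))\subset H^{\frac{1}{2}}(\partial\Omega)\subset\mathcal{D}((\left.A(t)\right|_{L^{2}(\partial\Omega)})^{\nu})$ is automatic, and composing the lemma with the first ingredient gives, for $f\in L^{2}(\partial\Omega)$,
$$\left\Vert(\left.A(t)\right|_{L^{2}(\partial\Omega)})^{\nu}\left(A(t)^{-1}-A(s)^{-1}\right)f\right\Vert_{L^{2}(\partial\Omega)}\le C\left\Vert\left(A(t)^{-1}-A(s)^{-1}\right)f\right\Vert_{H^{\frac{1}{2}}(\partial\Omega)}\le C|t-s|^{\alpha}\Vert f\Vert_{L^{2}(\partial\Omega)},$$
which is exactly Condition 2.

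For the central lemma I would use the Balakrishnan representation $(\left.A(t)\right|_{L^{2}})^{\nu}x=\frac{\sin\pi\nu}{\pi}\int_{0}^{\infty}s^{\nu-1}A(t)(s+A(t))^{-1}x\,ds$, valid for $x$ in the dense subspace $\mathcal{D}(\left.A(t)\right|_{L^{2}(\partial\Omega)})\subset H^{\frac{1}{2}}(\partial\Omega)$, and estimate the integrand in $L^{2}(\partial\Omega)$ by interpolating the three uniform resolvent estimates we possess, on $H^{-\frac{1}{2}}(\partial\Omega)$ (Theorem \ref{thm:teorema11}), on $L^{2}(\partial\Omega)$ and on $H^{\frac{1}{2}}(\partial\Omega)$ (both from Proposition \ref{prop:L2sectorial}). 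Writing $w_{s}=A(t)(s+A(t))^{-1}x=x-s(s+A(t))^{-1}x$, the $H^{\frac{1}{2}}(\partial\Omega)$ estimate gives $\Vert w_{s}\Vert_{H^{\frac{1}{2}}}\le C\Vert x\Vert_{H^{\frac{1}{2}}}$, while factoring $w_{s}=(s+A(t))^{-1}A(t)x$ and using the $H^{-\frac{1}{2}}(\partial\Omega)$ estimate gives $\Vert w_{s}\Vert_{H^{-\frac{1}{2}}}\le\frac{C}{1+s}\Vert A(t)x\Vert_{H^{-\frac{1}{2}}}\le\frac{C}{1+s}\Vert x\Vert_{H^{\frac{1}{2}}}$. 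Since $L^{2}(\partial\Omega)=[H^{-\frac{1}{2}}(\partial\Omega),H^{\frac{1}{2}}(\partial\Omega)]_{1/2}$, the interpolation bound $\Vert w_{s}\Vert_{L^{2}}\le C\Vert w_{s}\Vert_{H^{-\frac{1}{2}}}^{1/2}\Vert w_{s}\Vert_{H^{\frac{1}{2}}}^{1/2}\le C(1+s)^{-1/2}\Vert x\Vert_{H^{\frac{1}{2}}}$ follows. Inserting this into the integral, the factor $s^{\nu-1}(1+s)^{-1/2}$ is integrable at $0$ because $\nu>0$ and at $\infty$ because $\nu-\tfrac{3}{2}<-1$, i.e.\ precisely because $\nu<\tfrac{1}{2}$; this is where the restriction on $\nu$ is forced. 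The resulting bound $\Vert(\left.A(t)\right|_{L^{2}})^{\nu}x\Vert_{L^{2}}\le C\Vert x\Vert_{H^{\frac{1}{2}}}$ is uniform in $t$, and extends from the dense subspace to all of $H^{\frac{1}{2}}(\partial\Omega)$ by closedness of the fractional power.

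The main obstacle, and the place where care is needed, is the central lemma: making the fractional powers of these non-self-adjoint, merely form-defined operators rigorous, keeping every constant uniform in $t\in[t_{0},\infty]$ (which hinges on the uniformity already secured in Proposition \ref{prop:L2sectorial}), and justifying the interpolation identity together with the density argument used to pass from $\mathcal{D}(\left.A(t)\right|_{L^{2}(\partial\Omega)})$ to $H^{\frac{1}{2}}(\partial\Omega)$. Once the decay rate $(1+s)^{-1/2}$ of the integrand is established, the balance $\alpha+\nu>1$ demanded by the Yagi conditions matches exactly the two constraints $\nu<\tfrac{1}{2}$ (integrability) and $\alpha>\tfrac{1}{2}$ (Assumption 2), which is the structural reason the theorem needs the stronger H\"older exponent.
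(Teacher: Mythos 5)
Your argument is correct, but it is packaged quite differently from the paper's. You prove condition 2 by composing the H\"older estimate $\Vert A(t)^{-1}-A(s)^{-1}\Vert_{\mathcal{B}(H^{-\frac{1}{2}}(\partial\Omega),H^{\frac{1}{2}}(\partial\Omega))}\le C|t-s|^{\alpha}$ with a direct mapping lemma, $\bigl(\left.A(t)\right|_{L^{2}(\partial\Omega)}\bigr)^{\nu}\in\mathcal{B}\bigl(H^{\frac{1}{2}}(\partial\Omega),L^{2}(\partial\Omega)\bigr)$ for $\nu<\tfrac{1}{2}$, obtained from the Balakrishnan integral. The paper instead writes $A(t)^{\nu}=A(t)^{\nu-1}A(t)$ and, by a duality identity for fractional powers (Lemma \ref{lem:lemma19}), throws $A(t)^{\nu-1}$ onto the test function as $(A(t)^{*})^{\nu-1}$; the burden then falls on the dual mapping property $(A(t)^{*})^{-\theta}\in\mathcal{B}\bigl(L^{2}(\partial\Omega),H^{\frac{1}{2}}(\partial\Omega)\bigr)$ for $\theta=1-\nu\in\left]\tfrac{1}{2},1\right[$ (Lemma \ref{lem:lemma20}), proved via the real interpolation embedding $\bigl(H^{-\frac{1}{2}}(\partial\Omega),\mathcal{D}(A(t))\bigr)_{\frac{1}{2},\infty}\subset\mathcal{D}(A(t)^{\sigma})$ and a further duality step. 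The two key lemmas are not the same statement (yours is $H^{\frac{1}{2}}(\partial\Omega)\subset\mathcal{D}\bigl((\left.A(t)\right|_{L^{2}})^{\nu}\bigr)$ for $\nu<\tfrac{1}{2}$; the paper's is $\mathcal{D}\bigl((\left.A(t)^{*}\right|_{L^{2}})^{\theta}\bigr)\subset H^{\frac{1}{2}}(\partial\Omega)$ for $\theta>\tfrac{1}{2}$), but they rest on the identical analytic fact, namely the square-root gain in the resolvent between the $H^{\pm\frac{1}{2}}$ scale and $L^{2}$: your decay $\Vert A(t)(s+A(t))^{-1}x\Vert_{L^{2}(\partial\Omega)}\le C(1+s)^{-\frac{1}{2}}\Vert x\Vert_{H^{\frac{1}{2}}(\partial\Omega)}$ is the mirror image of the paper's first step $\Vert y\Vert_{H^{\frac{1}{2}}(\partial\Omega)}\le C|\lambda|^{-\frac{1}{2}}\Vert f\Vert_{L^{2}(\partial\Omega)}$. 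What your route buys is the complete avoidance of adjoints and of real interpolation spaces --- indeed you do not even need the identity $L^{2}(\partial\Omega)=[H^{-\frac{1}{2}}(\partial\Omega),H^{\frac{1}{2}}(\partial\Omega)]_{\frac{1}{2}}$, since the inequality $\Vert w\Vert_{L^{2}(\partial\Omega)}^{2}=\langle w,w\rangle_{H^{-\frac{1}{2}}(\partial\Omega)\times H^{\frac{1}{2}}(\partial\Omega)}\le\Vert w\Vert_{H^{-\frac{1}{2}}(\partial\Omega)}\Vert w\Vert_{H^{\frac{1}{2}}(\partial\Omega)}$ is just Cauchy--Schwarz for the duality pairing; the price is the explicit integral representation of $A^{\nu}$ on $\mathcal{D}(A)$ and the density-plus-closedness argument needed to extend the bound to all of $H^{\frac{1}{2}}(\partial\Omega)$, both of which you correctly flag and both of which go through with constants uniform in $t$ because every ingredient (Proposition \ref{cor:uniforAt}, Theorem \ref{thm:teorema11}, Proposition \ref{prop:L2sectorial}) is already uniform.
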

First, we fix some notation. Using the duality of $H^{\frac{1}{2}}\left(\partial\Omega\right)$
and $H^{-\frac{1}{2}}\left(\partial\Omega\right)$, for each
$A:H^{\frac{1}{2}}\left(\partial\Omega\right)\to H^{-\frac{1}{2}}\left(\partial\Omega\right)$,
we define $A^{*}:H^{\frac{1}{2}}\left(\partial\Omega\right)\to H^{-\frac{1}{2}}\left(\partial\Omega\right)$ as the operator such that
\[
\left\langle Au,v\right\rangle _{H^{-\frac{1}{2}}\left(\partial\Omega\right)\times H^{\frac{1}{2}}\left(\partial\Omega\right)}=\overline{\left\langle A^{*}v,u\right\rangle _{H^{-\frac{1}{2}}\left(\partial\Omega\right)\times H^{\frac{1}{2}}\left(\partial\Omega\right)}},\,\forall u,v\in H^{\frac{1}{2}}\left(\partial\Omega\right).
\]

If $B:H^{-\frac{1}{2}}\left(\partial\Omega\right)\to H^{\frac{1}{2}}\left(\partial\Omega\right)$,
we define $B^{*}:H^{-\frac{1}{2}}\left(\partial\Omega\right)\to H^{\frac{1}{2}}\left(\partial\Omega\right)$
as the operator such that
\[
\left\langle u,Bv\right\rangle _{H^{-\frac{1}{2}}\left(\partial\Omega\right)\times H^{\frac{1}{2}}\left(\partial\Omega\right)}=\overline{\left\langle v,B^{*}u\right\rangle _{H^{-\frac{1}{2}}\left(\partial\Omega\right)\times H^{\frac{1}{2}}\left(\partial\Omega\right)}},\,\forall u,v\in H^{-\frac{1}{2}}\left(\partial\Omega\right).
\]

By the above definitions, it is clear that $A^{*}$ and $B^{*}$ are uniquely defined and $\left(A^{*}\right)^{-1}=\left(A^{-1}\right)^{*}$. Moreover, if $A$ is the Dirichlet-to-Neumann operator associated with the form $a:H^1(\Omega)\times H^1(\Omega)\to \mathbb{C}$, then $A^{*}$ is operator associated with $a^{*}:H^1(\Omega)\times H^1(\Omega)\to \mathbb{C}$ defined by $a^{*}\left(u,v\right)=\overline{a\left(v,u\right)}$.
\begin{lem}\label{lem:lemma19}
Let $A:H^{\frac{1}{2}}\left(\partial\Omega\right)\subset H^{-\frac{1}{2}}\left(\partial\Omega\right)\to H^{-\frac{1}{2}}\left(\partial\Omega\right)$ be an operator such that $\text{Re}\left(\lambda\right)\le0$ is contained in the resolvent set and $\left\Vert \left(\lambda-A\right)^{-1}\right\Vert \le\frac{C}{1+\left|\lambda\right|}$. Then, if $u\in H^{-\frac{1}{2}}\left(\partial\Omega\right)$
and $v\in H^{\frac{1}{2}}\left(\partial\Omega\right)$, we have $\left\langle A^{-\theta}u,v\right\rangle _{H^{-\frac{1}{2}}\left(\partial\Omega\right)\times H^{\frac{1}{2}}\left(\partial\Omega\right)}=\left\langle u,\left(A^{*}\right)^{-\theta}v\right\rangle _{H^{-\frac{1}{2}}\left(\partial\Omega\right)\times H^{\frac{1}{2}}\left(\partial\Omega\right)}$ for all $\theta\in\left]0,1\right[$.
\end{lem}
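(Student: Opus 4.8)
The plan is to reduce everything to a resolvent--adjoint identity and then to lift it to fractional powers through the Balakrishnan integral representation.

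First I would record the basic duality for the resolvent. A direct computation from the definition of $A^{*}$, together with the Hermitian symmetry of the pairing on $H^{\frac12}(\partial\Omega)$ --- namely $\langle x,y\rangle_{H^{-\frac12}(\partial\Omega)\times H^{\frac12}(\partial\Omega)}=\overline{\langle y,x\rangle_{H^{-\frac12}(\partial\Omega)\times H^{\frac12}(\partial\Omega)}}$ for $x,y\in H^{\frac12}(\partial\Omega)$, which holds because both sides equal $(x,y)_{L^{2}(\partial\Omega)}$ via the embedding $L^{2}(\partial\Omega)\hookrightarrow H^{-\frac12}(\partial\Omega)$ --- gives $(\lambda-A)^{*}=\overline\lambda-A^{*}$ as operators from $H^{\frac12}(\partial\Omega)$ to $H^{-\frac12}(\partial\Omega)$. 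Using $(T^{-1})^{*}=(T^{*})^{-1}$ (noted just before the lemma) this yields, for every $\lambda$ with $\mathrm{Re}(\lambda)\le0$, the identity $((\lambda-A)^{-1})^{*}=(\overline\lambda-A^{*})^{-1}$. Unwinding the definition of the adjoint $B^{*}$ of an operator $B:H^{-\frac12}(\partial\Omega)\to H^{\frac12}(\partial\Omega)$, and again invoking the Hermitian property of the pairing to move between slots, I would obtain the conjugate-free pairing identity
\[
\langle Bu,v\rangle_{H^{-\frac12}(\partial\Omega)\times H^{\frac12}(\partial\Omega)}=\langle u,B^{*}v\rangle_{H^{-\frac12}(\partial\Omega)\times H^{\frac12}(\partial\Omega)},\quad u\in H^{-\frac12}(\partial\Omega),\ v\in H^{\frac12}(\partial\Omega).
\]
Specializing to $B=(s+A)^{-1}$ for real $s\ge0$ (so $\lambda=-s$ has $\mathrm{Re}(\lambda)\le0$ and $B^{*}=(s+A^{*})^{-1}$) gives $\langle (s+A)^{-1}u,v\rangle=\langle u,(s+A^{*})^{-1}v\rangle$.

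Next I would represent the negative fractional powers by the Balakrishnan formula. Viewing $A$ as an unbounded operator on $H=H^{-\frac12}(\partial\Omega)$ with domain $H^{\frac12}(\partial\Omega)$, the hypothesis gives $\left\Vert(s+A)^{-1}\right\Vert_{\mathcal{B}(H)}\le C/(1+s)$ for $s\ge0$, so that for $\theta\in\left]0,1\right[$ the integral
\[
A^{-\theta}=\frac{\sin(\pi\theta)}{\pi}\int_{0}^{\infty}s^{-\theta}(s+A)^{-1}\,ds
\]
converges absolutely in $\mathcal{B}(H)$, by integrability of $s^{-\theta}$ at $0$ and of $s^{-\theta-1}$ at $\infty$. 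Since $B\mapsto B^{*}$ is an isometric involution, $A^{*}$ inherits the same resolvent bound on $\{\mathrm{Re}(\lambda)\le0\}$ and its power $(A^{*})^{-\theta}$ admits the analogous representation. As the integrals converge in operator norm and the duality pairing is continuous, I may interchange the pairing with the integral and apply the resolvent identity under the integral sign to get
\[
\langle A^{-\theta}u,v\rangle=\frac{\sin(\pi\theta)}{\pi}\int_{0}^{\infty}s^{-\theta}\langle u,(s+A^{*})^{-1}v\rangle\,ds=\langle u,(A^{*})^{-\theta}v\rangle,
\]
which is the assertion.

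I expect the only delicate point to be the bookkeeping of complex conjugates: one must check that the sesquilinear definitions of $A^{*}$ and $B^{*}$, combined with the Hermitian symmetry of the pairing on $H^{\frac12}(\partial\Omega)$, genuinely produce the conjugate-free relation $\langle Bu,v\rangle=\langle u,B^{*}v\rangle$ rather than a version carrying a stray conjugate, and that passing to inverses in $(\lambda-A)^{*}=\overline\lambda-A^{*}$ together with the norm-preservation $\left\Vert B^{*}\right\Vert=\left\Vert B\right\Vert$ transfers the resolvent bound to $A^{*}$. Once the resolvent identity and this isometry are in place, the fractional-power step is routine.
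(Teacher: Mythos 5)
Your proposal is correct and follows essentially the same route as the paper: the paper's proof also writes $A^{-\theta}$ via the Balakrishnan integral $\frac{\sin(\theta\pi)}{\pi}\int_{0}^{\infty}\rho^{-\theta}(\rho+A)^{-1}\,d\rho$, passes the duality pairing under the integral, and uses the Hermitian symmetry of the pairing on $H^{\frac{1}{2}}(\partial\Omega)$ together with $\left((\rho+A)^{-1}\right)^{*}=(\rho+A^{*})^{-1}$ to land on $(A^{*})^{-\theta}$. Your additional remarks on absolute convergence of the integral and on the transfer of the resolvent bound to $A^{*}$ are sound but only make explicit what the paper leaves implicit.
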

\begin{proof}
It follows from the definitions given. In fact,
\begin{eqnarray}
& & \frac{\pi}{\sin\left(\theta\pi\right)}\left\langle A^{-\theta}u,v\right\rangle _{H^{-\frac{1}{2}}\left(\partial\Omega\right)\times H^{\frac{1}{2}}\left(\partial\Omega\right)}=\int_{0}^{\infty}\rho^{-\theta}\left\langle \left(\rho+A\right)^{-1}u,v\right\rangle _{H^{-\frac{1}{2}}\left(\partial\Omega\right)\times H^{\frac{1}{2}}\left(\partial\Omega\right)}d\rho
\nonumber\\
&=& \int_{0}^{\infty}\rho^{-\theta}\overline{\left\langle v,\left(\left(\rho+A\right)^{-1}\right)u\right\rangle _{H^{-\frac{1}{2}}\left(\partial\Omega\right)\times H^{\frac{1}{2}}\left(\partial\Omega\right)}}d\rho=\int_{0}^{\infty}\rho^{-\theta}\left\langle u,\left(\left(\rho+A\right)^{-1}\right)^{*}v\right\rangle _{H^{-\frac{1}{2}}\left(\partial\Omega\right)\times H^{\frac{1}{2}}\left(\partial\Omega\right)}d\rho
\nonumber\\
&=&
\int_{0}^{\infty}\rho^{-\theta}\left\langle u,\left(\rho+A^{*}\right)^{-1}v\right\rangle _{H^{-\frac{1}{2}}\left(\partial\Omega\right)\times H^{\frac{1}{2}}\left(\partial\Omega\right)}d\rho=\frac{\pi}{\sin\left(\theta\pi\right)}\left\langle u,\left(A^{*}\right)^{-\theta}v\right\rangle _{H^{-\frac{1}{2}}\left(\partial\Omega\right)\times H^{\frac{1}{2}}\left(\partial\Omega\right)}.
\nonumber
\end{eqnarray}
\end{proof}
As $A\left(t\right)$, $\left.A\left(t\right)\right|_{L^{2}\left(\partial\Omega\right)}$
and $\left.A\left(t\right)\right|_{H^{\frac{1}{2}}\left(\partial\Omega\right)}$
are sectorial, we can define $A\left(t\right)^{-\theta}\in\mathcal{B}\left(H^{-\frac{1}{2}}\left(\partial\Omega\right)\right)$,
$\left(\left.A\left(t\right)\right|_{L^{2}\left(\partial\Omega\right)}\right)^{-\theta}\in\mathcal{B}\left(L^{2}\left(\partial\Omega\right)\right)$
and $\left(\left.A\left(t\right)\right|_{H^{\frac{1}{2}}\left(\partial\Omega\right)}\right)^{-\theta}\in\mathcal{B}\left(H^{\frac{1}{2}}\left(\partial\Omega\right)\right)$
for all $\theta\in\left]0,1\right[$. By the definition of the fractional
powers of an operator, it is clear that $A\left(t\right)^{-\theta}$
coincides with $\left(\left.A\left(t\right)\right|_{L^{2}\left(\partial\Omega\right)}\right)^{-\theta}$
and $\left(\left.A\left(t\right)\right|_{H^{\frac{1}{2}}\left(\partial\Omega\right)}\right)^{-\theta}$
in $L^{2}\left(\partial\Omega\right)$ and $H^{\frac{1}{2}}\left(\partial\Omega\right)$,
respectively. In particular, $A\left(t\right)^{-\theta}$ takes elements
from $L^{2}\left(\partial\Omega\right)$ into $L^{2}\left(\partial\Omega\right)$
and elements from $H^{\frac{1}{2}}\left(\partial\Omega\right)$ into
$H^{\frac{1}{2}}\left(\partial\Omega\right)$. Actually we can say
a little more about their mapping properties.
\begin{lem}\label{lem:lemma20}
Let $\theta\in\left]\frac{1}{2},1\right]$. If $y\in L^{2}\left(\partial\Omega\right)$,
then $A\left(t\right)^{-\theta}y\in H^{\frac{1}{2}}\left(\partial\Omega\right)$
and $A\left(t\right)^{-\theta}:L^{2}\left(\partial\Omega\right)\to H^{\frac{1}{2}}\left(\partial\Omega\right)$
is a continuous operator. In particular, $\left(A\left(t\right)^{*}\right)^{-\theta}:L^{2}\left(\partial\Omega\right)\to H^{\frac{1}{2}}\left(\partial\Omega\right)$ is continuous.
\end{lem}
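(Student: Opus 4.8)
The plan is to reduce everything to an improved resolvent estimate from $L^{2}(\partial\Omega)$ into $H^{\frac{1}{2}}(\partial\Omega)$ and then to feed this estimate into the Balakrishnan integral representation of the fractional power that already appeared in the proof of Lemma \ref{lem:lemma19}.

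First I would sharpen the bounds of Proposition \ref{prop:L2sectorial}. Fix $t$, take $\rho>0$ and $f\in L^{2}(\partial\Omega)$, and let $u_{t}\in H^{1}(\Omega)$ be the function with $\gamma_{0}(u_{t})=(\rho+A(t))^{-1}f$ solving $a_{t}(u_{t},v)+\rho\left\langle \gamma_{0}(u_{t}),\gamma_{0}(v)\right\rangle =(f,\gamma_{0}(v))_{L^{2}(\partial\Omega)}$. Testing with $v=u_{t}$, using $\rho\ge0$ and the coercivity in item 3 of Assumption \ref{fact:Assumptionsoftheform}, gives exactly inequality \eqref{eq:aaa}, namely $\left\Vert u_{t}\right\Vert _{H^{1}(\Omega)}^{2}\le C\left\Vert f\right\Vert _{L^{2}(\partial\Omega)}\left\Vert \gamma_{0}(u_{t})\right\Vert _{L^{2}(\partial\Omega)}$. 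Combining this with the $L^{2}$-sectoriality estimate $\left\Vert \gamma_{0}(u_{t})\right\Vert _{L^{2}(\partial\Omega)}\le\frac{C}{1+\rho}\left\Vert f\right\Vert _{L^{2}(\partial\Omega)}$ of Proposition \ref{prop:L2sectorial} yields $\left\Vert u_{t}\right\Vert _{H^{1}(\Omega)}\le\frac{C}{(1+\rho)^{1/2}}\left\Vert f\right\Vert _{L^{2}(\partial\Omega)}$, and the continuity of the trace $\gamma_{0}:H^{1}(\Omega)\to H^{\frac{1}{2}}(\partial\Omega)$ then produces the key bound
\begin{equation}
\left\Vert (\rho+A(t))^{-1}f\right\Vert _{H^{\frac{1}{2}}(\partial\Omega)}\le\frac{C}{(1+\rho)^{1/2}}\left\Vert f\right\Vert _{L^{2}(\partial\Omega)},\qquad\rho>0,\ f\in L^{2}(\partial\Omega),
\end{equation}
with $C$ independent of $t\in\left[t_{0},\infty\right]$.

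For $\theta\in\left]\frac{1}{2},1\right[$ I would then define $z:=\frac{\sin(\theta\pi)}{\pi}\int_{0}^{\infty}\rho^{-\theta}(\rho+A(t))^{-1}y\,d\rho$ for $y\in L^{2}(\partial\Omega)$ as a Bochner integral in $H^{\frac{1}{2}}(\partial\Omega)$. By the displayed bound the $H^{\frac{1}{2}}$-norm of the integrand is controlled by $C\rho^{-\theta}(1+\rho)^{-1/2}\left\Vert y\right\Vert _{L^{2}(\partial\Omega)}$, and the scalar integral $\int_{0}^{\infty}\rho^{-\theta}(1+\rho)^{-1/2}\,d\rho$ converges precisely because $\theta<1$ (integrability at $0$) and $\theta>\frac{1}{2}$ (integrability at $\infty$); this is exactly where the threshold $\frac{1}{2}$ enters. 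Hence $z\in H^{\frac{1}{2}}(\partial\Omega)$ with $\left\Vert z\right\Vert _{H^{\frac{1}{2}}(\partial\Omega)}\le C\left\Vert y\right\Vert _{L^{2}(\partial\Omega)}$. Since $H^{\frac{1}{2}}(\partial\Omega)\hookrightarrow H^{-\frac{1}{2}}(\partial\Omega)$ continuously, the same integral converges in $H^{-\frac{1}{2}}(\partial\Omega)$, and by the representation used in Lemma \ref{lem:lemma19} its value there is $A(t)^{-\theta}y$; therefore $A(t)^{-\theta}y=z\in H^{\frac{1}{2}}(\partial\Omega)$ and $A(t)^{-\theta}:L^{2}(\partial\Omega)\to H^{\frac{1}{2}}(\partial\Omega)$ is continuous. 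The case $\theta=1$ is immediate, since $A(t)^{-1}:H^{-\frac{1}{2}}(\partial\Omega)\to H^{\frac{1}{2}}(\partial\Omega)$ is bounded and $L^{2}(\partial\Omega)\hookrightarrow H^{-\frac{1}{2}}(\partial\Omega)$.

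Finally, for the statement about $A(t)^{*}$ I would simply observe that $A(t)^{*}$ is the Dirichlet-to-Neumann operator associated with the adjoint form $a_{t}^{*}(u,v)=\overline{a_{t}(v,u)}$, whose coefficients satisfy the very same Assumption \ref{fact:Assumptionsoftheform} (coercivity is preserved because $\text{Re}\,a_{t}^{*}(u,u)=\text{Re}\,a_{t}(u,u)$, and boundedness is clear); the argument above applies verbatim to $A(t)^{*}$. I expect the only real obstacle to be the sharp half-power decay in the resolvent estimate, since it is this gain of $(1+\rho)^{-1/2}$ over the plain sectorial bound that makes the Balakrishnan integral converge for $\theta>\frac{1}{2}$.
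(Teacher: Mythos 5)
Your argument is correct, and its first step is identical to the paper's: the sharpened resolvent bound $\Vert(\rho+A(t))^{-1}\Vert_{\mathcal{B}(L^{2}(\partial\Omega),H^{\frac{1}{2}}(\partial\Omega))}\le C(1+\rho)^{-\frac{1}{2}}$, obtained by combining the coercivity estimate \eqref{eq:aaa} with the $L^{2}$-sectoriality of Proposition \ref{prop:L2sectorial} and the continuity of the trace, is precisely the paper's ``first step,'' and the constants are indeed uniform in $t$. Where you genuinely diverge is in how this gain of half a power is exploited. The paper does not integrate the resolvent directly: it first uses the bound to show $L^{2}(\partial\Omega)\subset\bigl(H^{-\frac{1}{2}}(\partial\Omega),\mathcal{D}(A(t))\bigr)_{\frac{1}{2},\infty}\subset\mathcal{D}(A(t)^{\sigma})$ for $0<\sigma<\frac{1}{2}$, quoting real-interpolation results of Lunardi, and then runs a duality argument, writing $\langle y,A(t)^{-\theta}x\rangle=\overline{\langle A(t)^{1-\theta}x,(A(t)^{-1})^{*}y\rangle}$ and estimating the two factors by $\Vert x\Vert_{L^{2}(\partial\Omega)}$ and $\Vert y\Vert_{H^{-\frac{1}{2}}(\partial\Omega)}$ respectively. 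You instead feed the bound straight into the Balakrishnan integral and check absolute convergence of $\int_{0}^{\infty}\rho^{-\theta}(1+\rho)^{-\frac{1}{2}}d\rho$ for $\frac{1}{2}<\theta<1$; the identification of the $H^{\frac{1}{2}}$-valued integral with $A(t)^{-\theta}y$ via the continuous embedding $H^{\frac{1}{2}}(\partial\Omega)\hookrightarrow H^{-\frac{1}{2}}(\partial\Omega)$ is legitimate, since the part of $A(t)$ in $L^{2}(\partial\Omega)$ has the restricted resolvent and Bochner integrals commute with bounded inclusions. Your route is more elementary and self-contained: no interpolation-space machinery, no adjoint, only the representation already invoked in Lemma \ref{lem:lemma19}. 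What the paper's detour buys is the intermediate statement $L^{2}(\partial\Omega)\subset\mathcal{D}(A(t)^{\sigma})$ for $\sigma<\frac{1}{2}$ with $t$-uniform bounds on positive fractional powers; but since that inclusion is used only inside the paper's own third step and nowhere else, nothing downstream is lost by your shortcut. The treatment of $\theta=1$ and of $A(t)^{*}$ coincides with the paper's.
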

\begin{proof}
First, note that $\left(H^{-\frac{1}{2}}\left(\partial\Omega\right),\mathcal{D}\left(A\left(t\right)\right)\right)_{\frac{1}{2},\infty}\subset\mathcal{D}\left(A\left(t\right)^{\sigma}\right)$, \cite[Proposition 1.1.4 and 4.1.7]{LunardiInterpolation}, for all $0<\sigma<\frac{1}{2}$, where the real interpolation space $\left(H^{-\frac{1}{2}}\left(\partial\Omega\right),\mathcal{D}\left(A\left(t\right)\right)\right)_{\frac{1}{2},\infty}$ is equal to 
\[\left\{ y\in H^{-\frac{1}{2}}\left(\partial\Omega\right);\,\text{sup}_{\rho\in\left[0,\infty\right[}\left\Vert \rho^{\frac{1}{2}}A\left(t\right)\left(\rho+A\left(t\right)\right)^{-1}y\right\Vert_{H^{-\frac{1}{2}}(\partial\Omega)} <\infty\right\}
\]
by \cite[Propositions 2.2.2 and 2.2.6]{Lunardi}. We divide the proof into steps.

First step: If $\left(A\left(t\right)-\lambda\right)y=f$, for $y\in H^{\frac{1}{2}}\left(\partial\Omega\right)$,
$f\in L^{2}\left(\partial\Omega\right)$ and $Re\left(\lambda\right)<0$,
then 
$$\left\Vert y\right\Vert _{H^{\frac{1}{2}}\left(\partial\Omega\right)}\le C\left|\lambda\right|^{-\frac{1}{2}}\left\Vert f\right\Vert _{L^{2}\left(\partial\Omega\right)}.$$

In order to prove it, let $u_{t}=\mathcal{E}(y)-\mathcal{B}_{t,D}^{-1}\left(P(t,x,D)\mathcal{E}(y)\right)$. Since $y=\gamma_{0}(u_{t})$, we have 
\[
\left\Vert y\right\Vert _{H^{\frac{1}{2}}\left(\partial\Omega\right)}^{2} \overset{(1)}{\le} C\left\Vert u_{t}\right\Vert _{H^{1}\left(\Omega\right)}^{2} \overset{(2)}{\le} C\,\text{Re}\left(u_{t},u_{t}\right) \overset{(3)}{\le} C\left\Vert f\right\Vert _{L^{2}\left(\partial\Omega\right)}\left\Vert \gamma_{0}\left(u_{t}\right)\right\Vert _{L^{2}\left(\partial\Omega\right)} \overset{(4)}{\le} C\left|\lambda\right|^{-1}\left\Vert f\right\Vert _{L^{2}\left(\partial\Omega\right)}^{2}.
\]
We have used the continuity of the trace  in $(1)$, Equation \eqref{eq:Rea(u,u)geu} in $(2)$, Equation \eqref{eq:aaa} in $(3)$ and Proposition \ref{prop:L2sectorial} in $(4)$. The constants $C>0$ can change from one inequality to another.

Second step: If $0<\sigma<\frac{1}{2}$, then $L^{2}\left(\partial\Omega\right)\subset\mathcal{D}\left(A\left(t\right)^{\sigma}\right)$
and the inclusion is continuous.

Let $y\in L^{2}\left(\partial\Omega\right)$. Then 
\begin{eqnarray}
\rho^{\frac{1}{2}}\left\Vert A\left(t\right)\left(\rho+A\left(t\right)\right)^{-1}y\right\Vert _{H^{-\frac{1}{2}}\left(\partial\Omega\right)}&\le&\left\Vert A\left(t\right)\right\Vert _{\mathcal{B}\left(H^{\frac{1}{2}}\left(\partial\Omega\right),H^{-\frac{1}{2}}\left(\partial\Omega\right)\right)}\rho^{\frac{1}{2}}\left\Vert \left(\rho+A\left(t\right)\right)^{-1}y\right\Vert _{H^{\frac{1}{2}}\left(\partial\Omega\right)}
\nonumber\\
&\le&C \left\Vert A\left(t\right)\right\Vert _{\mathcal{B}\left(H^{\frac{1}{2}}\left(\partial\Omega\right),H^{-\frac{1}{2}}\left(\partial\Omega\right)\right)}\left\Vert y\right\Vert _{L^{2}\left(\partial\Omega\right)}.
\nonumber
\end{eqnarray}
This implies that $y\in\left(H^{-\frac{1}{2}}\left(\partial\Omega\right),\mathcal{D}\left(A\left(t\right)\right)\right)_{\frac{1}{2},\infty}\subset\mathcal{D}\left(A\left(t\right)^{\sigma}\right)$
and that the inclusions are continuous. Hence $\left\Vert A\left(t\right)^{\sigma}y\right\Vert _{H^{-\frac{1}{2}}\left(\partial\Omega\right)}\le C\left\Vert y\right\Vert _{L^{2}\left(\partial\Omega\right)}$, where the constant $C$ does not depend on $t$, due to the uniform boundedness of $A(t)$ and its resolvent $\left(\rho+A(t)\right)^{-1}$, for $\rho>0$.

Third step: If $\theta\in\left]\frac{1}{2},1\right]$, then $A\left(t\right)^{-\theta}:L^{2}\left(\partial\Omega\right)\to H^{\frac{1}{2}}\left(\partial\Omega\right)$
continuously.

Let $y\in H^{-\frac{1}{2}}\left(\partial\Omega\right)$ and $x\in H^{\frac{1}{2}}\left(\partial\Omega\right)$.
Then
\begin{eqnarray}
& &\left|\left\langle y,A\left(t\right)^{-\theta}x\right\rangle _{H^{-\frac{1}{2}}\left(\partial\Omega\right)\times H^{\frac{1}{2}}\left(\partial\Omega\right)}\right| = \left|\left\langle y,A\left(t\right)^{-1}A\left(t\right)^{1-\theta}x\right\rangle _{H^{-\frac{1}{2}}\left(\partial\Omega\right)\times H^{\frac{1}{2}}\left(\partial\Omega\right)}\right|
\nonumber\\
&=& \left|\overline{\left\langle A\left(t\right)^{1-\theta}x,\left(A\left(t\right)^{-1}\right)^{*}y\right\rangle _{H^{-\frac{1}{2}}\left(\partial\Omega\right)\times H^{\frac{1}{2}}\left(\partial\Omega\right)}}\right| \le \left\Vert A\left(t\right)^{1-\theta}x\right\Vert _{H^{-\frac{1}{2}}\left(\partial\Omega\right)}\left\Vert \left(A\left(t\right)^{-1}\right)^{*}y\right\Vert _{H^{\frac{1}{2}}\left(\partial\Omega\right)}
\nonumber\\
& \le & C \left\Vert y\right\Vert _{H^{-\frac{1}{2}}\left(\partial\Omega\right)}\left\Vert x\right\Vert _{L^{2}\left(\partial\Omega\right)}.
\nonumber
\end{eqnarray}
Hence $\left\Vert A\left(t\right)^{-\theta}x\right\Vert _{H^{\frac{1}{2}}\left(\partial\Omega\right)}\le C\left\Vert x\right\Vert _{L^{2}\left(\partial\Omega\right)}$, where $C$ is again a constant that does not depend on $t$.
As $H^{\frac{1}{2}}\left(\partial\Omega\right)$ is dense in $L^{2}\left(\partial\Omega\right)$,
we obtain the result.

Finally, we see that $\left(A\left(t\right)^{*}\right)^{-\theta}:L^{2}\left(\partial\Omega\right)\to H^{\frac{1}{2}}\left(\partial\Omega\right)$
is also a continuous operator, as $A\left(t\right)^{*}$ is the operator
associated to the sesquilinear form $a_{t}^{*}:H^{1}\left(\Omega\right)\times H^{1}\left(\Omega\right)\to\mathbb{C}$
defined as $a_{t}^{*}\left(u,v\right)=\overline{a_{t}\left(v,u\right)}$,
which has the same properties of $a_{t}$.
\end{proof}
We now proceed to prove Theorem \ref{thm:Yagi}.
\begin{proof}
(of the Theorem \ref{thm:Yagi})

We have to check all conditions of Definition \ref{def:Yagiconditions}. The Item $1)$ follows from Proposition \ref{prop:L2sectorial}. It remains to prove Item $2)$.

For all $x,y\in H^{\frac{1}{2}}\left(\partial\Omega\right)$, we have 	

\begin{eqnarray}
& & \left|\left(\left.A\left(t\right)\right|_{L^{2}\left(\partial\Omega\right)}^{\nu}\left(\left.A\left(t\right)\right|_{L^{2}\left(\partial\Omega\right)}^{-1}-\left.A\left(s\right)\right|_{L^{2}\left(\partial\Omega\right)}^{-1}\right)x,y\right)_{L^{2}\left(\partial\Omega\right)}\right|
\nonumber\\	
&=&	\left|\left\langle A\left(t\right)^{\nu}\left(A\left(t\right)^{-1}-A\left(s\right)^{-1}\right)x,y\right\rangle _{H^{-\frac{1}{2}}\left(\partial\Omega\right)\times H^{\frac{1}{2}}\left(\partial\Omega\right)}\right|	
\nonumber\\
&=&	\left|\left\langle A\left(t\right)^{\nu-1}A\left(t\right)\left(A\left(t\right)^{-1}-A\left(s\right)^{-1}\right)x,y\right\rangle _{H^{-\frac{1}{2}}\left(\partial\Omega\right)\times H^{\frac{1}{2}}\left(\partial\Omega\right)}\right|
\nonumber\\	
&\overset{(1)}{=}&	\left|\left\langle A\left(t\right)\left(A\left(t\right)^{-1}-A\left(s\right)^{-1}\right)x,\left(A\left(t\right)^{*}\right)^{\nu-1}y\right\rangle _{H^{-\frac{1}{2}}\left(\partial\Omega\right)\times H^{\frac{1}{2}}\left(\partial\Omega\right)}\right|
\nonumber\\
&\overset{(2)}{\le}& C\left\Vert \left(A\left(s\right)^{-1}-A\left(t\right)^{-1}\right)x\right\Vert _{H^{\frac{1}{2}}\left(\partial\Omega\right)}\left\Vert \left(A\left(t\right)^{*}\right)^{\nu-1}y\right\Vert _{H^{\frac{1}{2}}\left(\partial\Omega\right)}
\nonumber\\
&\overset{(3)}{\le}& C\left|t-s\right|^{\alpha}\left\Vert x\right\Vert _{H^{-\frac{1}{2}}\left(\partial\Omega\right)}\left\Vert \left(A\left(t\right)^{*}\right)^{\nu-1}y\right\Vert _{H^{\frac{1}{2}}\left(\partial\Omega\right)}\overset{(4)}{\le}C\left|t-s\right|^{\alpha}\left\Vert x\right\Vert _{L^{2}\left(\partial\Omega\right)}\left\Vert y\right\Vert _{L^{2}\left(\partial\Omega\right)}.
\nonumber
\end{eqnarray}

We have used Lemma \ref{lem:lemma19} in $(1)$, Proposition \ref{cor:uniforAt} in $(2)$. In $(3)$, we use that $A\left(t\right)^{-1}=\gamma_{0}\circ\mathcal{B}_{t,N}^{-1}\circ k$ as proved in Proposition \ref{cor:uniforAt} and that $\mathcal{B}_{t,N}^{-1}$ is Hölder continuous as proved in Theorem \ref{thm:teorema11}. Finally, in $(4)$, we have used Lemma \ref{lem:lemma20}.

\end{proof}

\section{Applications}

\subsection{Non-autonomous elliptic equations with dynamic boundary conditions.}

In this section, we first consider the following problem:
\begin{equation}
\left\{
\begin{array}{rcl}
P\left(t,x,D\right)u\left(t,x\right) & = & 0,\,(t,x)\in\left]t_{0},\infty\right[\times\Omega\\
\frac{\partial u}{\partial t}\left(t,x\right) & = & -C\left(t,x,D\right)u\left(t,x\right)+f\left(t,x\right),\,(t,x)\in\left]t_{0},\infty\right[\times\partial\Omega\\
u\left(t_{0},x\right) & = & u_{0}\left(x\right),\,x\in\partial\Omega,
\end{array}\label{p890}
\right.
\end{equation}
where $\Omega\subset\mathbb{R}^{n}$ is a bounded Lipschitz domain,
$P\left(t,x,D\right)$ and $C\left(t,x,D\right)$ are the operators
defined in (\ref{eq:P(t,x,D)}) and (\ref{eq:dirichlettoneumann}) with coefficients that satisfy the conditions
of Assumption \ref{fact:Assumptionsoftheform} for the $H^{-\frac{1}{2}}(\Omega)$ problem and Assumption 2 for the $L^2(\Omega)$ problem.
\begin{thm}
\label{thm:maintheoremnonautonomouselliptic} Suppose that the Assumption 1 holds, that is, $\alpha\in]0,1]$, and let
$u_{0}\in H^{-\frac{1}{2}}\left(\partial\Omega\right)$ and $f\in C_{u}^{\alpha}\left(\left[t_{0},\infty\right[,H^{-\frac{1}{2}}\left(\partial\Omega\right)\right)$.
Then, there is a unique function $u\in C\left(\left[t_{0},\infty\right[,H^{1}\left(\Omega\right)\right)$
such that:

1) $\gamma_{0}\left(u\right)\in C^{1}\left(\left]t_{0},\infty\right[,H^{-\frac{1}{2}}\left(\partial\Omega\right)\right)\cap C\left(\left[t_{0},\infty\right[,H^{-\frac{1}{2}}\left(\partial\Omega\right)\right)\cap C\left(\left]t_{0},\infty\right[,H^{\frac{1}{2}}\left(\partial\Omega\right)\right)$.

2) 
\begin{equation}\label{solution}
\left\{
\begin{array}{rcl}
P\left(t,x,D\right)u\left(t\right) &=& 0,\forall t\in\left[t_{0},\infty\right[ \\
\frac{d\gamma_{0}(u)}{dt}\left(t\right) &=& -C\left(t,x,D\right)u\left(t\right)+f\left(t\right),\,t\in\left]t_{0},\infty\right[\\
\gamma_{0}(u)\left(t_{0}\right) &=& u_{0}
\end{array}.
\right.
\end{equation}

Moreover, if $\lim_{t\to\infty}f\left(t\right)=f_{\infty}$ in $H^{-\frac{1}{2}}\left(\partial\Omega\right)$, then $\lim_{t\to\infty}u\left(t\right)=u_{\infty}$ in $H^{1}\left(\Omega\right)$, where $u_{\infty}$ is the unique solution of 
\begin{equation}\label{eq:infty}
\left\{
\begin{array}{rcl}
P\left(\infty,x,D\right)u_{\infty} & = & 0\\
C\left(\infty,x,D\right)u_{\infty} & = & f_{\infty}
\end{array}.
\right.
\end{equation}

If the Assumption 2 holds, that is, $\alpha\in]\frac{1}{2},1]$, and if $u_{0}\in L^{2}\left(\partial\Omega\right)$ and $f\in C_{u}^{\alpha}\left(\left[t_{0},\infty\right[,L^{2}\left(\partial\Omega\right)\right)$, then there is a unique $u\in C\left(\left[t_{0},\infty\right[,H^{1}\left(\Omega\right)\right)$ such that its trace $\gamma_{0}\left(u\right)\in C^{1}\left(\left]t_{0},\infty\right[,L^{2}\left(\partial\Omega\right)\right)\cap C\left(\left[t_{0},\infty\right[,L^{2}\left(\partial\Omega\right)\right)\cap C\left(\left]t_{0},\infty\right[,H^{\frac{1}{2}}\left(\partial\Omega\right)\right)$ and \eqref{solution} holds. In particular, $C\left(t,x,D\right)u\left(t\right)$ exists in the $L^{2}(\partial\Omega)$-weak sense.

Moreover, if $\lim_{t\to\infty}f\left(t\right)=f_{\infty}$ in $L^{2}\left(\partial\Omega\right)$, then $\lim_{t\to\infty}u\left(t\right)=u_{\infty}$ in $H^{1}\left(\Omega\right)$, where $u_{\infty}$ is the unique solution of \eqref{eq:infty}. 
\end{thm}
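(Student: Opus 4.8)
The plan is to reduce Problem \eqref{p890} to the boundary problem \eqref{eq:mainproblem} already solved in Section 2, and then to lift the resulting boundary solution back to $\Omega$ by solving, at each frozen time $t$, the associated Dirichlet problem. First I would solve for the trace $g\left(t\right):=\gamma_{0}\left(u\left(t\right)\right)$. If $u\left(t\right)\in H^{1}\left(\Omega\right)$ satisfies $P\left(t,x,D\right)u\left(t\right)=0$ and $\gamma_{0}\left(u\left(t\right)\right)=g\left(t\right)$, then by Definition \ref{defn:DtN} its weak conormal derivative is $C\left(t,x,D\right)u\left(t\right)=A\left(t\right)g\left(t\right)$, so the second line of \eqref{solution} becomes exactly $\frac{dg}{dt}+A\left(t\right)g=f$ with $g\left(t_{0}\right)=u_{0}$. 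Hence under Assumption \ref{fact:Assumptionsoftheform} I invoke Corollary \ref{cor:dthhmeio}, and under Assumption \ref{fact:Assumptionsoftheform2} I invoke Theorem \ref{thm:L2}, to obtain a unique $g$ with the regularity listed in item 1.

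Next I reconstruct $u$. For $t>t_{0}$ one has $g\left(t\right)\in H^{\frac{1}{2}}\left(\partial\Omega\right)$, so Proposition \ref{lem:Dirichlet-Problem}(1) and Remark \ref{rem:juleu} allow me to define
\[
u\left(t\right):=\mathcal{E}\left(g\left(t\right)\right)-\mathcal{B}_{t,D}^{-1}P\left(t,x,D\right)\mathcal{E}\left(g\left(t\right)\right)\in H^{1}\left(\Omega\right),
\]
the unique function with $P\left(t,x,D\right)u\left(t\right)=0$ and $\gamma_{0}\left(u\left(t\right)\right)=g\left(t\right)$. The first and third lines of \eqref{solution} then hold by construction, and the identity $A\left(t\right)g\left(t\right)=C\left(t,x,D\right)u\left(t\right)$ from Definition \ref{defn:DtN} turns the boundary ODE for $g$ into the second line; in the $L^{2}$ case the membership $A\left(t\right)g\left(t\right)\in L^{2}\left(\partial\Omega\right)$ shows that $C\left(t,x,D\right)u\left(t\right)$ exists in the $L^{2}$-weak sense. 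Uniqueness of $u$ follows from uniqueness of $g$ together with uniqueness in the Dirichlet problem.

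The main obstacle is the $t$-regularity of $u$ as an $H^{1}\left(\Omega\right)$-valued map. I would estimate $\left\Vert u\left(t\right)-u\left(s\right)\right\Vert _{H^{1}\left(\Omega\right)}$ through the splitting
\[
u\left(t\right)-u\left(s\right)=\left(\mathcal{E}-\mathcal{B}_{t,D}^{-1}P\left(t,x,D\right)\mathcal{E}\right)\left(g\left(t\right)-g\left(s\right)\right)+\left(\mathcal{B}_{s,D}^{-1}P\left(s,x,D\right)-\mathcal{B}_{t,D}^{-1}P\left(t,x,D\right)\right)\mathcal{E}\left(g\left(s\right)\right),
\]
bounding the first term by the uniform boundedness of $\mathcal{B}_{t,D}^{-1}$, $P\left(t,x,D\right)$ and $\mathcal{E}$ (Remark \ref{rem:M}) against $\left\Vert g\left(t\right)-g\left(s\right)\right\Vert _{H^{\frac{1}{2}}\left(\partial\Omega\right)}$, and the second by the H\"{o}lder continuity in $t$ of $\mathcal{B}_{t,D}^{-1}P\left(t,x,D\right)\in\mathcal{B}\left(H^{1}\left(\Omega\right)\right)$ (proved from Assumption \ref{fact:Assumptionsoftheform} and the resolvent identity exactly as for $\mathcal{B}_{t,N}$ in Theorem \ref{thm:teorema11}) against $\left\Vert \mathcal{E}\left(g\left(s\right)\right)\right\Vert _{H^{1}\left(\Omega\right)}$. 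Since $g\in C\left(\left]t_{0},\infty\right[,H^{\frac{1}{2}}\left(\partial\Omega\right)\right)$, this yields continuity of $u$ into $H^{1}\left(\Omega\right)$ on $\left]t_{0},\infty\right[$; the endpoint $t_{0}$ is the delicate point, where only the $H^{-\frac{1}{2}}$-continuity of $g$ supplied by the abstract theorems is available and must be matched against the mapping properties of the reconstruction.

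Finally, for the asymptotics, I note that in the $L^{2}$ setting the embedding $L^{2}\left(\partial\Omega\right)\hookrightarrow H^{-\frac{1}{2}}\left(\partial\Omega\right)$ and uniqueness force $g$ to coincide with the $H^{-\frac{1}{2}}$-solution, so in both regimes Corollary \ref{cor:dthhmeio} gives $g\left(t\right)\to g_{\infty}:=A\left(\infty\right)^{-1}f_{\infty}$ in $H^{\frac{1}{2}}\left(\partial\Omega\right)$. Passing to the limit in the reconstruction formula, using this convergence together with $\mathcal{B}_{t,D}^{-1}P\left(t,x,D\right)\to\mathcal{B}_{\infty,D}^{-1}P\left(\infty,x,D\right)$ granted by item 4 of Assumption \ref{fact:Assumptionsoftheform}, gives $u\left(t\right)\to u_{\infty}:=\mathcal{E}\left(g_{\infty}\right)-\mathcal{B}_{\infty,D}^{-1}P\left(\infty,x,D\right)\mathcal{E}\left(g_{\infty}\right)$ in $H^{1}\left(\Omega\right)$, and the relation $A\left(\infty\right)g_{\infty}=f_{\infty}$ says precisely that $u_{\infty}$ solves \eqref{eq:infty}.
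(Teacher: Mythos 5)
Your proposal follows essentially the same route as the paper: reduce to the boundary evolution equation \eqref{eq:mainproblem}, solve it via Corollary \ref{cor:dthhmeio} (resp.\ Theorem \ref{thm:L2}), reconstruct $u(t)=\mathcal{E}(g(t))-\mathcal{B}_{t,D}^{-1}P(t,x,D)\mathcal{E}(g(t))$, derive uniqueness from uniqueness of the trace together with uniqueness of the Dirichlet problem, and obtain the asymptotics by passing to the limit in the reconstruction formula. Your additional splitting argument for the $H^{1}(\Omega)$-continuity of $t\mapsto u(t)$, and your observation that the endpoint $t_{0}$ is delicate because only $H^{-\frac{1}{2}}$-continuity of the trace is available there, address points the paper's proof leaves implicit; they are correct refinements rather than a different approach.
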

\begin{proof}
Suppose that $u\in C\left(\left[t_{0},\infty\right[,H^{1}\left(\Omega\right)\right)$
satisfies the conditions of item 1 and 2 of the theorem. As $P\left(t,x,D\right)u\left(t\right)=0$,
the expression $C\left(t,x,D\right)u\left(t\right)$ is equivalent
to $A\left(t\right)\left(\gamma_{0}(u)(t)\right)$, where $A\left(t\right)$
is the Dirichlet-to-Neumann operator.
Hence $\gamma_{0}(u)$ must be the solution of the
Equation (\ref{eq:mainproblem}) and it is uniquely determined by $u_{0}$ and $f$. By Proposition \ref{lem:Dirichlet-Problem}, we conclude that 
\begin{equation}\label{eq:opa}
u\left(t\right)=\mathcal{E}\left(\gamma_{0}\left(u\right)\left(t\right)\right)-\mathcal{B}_{t,D}^{-1}\left(P\left(t,x,D\right)\mathcal{E}\left(\gamma_{0}\left(u\right)\left(t\right)\right)\right).
\end{equation}
Thus $u$ is also uniquely determined by $u_{0}$ and $f$. On the other hand, if we use \eqref{eq:opa} as the definition of $u\left(t\right)$, where $\gamma_{0}\left(u\right)$
is the solution of Equation (\ref{eq:mainproblem}),
then $u$ satisfies properties 1 and 2 stated in the theorem. This proves existence.

If $\lim_{t\to\infty}f\left(t\right)=f_{\infty}$ in $H^{-\frac{1}{2}}\left(\partial\Omega\right)$,
then, by Corollary \ref{cor:dthhmeio}, $\lim_{t\to\infty}\gamma_{0}\left(u\right)\left(t\right)=\gamma_{0}\left(u\right)\left(\infty\right)$
in $H^{\frac{1}{2}}\left(\partial\Omega\right)$, where $A\left(\infty\right)\left(\gamma_{0}\left(u\right)\left(\infty\right)\right)=f_{\infty}$.
Hence $\lim_{t\to\infty}u\left(t\right)=u_{\infty}$ in $H^{1}\left(\Omega\right)$,
where \[u_{\infty}=\mathcal{E}\left(\gamma_{0}\left(u\right)\left(\infty\right)\right)-\mathcal{B}_{t,D}^{-1}\left(P\left(t,x,D\right)\mathcal{E}\left(\gamma_{0}\left(u\right)\left(\infty\right)\right)\right)\] and, therefore, it is the unique solution of Equation \eqref{eq:infty}. The results for $L^{2}\left(\partial\Omega\right)$ case follow from similar arguments and Theorem \ref{thm:L2}.
\end{proof}

\subsection{Dynamical
boundary conditions on non-cylindrical domains.}\label{sec:3.2}.

Finally, we consider the Laplace equation with dynamic
boundary conditions on a non-cylindrical domain as described in the Introduction:
\begin{equation}
\left\{
\begin{gathered}
\left(\lambda+\Delta\right)u\left(t,x\right) = 0,\,(t,x)\in D, t> t_{0}\\
\frac{\partial u}{\partial t}\left(t,x\right) = -\frac{\partial u}{\partial n}\left(t,x\right)+f\left(t,x\right),\,(t,x)\in S, t> t_{0} \\
u\left(t_{0},x\right) = u_{0}\left(x\right),\,x\in\partial\Omega_{t_{0}},
\label{eq:MainProblem2}
\end{gathered}
\right.
\end{equation}
where $\lambda<0$ and $t_{0}\ge0$.


In order to define the set $D$, we consider a bounded domain $\Omega\subset\mathbb{R}^{n}$ with
$C^{2}$-regular boundary $\partial\Omega$ and a map
\begin{equation}\label{eq:proph}
h \in C_{u}^{\alpha}\left(\left[0,\infty\right[,\text{Diff}^{2}\left(\Omega\right)\right)\cap C^{1,\alpha}_{u}\left(\left[0,\infty\right[, C \left(\overline{\Omega},\mathbb{R}^{n}\right)\right)
\end{equation}
where $\text{Diff}^{2}\left(\Omega\right)$ is an open set of $C^{2}\left(\overline{\Omega},\mathbb{R}^{n}\right)$, defined as 
\[
\text{Diff}^{2}\left(\Omega\right):=\left\{ g\in C^{2}\left(\overline{\Omega},\mathbb{R}^{n}\right);\,g\,\text{is injective and }\det\left(\frac{\partial g_{i}}{\partial x_{j}}\left(y\right)\right)\ne0,\,\forall y\in\overline{\Omega}\right\} .
\]
The set $D\subset\mathbb{R}^{n+1}$ is defined as the image of the function
$\mathcal{H}:\left[0,\infty\right[\times\Omega\to\left[0,\infty\right[\times\mathbb{R}^{n}$,
 given by $\mathcal{H}\left(t,y\right)=\left(t,h\left(t,y\right)\right).$ According to (\ref{eq:proph}), $D$ is an open set of $\mathbb{R}^{n+1}$ since $\mathcal{H}$ is 
a diffeomorphism onto its image. 
We notice that $h$ defines a family of diffeomorphisms $\left\{h_t:\Omega\to\Omega_{t}\right\}_{t\ge 0}$ given as $h_{t}(y)=h(t,y)$ and that
$$
\Omega_{t}=\left\{ \left(t,h_{t}\left(y\right)\right);\,y\in\Omega\right\}, \quad
\partial\Omega_{t}=\left\{ \left(t,h_{t}\left(y\right)\right);\,y\in\partial\Omega\right\}, \; \textrm{ for } t \ge 0, 
$$
and $S=h\left(\left]0,\infty\right[\times\partial\Omega\right)$.

\begin{fact}
\label{fact:Assumptionsofh}The function $h$ satisfies: 

i) $\lim_{t\to\infty}h\left(t,.\right)=I$
in $C^{2}\left(\overline{\Omega},\mathbb{R}^{n}\right)$ where $I:\Omega\to\Omega$
is the identity.


ii) There is a function $c\in C_{u}^{\alpha}\left(\left[0,\infty\right[\right)$, where $\alpha\in\left]\frac{1}{2},1\right]$ such that $\frac{\partial h}{\partial t}\left(t,y\right)=c\left(t\right)n\left(t,h\left(t,y\right)\right)$,
for all $y\in\partial\Omega$, where $n\left(t,h\left(t,y\right)\right)$
is the outward normal vector to $\Omega_{t}$ at the point $h\left(t,y\right)\in\partial\Omega_{t}$. 
\end{fact}  
The item i) gives a precise meaning to the convergence of the sets
$\Omega_{t}$ to $\Omega$ as $t\to\infty$. Intuitively it also says
that $\Omega_{t}$ are temporal perturbations of the set $\Omega$.
The Hölder continuity of $h$ and $\frac{dh}{dt}$ assumed in (\ref{eq:proph}) implies that
the perturbations and its rate of variation do not change rapidly.
The second item of Assumption \ref{fact:Assumptionsofh} says that we allow only small perturbations
of the domain along the normal vector.

\begin{example}
Let $\Omega$ be a bounded domain with $C^{3}$-regular boundary. Let $\nu:\partial\Omega\to\mathbb{R}^{n}$
be the outward normal vector field and $B_{r}^{N}\left(y\right):=\left\{ y+t\nu\left(y\right),\,\left|t\right|\le r\right\} $, for $y\in\partial\Omega$. By the collar neighborhood theorem, there is an $r>0$ such
that $U:=\cup_{y\in\partial\Omega}B_{r}^{N}\left(y\right)$ is an
open set and $B_{r}^{N}\left(w\right)\cap B_{r}^{N}\left(y\right)=\emptyset$,
if $w\ne y$, $w,y\in\partial\Omega$. Moreover there is a unique
function $\pi:U\to\partial\Omega$ of class $C^{2}$ such that $\pi\left(z\right)=y$,
when $z\in B_{r}^{N}\left(y\right)$. 
Let $\chi\in C_{c}^{\infty}\left(U\right)$ be equal to 1 in a neighborhood
of $\partial\Omega$. Now take $f\in C^{2}\left(\left[0,\infty\right[\right)$
and define $h:\left[0,\infty\right[\times\Omega\to\mathbb{R}^{n}$
as 
\[
h\left(t,y\right)=y+f\left(t\right)\chi\left(y\right)\nu\left(\pi\left(y\right)\right).
\]
If $\left\Vert f\right\Vert _{L^{\infty}\left(\left[0,\infty\right[\right)}$
is small, it is clear that the above formula defines a $C^{2}$ diffeomorphism,
for all $t\in\left[0,\infty\right[$. Moreover, take $y\in\partial\Omega$
and let $\phi:B_{1}\left(0\right)=\left\{w\in\mathbb{R}^{n-1};\|w\|<1\right\}\subset\mathbb{R}^{n-1}\to\partial\Omega$
be an embedding of class $C^{3}$ such that $\phi\left(0\right)=y$.
Hence $x\in B_{1}\left(0\right)\mapsto h\left(t,\phi\left(x\right)\right)\in\partial\Omega_{t}$
is an embedding of class $C^{2}$. The tangent space $T_{h\left(t,y\right)}\partial\Omega_{t}$
consists of the linear span of the vectors $\left.\frac{\partial}{\partial x_{j}}h\left(t,\phi\left(x\right)\right)\right|_{x=0}$,
$j=1,...,n-1$. Denoting by $\left\langle a,b\right\rangle_{\mathbb{R}^{n}}$ the usual scalar product of vectors $a$ and $b$ in $\mathbb{R}^{n}$, we have

\begin{eqnarray}
\left\langle \frac{\partial}{\partial x_{j}}\left(h\left(t,\phi\left(x\right)\right)\right),\nu\left(\phi\left(x\right)\right)\right\rangle_{\mathbb{R}^{n}} &=&\left\langle \frac{\partial\phi}{\partial x_{j}}\left(x\right),\nu\left(\phi\left(x\right)\right)\right\rangle_{\mathbb{R}^{n}} +f\left(t\right)\left\langle \frac{\partial}{\partial x_{j}}\nu\left(\phi\left(x\right)\right),\nu\left(\phi\left(x\right)\right)\right\rangle_{\mathbb{R}^{n}}
\nonumber\\
&=&
\left\langle \frac{\partial\phi}{\partial x_{j}}\left(x\right),\nu\left(\phi\left(x\right)\right)\right\rangle_{\mathbb{R}^{n}} +f\left(t\right)\frac{1}{2}\frac{\partial}{\partial x_{j}}\left\langle \nu\left(\phi\left(x\right)\right),\nu\left(\phi\left(x\right)\right)\right\rangle_{\mathbb{R}^{n}} \nonumber\\
&=&0.
\nonumber
\end{eqnarray}
Hence $\nu\left(y\right)=\nu\left(\phi\left(0\right)\right)$
is the normal vector at $h\left(t,y\right)$, that is, $n\left(t,h\left(t,y\right)\right)=\nu\left(y\right)$.
In particular, $\frac{\partial h}{\partial t}\left(t,y\right)=\frac{df}{dt}\left(t\right)n\left(t,h\left(t,y\right)\right)$, when $y\in\partial\Omega$.
Hence, all items of {Assumption 3} hold if $\|f\|_{L^{\infty}}$ is sufficiently
small,  if the first and second derivatives of $f$ are bounded and if $\lim_{t\to\infty}f\left(t\right)=0$.
In particular, we can take $f$ behaving as $t^{-\beta} \sin t^\alpha$, for $t$ large enough, and positive constants $\alpha$ and $\beta$ satisfying $2(\alpha-1)<\beta$.
Consequently, our assumptions also allow a kind of oscillatory behavior to the boundary $S$ of the non-cylindrical domain $D$ at infinite time. 
\end{example}

\begin{rem}\label{rem:15}
{Assumption 3} implies the following convergences, for all $i,j,k\in\left\{ 1,...,n\right\}$:
$$
\lim_{t\to\infty}\left\Vert \frac{\partial h_{i}}{\partial y_{j}}\left(t,.\right)-\delta_{ij}\right\Vert _{L^{\infty}\left(\Omega\right)}=\lim_{t\to\infty}\left\Vert \frac{\partial h_{i}}{\partial y_{j}\partial y_{k}}\left(t,.\right)\right\Vert _{L^{\infty}\left(\Omega\right)}=\lim_{t\to\infty}\left\Vert \frac{\partial h}{\partial t}\left(t,.\right)\right\Vert _{L^{\infty}\left(\Omega\right)^{n}}=0.
$$
The first two limits follow directly from to item $i)$ of Assumption 3.
For the third one, we consider $\left(t,y\right)$ such that $\frac{\partial h_{i}}{\partial t}\left(t,y\right)>0$,
for some $i\in\left\{ 1,...,n\right\} $, and $k:=\left(\frac{1}{2C}\frac{\partial h_{i}}{\partial t}\left(t,y\right)\right)^{\frac{1}{\alpha}}$,
where $C>0$ is the constant of Hölder continuity expressed in (\ref{eq:proph}). Then
\[
\left\Vert \frac{\partial h_{i}}{\partial t}\left(t+\theta k,.\right)-\frac{\partial h_{i}}{\partial t}\left(t,.\right)\right\Vert _{L^{\infty}\left(\Omega\right)}\le C\left(\theta k\right)^{\alpha}\le\frac{1}{2}\frac{\partial h_{i}}{\partial t}\left(t,y\right).
\]
Therefore, for all $\theta\in\left[0,1\right]$, we have $\frac{\partial h_{i}}{\partial t}\left(t+\theta k,y\right)\ge\frac{1}{2}\frac{\partial h_{i}}{\partial t}\left(t,y\right)$.
We then conclude that
\[
\frac{1}{2}\left(\frac{1}{2C}\right)^{\frac{1}{\alpha}}\left|\frac{\partial h_{i}}{\partial t}\left(t,y\right)\right|^{\frac{1+\alpha}{\alpha}}\le k\left|\int_{0}^{1}\frac{\partial h_{i}}{\partial t}\left(t+\theta k,y\right)d\theta\right|\le\left|h_{i}\left(t+k,y\right)-h_{i}\left(t,y\right)\right|.
\]
The above estimate holds also if $\frac{\partial h_{i}}{\partial t}\left(t,y\right)<0$
by same arguments. As $\lim_{t\to\infty}\left\Vert h\left(t,.\right)-I\right\Vert _{L^{\infty}\left(\Omega\right)^{n}}=0$,
we conclude that $\lim_{t\to\infty}\left\Vert h_{i}\left(t+k,.\right)-h_{i}\left(t,.\right)\right\Vert _{L^{\infty}\left(\Omega\right)}=0$ and that $\left\Vert \frac{\partial h}{\partial t}\left(t,.\right)\right\Vert _{L^{\infty}\left(\Omega\right)^{n}}$
converges to zero. In particular, $\lim_{t\to\infty} c\left(t\right)=0$.
\end{rem}
In order to understand the Problem (\ref{eq:MainProblem2}), let us consider a function $u:\left\{ \left(t,x\right)\in D;\,t>t_{0}\right\}\to\mathbb{C}$ that can be extended to a continuous
function in $\left\{ \left(t,x\right)\in\overline{D};\,t\ge t_{0}\right\}$ and such that $\frac{\partial u}{\partial t}$,
$\frac{\partial u}{\partial x_{j}}$ and $\frac{\partial^{2}u}{\partial x_{i}\partial x_{j}}$
exist and are continuous up to $\left\{ \left(t,x\right)\in\overline{D};\,t>t_{0}\right\} $, for all $i, j$.
Suppose that $u$ is a classical solution of the Problem (\ref{eq:MainProblem2}).
It is worth to mention that, in a point $\left(t,x\right) \in \left]t_{0},\infty\right[\times S$, $\frac{\partial u}{\partial t}\left(t,x\right)$ must be interpreted
as the continuous extension of $\frac{\partial u}{\partial t}$ to
this point. In fact, the limit $\lim_{h\to0}\frac{u\left(t+h,x\right)-u\left(t,x\right)}{h}$
does not always make sense, as it is not even clear that $\left(t+h,x\right)\in \overline{D}$
for some $h\ne0$, when $\left(t,x\right)\in S$.

For such a function, we can make a change of variables as in \cite[Chapter 2]{Henryboundary}. We define $v\left(t,y\right):=u\left(t,h\left(t,y\right)\right)$, and consider the matrix $\left(\frac{\partial h}{\partial y}\right)$, whose entries are $\left(\frac{\partial h}{\partial y}\right)_{kj}\left(t,y\right):=\frac{\partial h_{k}}{\partial y_{j}}\left(t,y\right)$, and $\left(\frac{\partial h}{\partial y}\right)^{-1}$ is the inverse of this matrix. Moreover we denote by $\nu\left(y\right)$ the normal vector at $y\in\partial\Omega$ and by $n\left(t,x\right)$ the normal vector at $x\in \partial \Omega_{t}$. We then have

\begin{equation}
\begin{array}{lcl}
i) \frac{\partial u}{\partial x_{j}}\left(t,h\left(t,y\right)\right)&=&\sum_{k=1}^{n}\left(\frac{\partial h}{\partial y}\right)_{kj}^{-1}\left(t,y\right)\frac{\partial v}{\partial y_{k}}\left(t,y\right).
\\
ii)
\frac{\partial u}{\partial t}\left(t,h\left(t,y\right)\right)&=&\frac{\partial v}{\partial t}\left(t,y\right)-\sum_{l=1}^{n}\left(\sum_{k=1}^{n}\left(\frac{\partial h}{\partial y}\right)_{lk}^{-1}\left(t,y\right)\frac{\partial h_{k}}{\partial t}\left(t,y\right)\right)\frac{\partial v}{\partial y_{l}}\left(t,y\right).
\\
iii) 
\frac{\partial u}{\partial n}\left(t,h\left(t,y\right)\right)&=&\sum_{k=1}^{n}\left(\sum_{j=1}^{n}\left(\frac{\partial h}{\partial y}\right)_{kj}^{-1}\left(t,y\right)n_{j}\left(t,h\left(t,y\right)\right)\right)\frac{\partial v}{\partial y_{k}}\left(t,y\right).
\\
iv)\,\,\,\,\,\,\,\
\nu_{j}\left(y\right)&=&\frac{\sum_{k=1}^{n}\left(\frac{\partial h}{\partial y}\right)_{kj}\left(t,y\right)n_{k}\left(t,h\left(t,y\right)\right)}{\sqrt{\sum_{j=1}^{n}\left(\sum_{k=1}^{n}\left(\frac{\partial h}{\partial y}\right)_{kj}\left(t,y\right)n_{k}\left(t,h\left(t,y\right)\right)\right)^{2}}}.\label{eq:nunormal}
\end{array}
\end{equation}

Using item ii) of Assumption \ref{fact:Assumptionsofh},
we conclude that the Equation (\ref{eq:MainProblem2})
is formally equivalent to the following non-autonomous
elliptic equation with dynamic boundary conditions: 
\begin{equation} \label{eq:equivalentequation}
\begin{array}{rcl}
0 & = & \lambda v+\sum_{j=1}^{n}\sum_{k=1}^{n}\sum_{l=1}^{n}\left(\frac{\partial h}{\partial y}\right)_{lj}^{-1}\left(t,y\right)\frac{\partial}{\partial y_{l}}\left(\left(\frac{\partial h}{\partial y}\right)_{kj}^{-1}\left(t,y\right)\frac{\partial v}{\partial y_{k}}\left(t,y\right)\right),\,\left]t_{0},\infty\right[\times\Omega
\\
\frac{\partial v}{\partial t}\left(t,y\right) & = & \sum_{l=1}^{n}\left(\sum_{k=1}^{n}\left(\frac{\partial h}{\partial y}\right)_{lk}^{-1}\left(t,y\right)\left(c\left(t\right)-1\right)n_{k}\left(t,h\left(t,y\right)\right)\right)\frac{\partial v}{\partial y_{l}}\left(t,y\right)
\\
&&+f\left(t,h\left(t,y\right)\right),\,\left]t_{0},\infty\right[\times\partial\Omega
\\
v\left(t_{0 },y\right) & = & u_{0}\left(h\left(t_{0},y\right)\right),\,y\in\partial\Omega.
\end{array}
\end{equation}
The above equation can be studied using suitable forms. To define them, we fix a $C^{1}$ extension of $\nu:\partial\Omega\to\mathbb{R}^{n}$ to $\overline{\Omega}$ and call it, with a slight abuse of notation, by the same letter $\nu:\overline{\Omega}\to\mathbb{R}^{n}$. The normal vector $n$ can also be extended by the expression below:
\begin{equation}\label{eq:Normal}
n_{j}\left(t,h\left(t,y\right)\right)=\frac{\sum_{k=1}^{n}\left(\frac{\partial h}{\partial y}\right)_{kj}^{-1}\left(t,y\right)\nu_{k}\left(y\right)}{\sqrt{\sum_{j=1}^{n}\left(\sum_{k=1}^{n}\left(\frac{\partial h}{\partial y}\right)_{kj}^{-1}\left(t,y\right)\nu_{k}\left(y\right)\right)^{2}}}.
\end{equation}
Again we have used the same letter to denote its extension, $n=\left(n_{1},...,n_{n}\right):\overline{D}\to\mathbb{R}^{n}$.

Let $U\subset\mathbb{R}^{n}$ be an open set that contains $\partial\Omega$ and such that $\nu(y)\ne 0$ for all $y\in\overline{\Omega}\cap U$. We fix a function $\chi\in C_{c}^{\infty}\left(U\right)$ that satisfies $\left.\chi\right|_{\partial\Omega}\equiv 1$ and define $\mathcal{N}:\left[0,\infty\right[\times\overline{\Omega}\to\mathbb{R}$ as 
\begin{equation}\label{eq:defnormal}
\mathcal{N}\left(t,y\right)=\chi\left(y\right)\left(\sum_{j=1}^{n}\left(\sum_{k=1}^{n}\left(\frac{\partial h}{\partial y}\right)_{kj}^{-1}\left(t,y\right)\nu_{k}\left(y\right)\right)^{2}\right)^{-\frac{1}{2}}+1-\chi\left(y\right).
\end{equation}

This function has the following easily verified properties:

\begin{itemize}
\item $\mathcal{N}\left(t,y\right)>0$, for all $\left(t,y\right)\in\left[0,\infty\right[\times\overline{\Omega}$.
\item $t\in\left[0,\infty\right[\mapsto\left(x\in\Omega\mapsto\mathcal{N}\left(t,x\right)\right) \hbox{ is a function that belongs to } C_{u}^{\alpha}\left(\left[0,\infty\right[,C^{1}\left(\overline{\Omega}\right)\right)$.
\item $\lim_{t\to\infty}\left\Vert \mathcal{N}\left(t,.\right)-1\right\Vert _{C^{1}\left(\overline{\Omega}\right)}=0$.
\end{itemize} 

We finally set the forms $\left\{ a_{t}:H^{1}\left(\Omega\right)\times H^{1}\left(\Omega\right)\to\mathbb{C}\right\} _{t\in\left[t_{0},\infty\right]}$. The definition is obtained by the multiplication of Equation \eqref{eq:equivalentequation} by $\mathcal{N}(t,y)(c(t)-1)$ and integration by parts. We set
\begin{equation} \label{eq:formsfinal}
\begin{array}{lcl}
a_{t}\left(v,w\right) & = & \int_{\Omega}\left[\sum_{j=1}^{n}\sum_{k=1}^{n}\sum_{l=1}^{n}\left(\left(\frac{\partial h}{\partial y}\right)_{kj}^{-1}\left(t,y\right)\frac{\partial v}{\partial y_{k}}\left(y\right)\right)\right.
\\
&&\times
\left. \frac{\partial}{\partial y_{l}}\left(\left(\frac{\partial h}{\partial y}\right)_{lj}^{-1}\left(t,y\right)\left(1-c\left(t\right)\right)\mathcal{N}\left(t,y\right)\overline{w\left(y\right)}\right) - \lambda v\left(y\right)\left(1-c\left(t\right)\right)\mathcal{N}\left(t,y\right)\overline{w\left(y\right)}\right]dy 
\\
a_{\infty}\left(v,w\right) & = & \int_{\Omega}\left(\nabla v\left(y\right).\overline{\nabla w\left(y\right)}-\lambda v\left(y\right)\overline{w\left(y\right)}\right)dy.
\end{array}
\end{equation}

Items i) and ii) of Assumption \ref{fact:Assumptionsofh} and Remark \ref{rem:15} imply that the above forms satisfy the conditions of Assumption 2
for all $t\ge t_{0}$, if $t_{0} > 0$ is big enough. In particular, we can assume $c\left(t\right) < 1$ taking $t_{0}$ sufficiently large. 

The conormal derivative associated to the form $a_{\infty}$ of Equation (\ref{eq:formsfinal}) is the normal derivative $\frac{\partial }{\partial \nu}$. For $t_{0}<t<\infty$, we first note that Equations \eqref{eq:Normal} and \eqref{eq:defnormal} imply at $y\in\partial \Omega$, that
\[ \sqrt{\sum_{j=1}^{n}\left(\sum_{k=1}^{n}\left(\frac{\partial h}{\partial y}\right)_{kj}\left(t,y\right)n_{k}\left(t,h\left(t,y\right)\right)\right)^{2}}=\left(\sum_{j=1}^{n}\left(\sum_{k=1}^{n}\left(\frac{\partial h}{\partial y}\right)_{kj}^{-1}\left(t,y\right)\nu_{k}\left(y\right)\right)^{2}\right)^{-\frac{1}{2}}=\mathcal{N}\left(t,y\right).
\]

Comparing the forms \eqref{eq:formsfinal} and \eqref{eq:forms}, we see that, in this case, the coefficients $c_j$ are equal to zero and
\[
a_{kl}=\sum_{j=1}^{n}\left(\left(\frac{\partial h}{\partial y}\right)_{kj}^{-1}\left(t,y\right)\left(y\right)\right) \left(\left(\frac{\partial h}{\partial y}\right)_{lj}^{-1}\left(t,y\right)\left(1-c\left(t\right)\right)\mathcal{N}\left(t,y\right)\right).
\]

Using \eqref{eq:defnormal} and \eqref{eq:nunormal}. iv), we see that the conormal derivative is equal to
\begin{equation}
\begin{array}{rcl}\label{eq:cofinal}
&&\tilde{C}(t,y,D)v
\\
&=& \sum_{j=1}^{n}\sum_{k=1}^{n}\sum_{l=1}^{n}\left(\left(\frac{\partial h}{\partial y}\right)_{kj}^{-1}\left(t,y\right)\frac{\partial v}{\partial y_{k}}\left(y\right)\right)\left(\left(\frac{\partial h}{\partial y}\right)_{lj}^{-1}\left(t,y\right)\mathcal{N}\left(t,y\right)\left(1-c\left(t\right)\right)\nu_{l}\left(y\right)\right)
\\
&=&
\sum_{j=1}^{n}\sum_{k=1}^{n}\left(\frac{\partial h}{\partial y}\right)_{kj}^{-1}\left(t,y\right)\left(1-c\left(t\right)\right)n_{j}\left(t,h\left(t,y\right)\right)\frac{\partial v}{\partial y_{k}}\left(y\right),
 \quad y \in \partial \Omega.
\end{array}
\end{equation}

We conclude that the Dirichlet-to-Neumann operator associated with the forms (\ref{eq:formsfinal}) are the operators that take $g\in H^{\frac{1}{2}}\left(\partial\Omega\right)$ to the $H^{-\frac{1}{2}}\left(\partial\Omega\right)$-weak conormal derivative $\tilde{C}\left(t,x,D\right)u_{t}$, defined in (\ref{eq:cofinal}), where $u_{t}\in H^{1}\left(\Omega\right)$ is the unique solution of the Dirichlet problem:
\begin{equation}
\tilde{P}(t,y,D)u_{t}=\lambda u_{t}\left(y\right)+\sum_{j=1}^{n}\sum_{k=1}^{n}\sum_{l=1}^{n}\left(\frac{\partial h}{\partial y}\right)_{lj}^{-1}\left(t,y\right)\frac{\partial}{\partial y_{l}}\left(\left(\frac{\partial h}{\partial y}\right)_{kj}^{-1}\left(t,y\right)\frac{\partial u_{t}}{\partial y_{k}}\left(y\right)\right)  =  0, \,\,\,\, \gamma_{0}(u_{t})=g.
\end{equation}

The above discussion together with Theorem \ref{thm:maintheoremnonautonomouselliptic}
implies:
\begin{thm}
\label{thm:Resultadononcylindrical} Let $u_{0}\in L^{2}\left(\partial\Omega_{t_{0}}\right)$ and $f:\overline{S}\to\mathbb{C}$ be such that the map defined by $t\in\left[t_{0},\infty\right[\mapsto\left(y\in\partial\Omega\mapsto f\left(t,h\left(t,y\right)\right)\right)$ belongs to $C_{u}^{\alpha}\left(\left[t_{0},\infty\right[,L^{2}\left(\partial\Omega\right)\right)$, where, as always in this section, $\alpha\in\left]\frac{1}{2},1\right]$. Then, there exists a unique function $u:D\to\mathbb{C}$ such that the function $v$ defined by $v(t,y)=u(t,h(t,y))$ belongs to $C\left(\left[t_{0},\infty\right[,H^{1}\left(\Omega\right)\right)$ and satisfies:

1) $\gamma_{0}\left(v\right)\in C^{1}\left(\left]t_{0},\infty\right[,L^{2}\left(\partial\Omega\right)\right)\cap C\left(\left[t_{0},\infty\right[,L^{2}\left(\partial\Omega\right)\right)\cap C\left(\left]t_{0},\infty\right[,H^{\frac{1}{2}}\left(\partial\Omega\right)\right)$,

2) 
\begin{equation}
\begin{array}{rcl}
\tilde{P}\left(t,y,D\right)v\left(t\right)&=&0\\
\frac{d}{dt}\gamma_{0}(v)\left(t\right)&=&-\tilde{C}\left(t,y,D\right)v\left(t\right)+\tilde{f}\left(t\right),\,t\in\left]t_{0},\infty\right[\\
\gamma_{0}\left(v\right)\left(t_{0}\right)&=&v_{0}
\end{array}
\nonumber
\end{equation}
with $\tilde{f}\left(t,y\right)=f\left(t,h\left(t,y\right)\right)$ and $v_{0}\left(y\right)=u_{0}\left(h\left(t_{0},y\right)\right)$.

Moreover, if $\lim_{t\to\infty}\tilde{f}\left(t\right)=f_{\infty}$ in $L^{2}\left(\partial\Omega\right)$, then $\lim_{t\to\infty}v\left(t\right)=u_{\infty}$ in $H^{1}\left(\Omega\right)$, where $u_{\infty}$ is the unique solution of 
\begin{equation}
\left\{
\begin{gathered}
\left(\lambda+\Delta\right)u_{\infty}\left(x\right)  =  0, \quad x\in\Omega\\
\frac{\partial u_{\infty}}{\partial n}\left(x\right)  =  f_{\infty}\left(x\right),\quad x\in\partial\Omega
\end{gathered}
\right. .
\nonumber
\end{equation}
\end{thm}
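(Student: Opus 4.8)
The plan is to read this theorem as a pure translation of Theorem \ref{thm:maintheoremnonautonomouselliptic} through the change of variables $v(t,y)=u(t,h(t,y))$ that flattens the non-cylindrical domain $D$ onto the fixed cylinder $]t_{0},\infty[\times\Omega$. First I would record that, for each fixed $t$, the map $h_{t}\colon\Omega\to\Omega_{t}$ is a diffeomorphism, so $u\mapsto v$ is a bijection between functions on $D$ and functions on the cylinder; existence and uniqueness of $u$ is therefore equivalent to existence and uniqueness of $v$, and it suffices to produce a unique $v$ solving the transformed system \eqref{eq:equivalentequation}. By the very construction carried out before the statement, the operators $\tilde{P}(t,y,D)$ and $\tilde{C}(t,y,D)$ occurring in item 2 are exactly the operators of the form \eqref{eq:P(t,x,D)} and the weak conormal derivative \eqref{eq:dirichlettoneumann}, computed in \eqref{eq:cofinal}, that are associated with the forms \eqref{eq:formsfinal}. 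Consequently the transformed system is a particular instance of Problem \eqref{solution}.

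Next I would verify the hypotheses of Theorem \ref{thm:maintheoremnonautonomouselliptic} in the $L^{2}$ regime. The fact that the forms \eqref{eq:formsfinal} meet Assumption 2 was already established in the discussion preceding the statement, using items i) and ii) of Assumption \ref{fact:Assumptionsofh}, the limits of Remark \ref{rem:15}, and the choice of $t_{0}$ large enough that $c(t)<1$. It then remains only to transport the data. The hypothesis is precisely that $\tilde{f}(t)=f(t,h(t,\cdot))$ belongs to $C_{u}^{\alpha}([t_{0},\infty[,L^{2}(\partial\Omega))$ with $\alpha\in\,]\tfrac{1}{2},1]$; and since $h_{t_{0}}\colon\partial\Omega\to\partial\Omega_{t_{0}}$ is a $C^{2}$ diffeomorphism with bounded, nonvanishing Jacobian, composition with $h_{t_{0}}$ sends $u_{0}\in L^{2}(\partial\Omega_{t_{0}})$ to $v_{0}=u_{0}\circ h_{t_{0}}\in L^{2}(\partial\Omega)$.

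With the hypotheses in place I would invoke the $L^{2}$ part of Theorem \ref{thm:maintheoremnonautonomouselliptic} to obtain a unique $v\in C([t_{0},\infty[,H^{1}(\Omega))$ whose trace has the regularity of item 1 and which solves \eqref{solution}; this is exactly item 2. The function on $D$ is recovered by setting $u(t,x)=v(t,h_{t}^{-1}(x))$ for $(t,x)\in D$, which is well defined because $h_{t}$ maps $\Omega$ bijectively onto $\Omega_{t}$. For the asymptotics I would apply the convergence conclusion of Theorem \ref{thm:maintheoremnonautonomouselliptic}: if $\tilde{f}(t)\to f_{\infty}$ in $L^{2}(\partial\Omega)$, then $v(t)\to u_{\infty}$ in $H^{1}(\Omega)$, where $u_{\infty}$ solves \eqref{eq:infty} for the limiting operators. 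Since $a_{\infty}$ in \eqref{eq:formsfinal} is the standard Dirichlet form of $\lambda+\Delta$, whose weak conormal derivative is the ordinary normal derivative $\tfrac{\partial}{\partial\nu}$, Problem \eqref{eq:infty} collapses to the stationary Neumann problem displayed in the statement.

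The honest assessment is that there is no single hard analytic step inside this proof: the genuine difficulty has been front-loaded into the verification that the coefficients built from $\left(\frac{\partial h}{\partial y}\right)^{-1}$ and $\mathcal{N}$ are uniformly coercive and bounded, $\alpha$-H\"older in time, and convergent at infinity, which was secured by Remark \ref{rem:15}. The remaining work is bookkeeping, and the step that most warrants care is the identification of the transformed system \eqref{eq:equivalentequation} with Problem \eqref{solution} together with the matching of the limiting form $a_{\infty}$ to the Laplacian, so that the abstract stationary problem \eqref{eq:infty} genuinely becomes the normal-derivative problem claimed for the original non-cylindrical geometry.
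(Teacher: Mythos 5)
Your proposal is correct and follows essentially the same route as the paper: the paper's entire proof consists of the change-of-variables discussion preceding the statement (construction of the forms \eqref{eq:formsfinal}, verification of Assumption 2, identification of $\tilde{P}$ and $\tilde{C}$ with the operators attached to those forms) followed by a direct appeal to the $L^{2}$ part of Theorem \ref{thm:maintheoremnonautonomouselliptic}, which is exactly your argument. Your added remarks on transporting $u_{0}$ through $h_{t_{0}}$ and on recovering $u$ from $v$ via $h_{t}^{-1}$ are the same bookkeeping the paper leaves implicit.
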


It is clear, by our discussion, that $\tilde{P}\left(t,y,D\right)v\left(t\right)=0$ is equivalent to $\left(\lambda+\Delta\right)u\left(t\right)=0$, and \[\frac{d}{dt}\gamma_{0}(v)\left(t\right)=-\tilde{C}\left(t,y,D\right)v\left(t\right)+\tilde{f}\left(t\right)\] is formally equivalent to the dynamic boundary conditions of \eqref{eq:MainProblem2}, after change of variables.

\begin{bibdiv}

\begin{biblist}

\bib{AbrelJamilErica}{article}{
   author={Abreu, J.},
   author={Capelato, E.},
   title={Dirichlet-to-Neumann semigroup with respect to a general second order eigenvalue problem},
   journal={arXiv preprint arXiv:1606.03961},
   date={2016},
}

\bib{Amann}{book}{
   author={Amann, Herbert},
   title={Linear and quasilinear parabolic problems. Vol. I},
   series={Monographs in Mathematics},
   volume={89},
   note={Abstract linear theory},
   publisher={Birkh\"auser Boston, Inc., Boston, MA},
   date={1995},
   pages={xxxvi+335},
   isbn={3-7643-5114-4},
   review={\MR{1345385}},
   doi={10.1007/978-3-0348-9221-6},
}

\bib{Arendtformmethods} {book}{
   author={Arendt et al., W.},
   title={Form methods for evolution equations, and applications},
   series={Lecture Notes of the 18th Internet Seminar on Evolution Equations},
   note={https://www.mat.tuhh.de/veranstaltungen/isem18/pdf/LectureNotes.pdf},
}

\bib{Arendtelst}{article}{
   author={Arendt, W.},
   author={ter Elst, A. F. M.},
   title={Sectorial forms and degenerate differential operators},
   journal={J. Operator Theory},
   volume={67},
   date={2012},
   number={1},
   pages={33--72},
   issn={0379-4024},
   review={\MR{2881534}},
}

\bib{Arendtelstrough}{article}{
   author={Arendt, W.},
   author={ter Elst, A. F. M.},
   title={The Dirichlet-to-Neumann operator on rough domains},
   journal={J. Differential Equations},
   volume={251},
   date={2011},
   number={8},
   pages={2100--2124},
   issn={0022-0396},
   review={\MR{2823661}},
   doi={10.1016/j.jde.2011.06.017},
}

\bib{Arendtetal}{article}{
   author={Arendt, W.},
   author={ter Elst, A. F. M.},
   author={Kennedy, J. B.},
   author={Sauter, M.},
   title={The Dirichlet-to-Neumann operator via hidden compactness},
   journal={J. Funct. Anal.},
   volume={266},
   date={2014},
   number={3},
   pages={1757--1786},
   issn={0022-1236},
   review={\MR{3146835}},
}

\bib{Elstlipschitz}{article}{
   author={Behrndt, J.},
   author={ter Elst, A. F. M.},
   title={Dirichlet-to-Neumann maps on bounded Lipschitz domains},
   journal={J. Differential Equations},
   volume={259},
   date={2015},
   number={11},
   pages={5903--5926},
   issn={0022-0396},
   review={\MR{3397313}},
   doi={10.1016/j.jde.2015.07.012},
}

\bib{BG}{article}{
   author={Bonaccorsi, S.},
   author={Guatteri, G.},
   title={A variational approach to evolution problems with variable domains},
   journal={J. Differential Equations},
   volume={175},
   date={2001},
   pages={51--70},
   doi={10.1006/jdeq.2000.3959},
}

\bib{Calvo}{article}{
   author={Calvo, Juan},
   author={Novaga, Matteo},
   author={Orlandi, Giandomenico},
   title={Parabolic equations in time-dependent domains},
   journal={J. Evol. Equ.},
   volume={17},
   date={2017},
   number={2},
   pages={781--804},
   issn={1424-3199},
   review={\MR{3665229}},
   doi={10.1007/s00028-016-0336-4},
}

\bib{ElstOuhabaz}{article}{
   author={ter Elst, A. F. M.},
   author={Ouhabaz, E. M.},
   title={Convergence of the Dirichlet-to-Neumann operator on varying
   domains},
   conference={
      title={Operator semigroups meet complex analysis, harmonic analysis
      and mathematical physics},
   },
   book={
      series={Oper. Theory Adv. Appl.},
      volume={250},
      publisher={Birkh\"auser/Springer, Cham},
   },
   date={2015},
   pages={147--154},
   review={\MR{3468214}},
}

\bib{Eschernonlineardynamic}{article}{
   author={Escher, Joachim},
   title={Nonlinear elliptic systems with dynamic boundary conditions},
   journal={Math. Z.},
   volume={210},
   date={1992},
   number={3},
   pages={413--439},
   issn={0025-5874},
   review={\MR{1171181}},
   doi={10.1007/BF02571805},
}

\bib{EscherDNContinuous}{article}{
   author={Escher, Joachim},
   title={The Dirichlet-Neumann operator on continuous functions},
   journal={Ann. Scuola Norm. Sup. Pisa Cl. Sci. (4)},
   volume={21},
   date={1994},
   number={2},
   pages={235--266},
   issn={0391-173X},
   review={\MR{1288366}},
}

\bib{EscherSeiler}{article}{
   author={Escher, J.},
   author={Seiler, J.},
   title={Bounded $H_\infty$-calculus for pseudodifferential operators and
   applications to the Dirichlet-Neumann operator},
   journal={Trans. Amer. Math. Soc.},
   volume={360},
   date={2008},
   number={8},
   pages={3945--3973},
   issn={0002-9947},
   review={\MR{2395160}},
   doi={10.1090/S0002-9947-08-04589-3},
}

\bib{Friedmannoncylindrical}{article}{
   author={Friedman, Avner},
   title={Asymptotic behavior of solutions of parabolic equations of any
   order},
   journal={Acta Math.},
   volume={106},
   date={1961},
   pages={1--43},
   issn={0001-5962},
   review={\MR{0136869}},
   doi={10.1007/BF02545812},
}

\bib{FriedmanShinbrot}{article}{
   author={Friedman, Avner},
   author={Shinbrot, Marvin},
   title={The initial value problem for the linearized equations of water
   waves},
   journal={J. Math. Mech.},
   volume={17},
   date={1967},
   pages={107--180},
   review={\MR{0214932}},
}

\bib{grisvard}{book}{
   author={Grisvard, Pierre},
   title={Elliptic problems in nonsmooth domains},
   series={Classics in Applied Mathematics},
   volume={69},
   note={Reprint of the 1985 original [ MR0775683];
   With a foreword by Susanne C. Brenner},
   publisher={Society for Industrial and Applied Mathematics (SIAM),
   Philadelphia, PA},
   date={2011},
   pages={xx+410},
   isbn={978-1-611972-02-3},
   review={\MR{3396210}},
   doi={10.1137/1.9781611972030.ch1},
}

\bib{Henryboundary}{book}{
   author={Henry, Dan},
   title={Perturbation of the boundary in boundary-value problems of partial
   differential equations},
   series={London Mathematical Society Lecture Note Series},
   volume={318},
   note={With editorial assistance from Jack Hale and Ant\^onio Luiz Pereira},
   publisher={Cambridge University Press, Cambridge},
   date={2005},
   pages={viii+206},
   isbn={978-0-521-57491-4},
   isbn={0-521-57491-9},
   review={\MR{2160744}},
   doi={10.1017/CBO9780511546730},
}

\bib{Hintermann}{article}{
   author={Hintermann, Thomas},
   title={Evolution equations with dynamic boundary conditions},
   journal={Proc. Roy. Soc. Edinburgh Sect. A},
   volume={113},
   date={1989},
   number={1-2},
   pages={43--60},
   issn={0308-2105},
   review={\MR{1025453}},
   doi={10.1017/S0308210500023945},
}

\bib{Lionsnonlinear}{book}{
   author={Lions, J.-L.},
   title={Quelques m\'ethodes de r\'esolution des probl\`emes aux limites non
   lin\'eaires},
   language={French},
   publisher={Dunod; Gauthier-Villars, Paris},
   date={1969},
   pages={xx+554},
   review={\MR{0259693}},
}

\bib{Lunardi}{book}{
   author={Lunardi, Alessandra},
   title={Analytic semigroups and optimal regularity in parabolic problems},
   series={Modern Birkh\"auser Classics},
   note={[2013 reprint of the 1995 original] [MR1329547]},
   publisher={Birkh\"auser/Springer Basel AG, Basel},
   date={1995},
   pages={xviii+424},
   isbn={978-3-0348-0556-8},
   isbn={978-3-0348-0557-5},
   review={\MR{3012216}},
}

\bib{LunardiInterpolation}{book}{
   author={Lunardi, Alessandra},
   title={Interpolation theory},
   series={Appunti. Scuola Normale Superiore di Pisa (Nuova Serie). [Lecture
   Notes. Scuola Normale Superiore di Pisa (New Series)]},
   edition={2},
   publisher={Edizioni della Normale, Pisa},
   date={2009},
   pages={xiv+191},
   isbn={978-88-7642-342-0},
   isbn={88-7642-342-0},
   review={\MR{2523200}},
}

\bib{MaToFu}{article}{
   author={Ma, To Fu},
   author={Mar\'\i n-Rubio, Pedro},
   author={Surco Chu\~no, Christian Manuel},
   title={Dynamics of wave equations with moving boundary},
   journal={J. Differential Equations},
   volume={262},
   date={2017},
   number={5},
   pages={3317--3342},
   issn={0022-0396},
   review={\MR{3584894}},
   doi={10.1016/j.jde.2016.11.030},
}
	
\bib{Pazy}{book}{
   author={Pazy, A.},
   title={Semigroups of linear operators and applications to partial
   differential equations},
   series={Applied Mathematical Sciences},
   volume={44},
   publisher={Springer-Verlag, New York},
   date={1983},
   pages={viii+279},
   isbn={0-387-90845-5},
   review={\MR{710486}},
   doi={10.1007/978-1-4612-5561-1},
}


\bib{ElstOuhabaz}{article}{
   author={Ouhabaz, E. M.},
   title={A "milder" version of Calder\'on's inverse problem for anisotropic conductivities and partial data.},
   journal={arXiv preprint arXiv:1501.07364},
   date={2015},
   }

\bib{sobolevskii}{article}{
   author={Sobolevski\u\i , P. E.},
   title={Equations of parabolic type in a Banach space},
   language={Russian},
   journal={Trudy Moskov. Mat. Ob\v s\v c.},
   volume={10},
   date={1961},
   pages={297--350},
   issn={0134-8663},
   review={\MR{0141900}},
}

\bib{Tanabe}{book}{
   author={Tanabe, Hiroki},
   title={Equations of evolution},
   series={Monographs and Studies in Mathematics},
   volume={6},
   note={Translated from the Japanese by N. Mugibayashi and H. Haneda},
   publisher={Pitman (Advanced Publishing Program), Boston, Mass.-London},
   date={1979},
   pages={xii+260},
   isbn={0-273-01137-5},
   review={\MR{533824}},
}

\bib{Vazquezenzo}{article}{
   author={V\'azquez, Juan Luis},
   author={Vitillaro, Enzo},
   title={On the Laplace equation with dynamical boundary conditions of
   reactive-diffusive type},
   journal={J. Math. Anal. Appl.},
   volume={354},
   date={2009},
   number={2},
   pages={674--688},
   issn={0022-247X},
   review={\MR{2515249}},
   doi={10.1016/j.jmaa.2009.01.023},
}

\bib{Yagibook}{book}{
   author={Yagi, Atsushi},
   title={Abstract parabolic evolution equations and their applications},
   series={Springer Monographs in Mathematics},
   publisher={Springer-Verlag, Berlin},
   date={2010},
   pages={xviii+581},
   isbn={978-3-642-04630-8},
   review={\MR{2573296}},
}

\bib{Yagipaper}{article}{
   author={Yagi, Atsushi},
   title={Fractional powers of operators and evolution equations of
   parabolic type},
   journal={Proc. Japan Acad. Ser. A Math. Sci.},
   volume={64},
   date={1988},
   number={7},
   pages={227--230},
   issn={0386-2194},
   review={\MR{974079}},
}

\end{biblist}

\end{bibdiv}

\end{document}